\newcommand{\gpp}{\mathfrak{g}_P}
\newcommand{\slc}{SL(2;\mathbb{C})}
\newcommand{\om}{\omega}
\newcommand{\ti}{\times}
\newcommand{\RP}{\mathbb{R}^+}
\newcommand{\OO}{\mathcal{O}}
\newtheorem{theorem}{Theorem}[section]
\newtheorem{corollary}[theorem]{Corollary}
\newtheorem{definition}[theorem]{Definition}
\newtheorem{example}[theorem]{Example}
\newtheorem{lemma}[theorem]{Lemma}
\newtheorem{proposition}[theorem]{Proposition}
\newtheorem*{remark}{Remark}
\newcommand{\Vol}{\mathrm{Vol}}
\newcommand{\MC}{\mathcal{C}}
\newcommand{\Tr}{\mathrm{Tr}}
\newcommand{\pol}{polyhomogeneous\;}
\newcommand{\ML}{\mathcal{L}}
\newcommand{\MO}{\mathcal{O}}
\newcommand{\st}{\star}
\newcommand{\sta}{{\star_4}}
\newcommand{\we}{\wedge}
\newcommand{\YI}{X\times \mathbb{R}^+}
\newcommand{\SA}{F_A-\Phi\we\Phi}
\newcommand{\SB}{d_A\Phi}
\newcommand{\Ph}{\Phi}
\newcommand{\CS}{\mathrm{CS}}
\newcommand{\py}{\partial_y}
\newcommand{\ft}{\mathfrak{t}}
\newcommand{\fe}{\mathfrak{e}}
\newcommand{\al}{\alpha}
\newcommand{\pa}{\partial}
\newcommand{\MA}{\mathcal{A}}
\newcommand{\MB}{\mathcal{B}}
\newcommand{\ep}{\epsilon}
\newcommand{\si}{\sigma}
\newcommand{\MP}{\mathcal{P}}
\newcommand{\lam}{\lambda}
\newcommand{\mft}{\mathfrak{t}}
\begin{document}
	\title[The Expansions of the Nahm Pole Solutions to the Kapustin-Witten Equations]{The Expansions of the Nahm Pole Solutions to the Kapustin-Witten Equations}
	\author{Siqi He} 
	\address{Department of Mathematics, California Institute of Technology\\Pasadena, CA, 91106}
	\email{she@caltech.edu}
	
	\begin{abstract}
		For a 3-manifold $X$ and compact simple Lie group $G$, we study the expansions of \pol Nahm pole solutions to the Kapustin-Witten equations over $X\times (0,+\infty)$. Let $y$ be the coordinate of $(0,+\infty)$, we prove that the sub-leading terms of a \pol Nahm pole solution is smooth to the boundary when $y\to 0$ if and only if $X$ is an Einstein 3-manifold.
	\end{abstract}
	
	\maketitle
	
	\begin{section}{Introduction}
		Witten \cite{witten2011fivebranes} propose a fascinating program for interpreting the Jones polynomial of knots on a 
		$3$-manifold $X$ by counting singular solutions to the Kapustin-Witten equations and when the knot is empty, the counting might lead to new 3-manifold invariants. The singular boundary condition is called the Nahm pole
		boundary condition. We refer \cite{witten2011fivebranes, gaiotto2012knot, Mikhaylov2012, Witten2014LecturesJonesPolynomial, Haydys2015Fukaya, cherkis2015octonions} for a more detailed explanations of this program. 
		
		The main difficulty of this program comes from the singular boundary condition and there are many results towards it. The Nahm pole boundary condition is non-standard elliptic type, the Fredholm theory and regularity of the solutions are understood in \cite{MazzeoWitten2013, MazzeoWitten2017}. In \cite{taubes2018sequences}, Taubes prove a compactness theorem for a sequence of Nahm pole solutions. In this article, we will study the Nahm pole solutions from the expansion side, based on the regularity theorem proved in \cite{MazzeoWitten2013}. 
		
		Now it is time to describe the Kapustin-Witten equations we studied, which is introduced in \cite{KapustinWitten2006}. Let $G$ be a compact simple Lie group, $P$ be a principal $G$ bundle over $X\ti\RP$ where $\RP=(0,+\infty)$, and set $\gpp$ be the adjoint bundle. The Kapustin-Witten equations are the following set of equations for a connection $A$ and $\gpp$-valued 1-form $\Phi$, respectively:
		\begin{equation}
		\begin{split}
		&F_A-\Phi\we\Phi+\st d_A\Phi=0,\\
		&d_{A}^{\st}\Phi=0,
		\label{KW}
		\end{split}
		\end{equation}
		where $F_A$ is the curvature of the connection $A$, $d_A$ is the covariant derivative and $\star$ is the Hodge star operator on 4-manifold.
		
		The regularity theorem in \cite{MazzeoWitten2013} shows that any Nahm pole solution $(A,\Phi)$ to the Kapustin-Witten equations with a gauge fixing condition is \pol along the boundary. We will explicitly introduce it in Section 2. If we denote $y$ to be the coordinate of $\mathbb{R}^+$ and $x$ the local coordinate of $X$, $(A,\Phi)$ is \pol roughly means they have the following expansions:
		
		\begin{equation}
		\begin{split}
		&A\sim\omega+\sum_{i=1}^{+\infty}\sum_{p=1}^{r_i}y^i (\log y)^p a_{i,p},\\
		&\Phi\sim y^{-1}e+\sum_{i=1}^{+\infty}\sum_{p=1}^{r_i}y^i (\log y)^p b_{i,p},
		\label{poly}
		\end{split}
		\end{equation}
		where $e:TX\to\gpp$ is the vierbein form gives an endormorphism of the tangent bundle $TX$ of $X$ and adjoint bundle $\gpp$, $\om$ is the Levi-Civita connection of $X$ under the pullback of $e$. For each $i$, $r_i$ are non-negative integers and $a_{i,p},\;b_{i,p}$ are 1-forms independent of $y$ coordinate and smooth in $x$ direction. We say a function or a form is smooth to the boundary if all the "$\log y$" terms in the expansion disappear.
		
		The vierbein term $e$ depends on a choice of principle embedding $\rho:\mathfrak{su}(2)\to G$ and we write the image of $\rho$ as $\mathfrak{su}(2)_t$. Let $\Omega^1_X(\gpp)$(resp. $\Omega^0_X(\gpp)$) be the $\gpp$-valued 1-form(resp. 0-form) over $X$, the action of $\mathfrak{su}(2)_t$ will give the decomposition $\Omega^1_X(\gpp)=\oplus V_\si$(resp. $\Omega^0(\gpp)=\oplus \tau_\si$), where the $V_\si$ and $\tau_\si$ are irreducible modules and $\si$ take values in positive integers.
		
		We have the following descriptions of the \pol solutions to the Kapustin-Witten equations:
		
		\begin{theorem}
			Let $(A,\Phi)$ be a \pol Nahm pole solution to the Kapustin-Witten equations over $X\times \mathbb{R}^+$. In the temporal gauge where $A$ doesn't have $dy$ component, we write $A=\om+a$, $\Phi=\frac{e}{y}+b+\phi_ydy$. For different $\si$ as above, let $a^{\si}$, $b^{\si}$ be the projection of $a,b$ into the irreducible module $V_{\si}$ and let $\phi_y^\si$ be the projection of $\phi_y$ into the irreducible module $\tau_\si$. Suppose $a^{\si},b^{\si}$,$\phi_y^\si$ have the expansions
			\begin{equation*}
			\begin{split}
			a^{\si}\sim\sum_{i=1}^{+\infty}\sum_{p=0}^{r_i}a^{\si}_{i,p}y^{i}(\log y)^p,\;b^{\si}\sim\sum_{i=1}^{+\infty}\sum_{p=0}^{r_i}b^{\si}_{i,p}y^{i}(\log y)^p,\;\phi^\si_y\sim \sum_{i=1}^{+\infty}\sum_{p=0}^{r_i}(\phi_y^{\si})_{i,p}y^i(\log y)^p.
			\end{split}
			\end{equation*} We write $a^\si_i:=a^\si_{i,0},\;b^{\si}_i:=b^\si_{i,0},\;(\phi_y^\si)_i:=(\phi_y^\si)_{i,0}$. Then we have:
			
			(1) When $\si=1$, $a^1,b^1,\phi_y^1$ have the expansions with leading terms
			\begin{equation*}
			\begin{split}
			&a^1\sim y^2\log y a_{2,1}^1+y^2a_2^1+\MO(y^{\frac{5}{2}}),\\
			&b^1\sim y\log yb_{1,1}^1+yb_1^1+\MO(y^{\frac{5}{2}}),\\
			&\phi_y^1\sim y^2\log y(\phi_y^1)_{2,1}+y^2(\phi_y^1)_2+\MO(y^{\frac52}).
			\end{split}
			\end{equation*}
			
			When $\si>1$, $a^\si,b^\si$ and $\phi_y^\si$ have the expansions with leading terms
			\begin{equation*}
			a^{\si}\sim y^{\si+1}a^{\si}_{\si+1}+\MO(y^{\si+\frac32}),\;b^{\si}\sim y^{\si}b^{\si}_{\si}+\MO(y^{\si+\frac 12}),\;\phi_y^\si\sim y^{\si+1}(\phi_y^{\si})_{\si+1}+\MO(y^{\si+\frac32}).
			\end{equation*}
			
			(2) The expansions of $a,b$ are determined by the coefficients $a^\si_{\si+1},\;b^{\si}_\si,\;(\phi_y^\si)_{\si+1}$. To be explicit, let $(\hat A,\hat \Phi)$ be another solution with the expansion coefficients $\hat a^{\si}_{k,p},\;\hat b^{\si}_{k,p},\;({\hat{\phi_y^\si}})_{k,p}$. If for any $\si$, $a^{\si}_{\si+1}=\hat a^{\si}_{\si+1}$, $ b^{\si}_{\si}=\hat b^{\si}_{\si}$ and $ {(\phi_y^{\si})}_{\si+1}=({\hat{\phi_y^\si}})_{\si+1}$ then  $(A,\Phi)$ and $(\hat{A},\hat{\Phi})$ have the same expansions.
		\end{theorem}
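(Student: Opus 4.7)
The plan is to substitute the polyhomogeneous expansion \eqref{poly} into \eqref{KW} in temporal gauge, separate the linearization at the Nahm pole model $(\om,e/y)$ from the nonlinear remainder, and analyze the resulting Frobenius-type recursion on each $\mathfrak{su}(2)_t$-isotypic summand. The singular contribution $e/y$ enters the linearization only through the operator $y^{-1}\mathrm{ad}(e)$ composed with $\we e$ or $\iota_e$, so the leading-order linearized operator commutes with the $\mathfrak{su}(2)_t$-action and becomes block-diagonal on the decompositions $\Omega^1_X(\gpp)=\oplus V_\si$ and $\Omega^0_X(\gpp)=\oplus\tau_\si$; on each such block it reduces to a small first-order ODE system in $y$ whose indicial polynomial can be read off from the eigenvalues of $\mathrm{ad}(e)$ on that irreducible representation.

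Computing these indicial polynomials explicitly, I expect to find on the $b^\si$-block the admissible positive root $\si$, and on the $a^\si$- and $\phi_y^\si$-blocks the admissible positive root $\si+1$; the other root is negative and ruled out by boundedness as $y\to 0$. For $\si>1$ these admissible roots are non-resonant with the other indicial data, so the recursion produces no $\log y$ term at leading order and the free parameters are exactly $b^\si_\si$, $a^\si_{\si+1}$, $(\phi_y^\si)_{\si+1}$. For $\si=1$ the admissible and inadmissible roots differ by a positive integer, producing a true resonance in the Frobenius recursion: the obstruction to solving without log terms is generically nonzero, which forces the insertion of $y\log y$ in $b^1$ and $y^2\log y$ in $a^1$ and $\phi_y^1$, with $b^1_{1,1}$, $a^1_{2,1}$, $(\phi_y^1)_{2,1}$ tied to the non-log leading data by the indicial equation. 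This yields part (1).

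For (2) I would induct on $i$. Assuming that the expansions of $(A,\Ph)$ and $(\hat A,\hat \Ph)$ agree through all orders strictly below $y^i$, matching \eqref{KW} at order $y^i(\log y)^p$ on each $\si$-block gives a linear equation for the unknown order-$i$ coefficient whose inhomogeneous part depends only on lower-order data that has already been matched. The difference therefore satisfies the homogeneous indicial equation at order $y^i$ on each block, and its kernel is supported exactly on the free leading parameters $a^\si_{\si+1}$, $b^\si_\si$, $(\phi_y^\si)_{\si+1}$; these coincide by hypothesis, so the difference at order $y^i$ vanishes and the induction closes.

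The hardest step will be bookkeeping the nonlinear terms. The quadratic pieces $\Ph\we\Ph$, $a\we a$ and the commutator-type contributions in $F_A$ mix different $\si$-blocks via Clebsch--Gordan decomposition, and at each step of the recursion one must verify that the nonlinear contribution lies in the range of the linearized indicial operator on the target block, so that no unanticipated log terms are generated beyond those of (1). The polyhomogeneity result of \cite{MazzeoWitten2013} guarantees a priori that an expansion of the form \eqref{poly} exists, but pinning down the exact leading orders and log structure requires a careful case analysis of which source $\si$-pairs can resonantly contribute at each target indicial root.
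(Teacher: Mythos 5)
Your overall strategy --- substitute the expansion into the flow form of \eqref{KW} in temporal gauge, block-diagonalize over the isotypic decomposition $\Omega^1_X(\gpp)=\oplus V_\si$, read the indicial roots off the spectrum of $\mathrm{ad}(e)$, and induct for uniqueness --- is exactly the paper's. The gap is in your indicial count. Each $V_\si$ splits further under $L=\st[e,\cdot]$ into $V^-_\si\oplus V^0_\si\oplus V^+_\si$ with eigenvalues $\si+1$, $1$, $-\si$ (Proposition~\ref{actionofL}), so the $a^\si$-equation has three indicial roots $\si+1$, $1$, $-\si$, of which \emph{two} are positive; your claim that ``the other root is negative and ruled out by boundedness'' fails on $V^0_\si$. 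That component is moreover not a scalar problem: the $V^0_\si$ part of $a^\si$ couples to $\phi_y^\si$ through the terms $y^{-1}[\phi_y,e]$ and $y^{-1}\st[a,\st e]$, and the resulting $2\times 2$ indicial system has roots $2$ and $-1$ for \emph{every} $\si$. This is exactly why the paper needs the identities $[e,\Gamma a]=2(a)^0$ of Proposition~\ref{identity} and the two algebraic Lemmas~\ref{VanishingLemma} and~\ref{vanishinglemma2}: they exist to dispose of this coupled block and of the top $\log$-coefficients at resonant orders. Until you handle the $V^0$ block you cannot conclude that the free data for $\si>1$ are only $b^\si_\si\in V^+_\si$, $a^\si_{\si+1}$ and $(\phi_y^\si)_{\si+1}$, nor that no logs appear at leading order; note that for $\si=1$ the coupled root $2$ coincides with $\si+1$, which is precisely why $(\phi_y^1)_2$ and the $V^0$ part of $a^1_2$ carry the undetermined parameter $c^0$ in Proposition~\ref{34prop}.

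Second, your diagnosis of the $\si=1$ logs as a resonance between integer-spaced indicial roots is not the actual mechanism. The $y\log y$ term in $b^1$ is forced by the \emph{inhomogeneous} curvature term $\st F_\om\in V_1$, which enters the $b$-equation at exactly the indicial order $k=1$ where $\ML^1_1$ annihilates $V^+_1$; matching coefficients gives $b^1_{1,1}=(\st F_\om)^+$, the traceless Ricci tensor, and the $y^2\log y$ terms in $a^1$ and $\phi_y^1$ are then propagated from $\st d_\om b^1_{1,1}$. Pinning down this identification, rather than a ``generically nonzero obstruction'', is essential, since it is the whole content of Theorem 1.2 (log-free iff Einstein). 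Your induction for part (2) is sound in outline and matches Proposition~\ref{prop36}, with the caveat that at fixed $y$-order the recursion determines $a^\si_{k+1,p}$ in terms of $a^\si_{k+1,p+1}$, so the induction must also descend in the log power $p$, and the ``kernel supported on the free leading parameters'' step again requires the full three-eigenspace analysis including the coupled $V^0$--$\phi_y$ block.
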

		
		As the leading terms of the $A$'s expansion is the Levi-Civita connection, we can build up the relationship of the geometry of the 3-manifold $X$ and the expansions of the Nahm pole solutions. Under the previous assumptions, we obtain:
		
		\begin{theorem}
			(1) If $b^1_{1,1}=0$, then the expansions of $(A,\Phi)$ don't contains "$\log y$" terms.
			
			(2) $b^1_{1,1}=0$ if and only if $X$ is an Einstein 3-manifold. 
		\end{theorem}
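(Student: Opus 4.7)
The overall plan is to substitute the polyhomogeneous ansatz from Theorem 1 into the Kapustin-Witten equations and analyze the recursive system it imposes on the coefficients $a^{\sigma}_{i,p}, b^{\sigma}_{i,p}, (\phi_y^{\sigma})_{i,p}$. The linear part of this recursion is governed by the indicial operator of the linearization of the Kapustin-Witten equations at the model Nahm pole solution $(\omega, e/y)$; log terms enter exactly when the forcing at a given order has nontrivial projection onto an integer resonance of the indicial polynomial. Theorem 1 already pinpoints the unique leading resonance as the $y\log y$ coefficient $b^1_{1,1}$ in the $\sigma=1$ sector, with the paired log coefficients $a^1_{2,1}$ and $(\phi_y^1)_{2,1}$ recursively determined by it.

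For part (2), the key step is an explicit computation of $b^1_{1,1}$ as the obstruction to extending $b^1$ without a log. Expanding $F_A - \Phi\wedge\Phi + \star d_A\Phi = 0$ and $d_A^{\star}\Phi = 0$ in powers of $y$, the $\mathcal{O}(y^{-2})$ contributions cancel by the Nahm identity satisfied by the vierbein $e$ under the principal embedding, and the first non-trivial contribution at the order producing $b^1_{1,1}$ comes from the cross-terms between the Levi-Civita connection $\omega$, the singular piece $e/y$, and the Riemann curvature $F_\omega$. Using the vierbein to identify $TX$ with the $\sigma=1$ component of $\gpp$, the $V_1$-projection of these terms reduces to a nonzero universal multiple of the traceless Ricci tensor $\mathrm{Ric}_0$ of $X$. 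Since the Weyl curvature vanishes in dimension three, the condition $\mathrm{Ric}_0 = 0$ is equivalent to $X$ being Einstein, proving (2).

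For part (1), I would argue by strong induction on the order $i$. By the hypothesis $b^1_{1,1}=0$, the paired log coefficients $a^1_{2,1}$ and $(\phi_y^1)_{2,1}$ also vanish, since they are expressible as linear functions of $b^1_{1,1}$ through the order-$y$ and order-$y^2$ recursion equations. Now assume that every log coefficient of order $\leq i-1$ is zero. At order $i$, the equations for the $p\geq 1$ coefficients form a linear system whose forcing is polynomial in the lower-order coefficients. By the inductive hypothesis these forcing terms involve only $p=0$ coefficients, which by Theorem 1(2) are recursively determined by the non-log initial data $(a^{\sigma}_{\sigma+1}, b^{\sigma}_{\sigma}, (\phi_y^{\sigma})_{\sigma+1})$. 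Since the indicial polynomial has no further integer resonances beyond the one at $(\sigma, i)=(1,1)$, the forcing at order $i$ projects trivially onto the log subspace, forcing all $p\geq 1$ coefficients at order $i$ to vanish.

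The principal technical obstacle is executing the inductive step of (1): one must verify that no nonlinear interaction of lower-order non-log coefficients can create a new log obstruction at a higher order. This reduces to a careful analysis of the indicial polynomial of the linearized Nahm pole operator across all $\sigma$-sectors, together with the block structure that the principal $\mathfrak{su}(2)_t$ action induces on the recursion, showing that the $(\sigma,i)=(1,1)$ resonance is genuinely the only source of logs and that its vanishing propagates through the full expansion.
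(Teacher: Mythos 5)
Your overall strategy --- a resonance analysis of the indicial operator combined with induction on the order, and the identification of $b^1_{1,1}$ with the traceless Ricci curvature --- is the same as the paper's. But there are two genuine gaps.

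First, in part (1) your key assertion that the indicial polynomial has ``no further integer resonances beyond $(\sigma,i)=(1,1)$'' is false, and the inductive step does not close as stated. Under the eigenspace decomposition $V_\sigma=V_\sigma^-\oplus V_\sigma^0\oplus V_\sigma^+$ of $\star[e,\cdot]$, the indicial operator $\mathcal{L}^\sigma_k(b)=kb+\star[e,b]$ governing the order-$y^k$ coefficient of $b^\sigma$ is singular on $V_\sigma^+$ exactly at $k=\sigma$, and the companion operator for the pair $(a^\sigma,\phi_y^\sigma)$ is singular at $k=\sigma+1$ --- for \emph{every} $\sigma$ occurring in the decomposition of $\mathfrak{g}$, not only $\sigma=1$. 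These are precisely the orders at which the free Cauchy data $b^\sigma_\sigma\in V_\sigma^+$, $a^\sigma_{\sigma+1}$, $(\phi_y^\sigma)_{\sigma+1}$ of Theorem 1.1(2) live, so they cannot be dismissed. What is actually true, and what the paper proves (Proposition 3.7), is that at each of these higher resonances the forcing has no component in the cokernel: $\star F_\omega$ lies entirely in the $\sigma=1$ sector, and the Clebsch--Gordan selection rule (Proposition 2.7) pushes every quadratic interaction of lower sectors into $V_\sigma$ at order strictly higher than $y^{\sigma-1}$. Establishing this requires a joint induction on $\sigma$ and on the order, not an induction on the order alone, together with a separate treatment of the coupled $(a^\sigma,\phi_y^\sigma)$ system on the $V^0$ blocks, where $\mathcal{L}_k$ by itself is not the relevant operator (the paper isolates this as Lemma 2.10). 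You flag this as the ``principal technical obstacle,'' but the resolution is not that the extra resonances are absent; it is that their forcing is non-resonant.

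Second, in part (2) the ``$V_1$-projection'' is too coarse to isolate the obstruction: $\star F_\omega$ already lies entirely in $V_1$ and contains both the scalar curvature (in $V_1^-$) and the traceless Ricci (in $V_1^+$). The log coefficient is forced only by the $V_1^+$ component, because the order-$y$ indicial operator $1+\star[e,\cdot]$ annihilates exactly $V_1^+$; one then reads off $b^1_{1,1}=(\star F_\omega)^+=\mathrm{Ric}_0$ from the pair of equations $b^1_{1,1}+\star[e,b^1_{1,1}]=0$ and $b^1_{1,1}+b^1_1+\star[e,b^1_1]=\star F_\omega$. Your conclusion is correct, but without the finer eigenspace decomposition the ``nonzero universal multiple of $\mathrm{Ric}_0$'' claim is unsubstantiated --- and it is exactly this decomposition that also controls the higher resonances needed for part (1).
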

		
		Combining this with the existence results \cite{nahm1980simple, he2015rotationally, Kronheimer2015Personal}, we obtain the following corollary:
		\begin{corollary}
			Over $X\ti\RP$, there exists a Nahm pole solution whose sub-leading term is smooth to the boundary if and only if $X$ is an Einstein 3-manifold.
		\end{corollary}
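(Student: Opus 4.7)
The plan is to deduce the corollary as an almost immediate combination of Theorem 2 with the cited existence results \cite{nahm1980simple, he2015rotationally, Kronheimer2015Personal}. The nontrivial analytic content has already been absorbed at the level of the expansion theorems; what remains is to read off the corollary and to supply the existence of at least one \pol Nahm pole solution in the Einstein case.

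For the forward implication, I would start with a \pol Nahm pole solution $(A,\Phi)$ on $X\times\RP$ whose sub-leading term is smooth to the boundary. From the leading expansion of $b^1$ in Theorem 1(1), the sub-leading part has the form $y\log y\, b^1_{1,1}+y\, b^1_1$, so smoothness of the sub-leading term at $y=0$ forces the coefficient $b^1_{1,1}=0$. Theorem 2(2) then concludes directly that $X$ is Einstein.

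For the converse, assume $X$ is Einstein. The essential input here is the existence of at least one \pol Nahm pole solution $(A,\Phi)$ on $X\times\RP$; I would invoke \cite{nahm1980simple, he2015rotationally, Kronheimer2015Personal} to furnish this. Once such an $(A,\Phi)$ is in hand, Theorem 2(2) gives $b^1_{1,1}=0$, and Theorem 2(1) then upgrades this vanishing to the absence of \emph{all} $\log y$ terms in the expansion of $(A,\Phi)$. In particular the sub-leading term is smooth to the boundary, which is what was needed.

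The main obstacle I anticipate is verifying that the cited existence results cover the Einstein case in the exact form required. Compact Einstein $3$-manifolds are precisely quotients of $S^3$, flat $\mathbb{R}^3$, or hyperbolic $3$-space, on each of which a model Nahm pole solution is available (Nahm's original half-space solution, He's rotationally invariant construction on $S^3$, and Kronheimer's construction on hyperbolic backgrounds); a descent argument using deck-group equivariance then produces a solution on $X$ itself. Beyond this bookkeeping step, the rest of the corollary follows from Theorem 2 without any further analysis.
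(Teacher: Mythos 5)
Your proof is correct and follows essentially the same route as the paper, which simply states that the corollary follows by combining Theorem 1.2 (via $b^1_{1,1}=(\st F_\om)^+$, so $b^1_{1,1}=0$ iff $X$ is Einstein, and its vanishing kills all $\log y$ terms) with the cited existence results on space forms. Your extra remarks on descending the model solutions from $S^3$, $\mathbb{R}^3$, and hyperbolic space to quotients just make explicit the bookkeeping the paper leaves implicit.
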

		
		We can determine the expansions more clearly when $G=SU(2)$ or $SO(3)$:
		
		\begin{theorem}
			When $G=SU(2)$ or $SO(3)$, under the previous assumptions, let $(A,\Phi=\phi+\phi_ydy)$ be a \pol Nahm pole solution, we obtain:
			
			(1) $(A,\Phi=\phi+\phi_ydy)$ has the following expansions:
			\begin{equation*}
			\begin{split}
			&A\sim \om+\sum_{i=1}^{+\infty}\sum_{p=0}^{i}a_{2i,p}y^{2i}(\log y)^p,\;\phi\sim y^{-1}e+\sum_{i=1}^{+\infty}\sum_{p=0}^{i} b_{2i-1,p}y^{2i-1}(\log y)^p,\\
			&\phi_y\sim \sum_{i=1}^{+\infty}\sum_{p=0}^i(\phi_y)_{2i,i} y^{2i}(\log y)^p
			\end{split}
			\end{equation*} where $e$ is the vierbein form and $\om$ under the pull back of $e$ is the Levi-Civita connection of $X$. Remark that here the expansions of $A$ only contains even order terms and the expansions of $\phi,\phi_y$ only contains odd order terms.
			
			(2) If $X$ is an Einstein 3-manifold, then $(A,\Phi=\phi+\phi_ydy)$ has the following expansions
			\begin{equation*}
			\begin{split}
			A\sim \om+\sum_{i=1}^{+\infty}a_{2i}y^{2i},\;\Phi\sim y^{-1}e+\sum_{i=1}^{+\infty}\sum_{p=0} b_{2i-1}y^{2i-1},\;
			\phi_y\sim \sum_{i=1}^{+\infty}\sum_{p=0} (\phi_y)_{2i}y^{2i}.
			\end{split}
			\end{equation*}
		\end{theorem}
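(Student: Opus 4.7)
The plan is to exploit a formal $\mathbb{Z}_2$-symmetry of the Kapustin--Witten equations under the reflection $y\mapsto -y$, together with the representation-theoretic observation that for $G=SU(2)$ or $SO(3)$ every $\mathfrak{su}(2)_t$-irreducible summand of $\Omega^0(\gpp)$ and $\Omega^1(\gpp)$ is odd-dimensional: since $\gpp$ carries the three-dimensional adjoint action of $\mathfrak{su}(2)_t$, $\Omega^0(\gpp)$ is a single copy of $\tau_3$, and via the vierbein identification $TX\cong\gpp$ one has $\Omega^1(\gpp)\cong\gpp\otimes\gpp$, which by Clebsch--Gordan decomposes as $V_1\oplus V_3\oplus V_5$. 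Consequently every $\si$ that appears is odd, and the leading exponents from Theorem 1.1, namely $\si+1$ for $a^\si,\phi_y^\si$ and $\si$ for $b^\si$ (together with the $\si=1$ sub-leading orders $y^2,y^2,y$ possibly with a $\log y$ factor), are all even for $a,\phi_y$ and odd for $b$, matching the parity pattern claimed.

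Next I introduce a formal involution $\iota$ on polyhomogeneous series in $(y,\log y)$ and on forms by $\iota(y)=-y$, $\iota(\log y)=\log y$, $\iota(dx^i)=dx^i$, $\iota(dy)=-dy$, and set $A':=\iota A$, $\Phi':=-\iota\Phi$. Because $\iota$ is orientation-reversing on $X\times\RP$, the Hodge star satisfies $\iota\circ\st=-\st\circ\iota$, and a direct calculation shows that this sign exactly cancels the $-1$ in $\Phi'$ inside both $\Phi\we\Phi$ and $\st d_A\Phi$, so $(A',\Phi')$ is again a formal polyhomogeneous solution of the Kapustin--Witten equations. Moreover $\iota$ fixes each $dx^i$, so $A'$ stays in temporal gauge, and $\Phi'=y^{-1}e+\MO(1)$, so the Nahm pole leading data are preserved. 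Coefficient-wise, $\iota$ acts by $a^\si_{i,p}\mapsto(-1)^i a^\si_{i,p}$, $b^\si_{i,p}\mapsto(-1)^{i+1}b^\si_{i,p}$, and $(\phi_y^\si)_{i,p}\mapsto(-1)^i(\phi_y^\si)_{i,p}$.

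I then apply Theorem 1.1(2). The free parameters $a^\si_{\si+1}$, $b^\si_\si$, $(\phi_y^\si)_{\si+1}$ each acquire a factor $(-1)^{\si+1}=+1$ because every relevant $\si$ is odd, so the initial data of $(A',\Phi')$ coincide with those of $(A,\Phi)$. Uniqueness of the polyhomogeneous expansion then forces $(A,\Phi)=(A',\Phi')$ as formal expansions, i.e.\ $a_{i,p}=(-1)^i a_{i,p}$, $b_{i,p}=(-1)^{i+1}b_{i,p}$ and $(\phi_y)_{i,p}=(-1)^i(\phi_y)_{i,p}$ for every $(i,p)$; equivalently $a_{i,p}=0$ for odd $i$, $b_{i,p}=0$ for even $i$ and $(\phi_y)_{i,p}=0$ for odd $i$, which proves part (1). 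Part (2) then follows by combining part (1) with Theorem 1.2: if $X$ is Einstein then $b^1_{1,1}=0$, and Theorem 1.2(1) removes every $\log y$ factor from the expansion, leaving exactly the clean form asserted.

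The main obstacle is the careful sign bookkeeping needed to verify that $(\iota A,-\iota\Phi)$ actually solves the full Kapustin--Witten system: one has to balance the orientation-reversal identity $\iota\circ\st=-\st\circ\iota$ against the flip $\Phi\to-\Phi$ in $\Phi\we\Phi$ and in $\st d_A\Phi$ (and check the parallel cancellation in $d_A^\st\Phi=0$), and then confirm that the involution genuinely preserves both the temporal gauge condition and the Nahm pole leading term $y^{-1}e$ so that Theorem 1.1(2) can legitimately be applied to identify the two formal solutions.
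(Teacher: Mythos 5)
Your argument is essentially correct but follows a genuinely different route from the paper. The paper proves part (1) by a direct double induction on the order $k$ and the $\log$-power $p$ (Propositions \ref{inducationeven} and \ref{Inductionodd}): it isolates the partial sums $\MA_{2k},\MB_{2k},\MC_{2k}$, observes that the quadratic source terms $\st a\we a-\st b\we b$, $\st[a,b]$, etc.\ cannot produce wrong-parity contributions, and then kills the wrong-parity coefficients because they satisfy homogeneous linear equations governed by the invertible operators $\ML_k$ together with Lemma \ref{vanishinglemma2}. You instead exhibit the formal involution $y\mapsto -y$, $(A,\Phi)\mapsto(\iota A,-\iota\Phi)$, check that it preserves the flow equations, the temporal gauge and the Nahm pole leading term $y^{-1}e$, and then invoke the uniqueness statement of Theorem 1.1(2) to conclude $a_{i,p}=(-1)^ia_{i,p}$, $b_{i,p}=(-1)^{i+1}b_{i,p}$, $(\phi_y)_{i,p}=(-1)^i(\phi_y)_{i,p}$. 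The sign bookkeeping you worry about does close up (on the $3+1$ flow equations \eqref{se} the three-dimensional $\st$ commutes with $\iota$ and $\partial_y$ anticommutes with it, which is cleaner than working with the four-dimensional star), and the free data $a^1_2$, $b^1_1$, $(\phi_y^1)_2$ together with the geometrically forced coefficient $b^1_{1,1}$ all sit at orders fixed by the involution, so the identification of the two expansions is legitimate. Your approach buys brevity and conceptual clarity by outsourcing the induction to Proposition \ref{prop36}; the paper's approach is longer but yields the explicit recursive formulas for $\Theta^s_{2k+1}$, $\Xi^s_{2k+2}$ that one would need to actually compute coefficients. Two points you should tighten: first, Theorem 1.1(2) is stated for genuine Nahm pole solutions while $(\iota A,-\iota\Phi)$ is only a formal series, so you must observe that the proof of Proposition \ref{prop36} is purely a statement about the coefficient recursion \eqref{formalexpression} and therefore applies verbatim to formal solutions. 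Second, your representation-theoretic preamble is garbled in the paper's conventions: for $G=SU(2)$ the adjoint bundle is the single module $\tau_1$ (dimension $2\si+1=3$), so $\Omega^1(\gpp)=V_1$ is the only summand in \eqref{decomposition1form}, and the splitting into pieces of dimensions $1,3,5$ is the eigenspace decomposition $V_1=V_1^-\oplus V_1^0\oplus V_1^+$ of Proposition \ref{actionofL}, not a decomposition into further $V_\si$'s. This does not damage the proof --- all you actually need is that $\si=1$ is the only index and that the free data live at orders $2,1,2$ --- but as written the justification of ``every $\si$ is odd'' is based on a misidentification. Part (2) is handled exactly as in the paper, by combining part (1) with Theorem 1.2.
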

		
		\textbf{Acknowledgements:}
		Some parts of the paper are based on the conversations with V.Mikhaylov, the author thanks Victor for his generous and inspiring conversations. The author also greatly thanks C.Manolescu, R.Mazzeo and C.Taubes for their kindness and helpful discussions. This work is finished during the Simons center conference "Gauge Theory and Low Dimensional Topology".
	\end{section}
	
	\begin{section}{The Nahm Pole Solutions}
		In this section, we will summarize some basic properties of the Nahm pole solutions to the Kapustin-Witten equations, largely follows \cite{MazzeoWitten2013}, \cite{henningson2012boundary} and \cite{Mikhaylov17}.

		\begin{subsection}{The Setup of the Nahm Pole Solutions}
			Let $X$ be a smooth 3-manifold with a Riemannian metric, we write $M:=X\times \mathbb{R}^+$ and $y$ be the coordinate of $\mathbb{R}^+$. We choose the product metric over $M$ and volume form $\Vol_X\we dy$, where $\Vol_X$ is the volume form of $X$. Let $P$ be an $G$ bundle over $M$, where $G$ is a compact Lie group with Lie algebra $\mathfrak{g}$. 	Let $(A,\Phi)$ be a solution to the Kapustin-Witten equations \eqref{KW}. 
			
			We first consider the boundary condition on $X\times \{0\}\subset M$. Given a principle embedding $\rho:\mathfrak{su}(2)\to\mathfrak{g}$, for an integer $a=1,2,3$ and a point $x\in X$, let $\{\mathfrak{e}_a\}$ be any unit orthogonal basis of the cotangent bundle of $X$ and $\{\mathfrak{t}_a\}$ be sections of 
			the adjoint bundle $\gpp$ lie in the image of $\rho$ with the 
			relation $[\mathfrak{t}_a,\mathfrak{t
			}_b]=\epsilon_{abc}\mathfrak{t}_c$. We write the vierbein form $e:=\sum_{i=1}^3 \ft_i \fe_i$, where $e$ gives an endormorphism of the tangent bundle $TX$ to the adjoint bundle $\gpp$. The definition of the vierbein form $e$ depends on the choice of $\rho.$ We define the Nahm pole boundary condition as follows:
			
			\begin{definition}
				A solution $(A,\Phi)$ to \eqref{KW} over $M$ is a Nahm pole solution if for any point $x\in X$, there exist 
				$\{\mathfrak{e}_a\}$, $\{\mathfrak{t}_a\}$ as above such that when $y\rightarrow 0$, $(A,\Phi)$ 
				has the following expansions: $A\sim \MO(y^{-1+\ep}),\;\Phi\sim 
				y^{-1}e+\MO(y^{-1+\ep})$, for some constant $\ep>0$.
			\end{definition}
			
			Note that our definitions of Nahm pole are stronger than what is considered in \cite{taubes2018sequences}.
			
			Now, we will introduce some basic terminology of the regularity of a function(or differential forms) over manifold with boundary, largely follows from \cite{Mazzeo1991}. Let $\vec{x}=(x_1,x_2,x_3)$ to be the local coordinates of $X$. Over $X\ti\RP$, for any ($\gpp$-valued) differential form $\alpha$, we say $\alpha$ is conormal if for any $j\geq 0$ and $\vec{k}=(k_1,k_2,k_3)$ with $k_i\geq 0$, $\sup|(y\partial_y)^j \pa_{\vec{x}}^{\vec{k}}\alpha|\leq C_{j\vec{k}}$, where $\pa_{\vec{x}}^{\vec{k}}=\pa_{x_1}^{k_1}\pa_{x_2}^{k_2}\pa_{x_3}^{k_3}$ and the $\sup$ is taken over an open neighborhood of the boundary. 
			We say $\alpha$ is \pol if $\alpha$ is conormal and has an asymptotic expansion $\alpha\sim \sum y^{\gamma_j}(\log y)^p\alpha_{jp}(\vec{x}).$
			Here the exponents $\gamma_j$ lie in some discrete set $E\subset \mathbb{C}$, called the index set of $\alpha$, which has the properties that $\Re\gamma_j\rightarrow \infty$ as $j\rightarrow \infty$, the powers $p$ of $\log y$ are all non-negative integers, and there are only finitely many $\log$ terms accompanying any given $y^{\gamma_j}.$ The notation "$\sim$" means $\sup|\alpha-\sum_{j\leq N}y^{\gamma_j}(\log y)^p\alpha_{jp}|\leq y^{\Re \gamma_{N+1}}(\log y)^q,$ and the corresponding statements must hold for the series obtained by differentiating any finite number of times. We say a function smooth to the boundary if all the "$\log y$" terms in the expansion disappear.

			As the expansion holds for the \pol function after finite time derivation, when we consider two side of an equation, a \pol solution requires that the coefficients for each order $y^k$ should be the same on each side. This terms out split the equations into systems of algebraic equations over the boundary and this will be the starting point of our article.
			
			Now, we will summarize the regularity theorem in \cite{MazzeoWitten2013}:
			\begin{theorem}{\rm{(\cite[Prop 5.3, Prop 5.9, Section 2.3)]{MazzeoWitten2013}}}
				\label{expansionexists}
				For $(A,\Phi)$ a Nahm pole solution to the Kapustin-Witten equations, choose a smooth reference connection $A_0$ and denote $\Phi_0$ the leading term of $\Phi$, if $(A,\Phi)$ satisfies the gauge fixing equation $$d^{\sta}_{A_0}(A-A_0)-\sta[\Phi_0,\sta(\Phi-\Phi_0)]=0,$$ 
				where $\sta$ is the 4-dimensional Hodge star operator, then $(A,\Phi)$ is \pol with the following expansions:
				\begin{equation}
				\begin{split}
				A\sim\omega+\sum_{i=1}^{+\infty}\sum_{p=1}^{r_i}y^i (\log y)^p a_{i,p},\;\Phi\sim y^{-1}e+\sum_{i=1}^{+\infty}\sum_{p=1}^{r_i}y^i (\log y)^p b_{i,p}.
				\label{expansionab}
				\end{split}
				\end{equation}
				Here for each $i$, $r_i$ are finite positive integers and $a_{i,p},\;b_{i,p}$ are 1-forms independent of $y$ coordinate and smooth in $x$ direction, $\om$ is the Levi-Civita connection after pulling back by $e$. 
			\end{theorem}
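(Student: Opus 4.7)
The plan is to adapt Mazzeo's polyhomogeneous regularity theory for elliptic edge operators to the gauge-fixed Kapustin--Witten system. First I would linearize the combined system (the equations \eqref{KW} together with the gauge-fixing condition $d^{\sta}_{A_0}(A-A_0)-\sta[\Phi_0,\sta(\Phi-\Phi_0)]=0$) about the model Nahm pole background $(\om, y^{-1}e)$. A careful tally of $y$-powers shows that the resulting linear operator $\DKW$ acting on the perturbation $(a,b,\phi_y)$, after a suitable rescaling by a power of $y$, becomes an elliptic edge operator in the sense of Mazzeo, with a singular but uniformly controlled indicial part at $y=0$.

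The second step is to compute the indicial operator explicitly. Because the leading term of $\Phi$ is $y^{-1}e$ with $e$ built from the principal embedding $\rho:\mathfrak{su}(2)\to\mathfrak{g}$, commutators with $e$ act on $\Omega^{\bullet}_X(\gpp)$ through the $\mathfrak{su}(2)_t$ action. Decomposing into the isotypic summands $V_\si$ and $\tau_\si$ diagonalizes the indicial operator and reduces it on each block to a constant-coefficient ODE in $y$ whose characteristic exponents can be read off in terms of $\si$; these indicial roots are the candidates for the exponents $\gamma_j$ appearing in \eqref{expansionab}.

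Granted the gauge-fixing hypothesis, I would then invoke the general Mazzeo principle that conormal solutions to elliptic edge operators are polyhomogeneous, with index set contained in the closure under addition of the indicial roots. Conormality of $(A-\om, \Phi-y^{-1}e)$, i.e.\ uniform bounds on $(y\pa_y)^j\pa_{\vec{x}}^{\vec{k}}(\cdot)$, follows from standard interior elliptic regularity away from $y=0$ combined with the boundary decay $\MO(y^{-1+\ep})$ built into the Nahm pole definition; iterating the linearized parametrix bootstraps this initial weak decay to full conormality.

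The final step, and the main obstacle, is to promote this abstract regularity to the explicit expansion \eqref{expansionab} for the full nonlinear system. I would match coefficients order by order in $y$: given the expansion through order $y^{i-1}$, the Kapustin--Witten and gauge-fixing equations at order $y^i$ produce an algebraic system on the boundary coefficients $(a_{i,p},b_{i,p})$ whose solvability is governed by the indicial operator computed above. Whenever a forcing term produced by the lower-order nonlinearities resonates with an indicial root, a factor $(\log y)^p$ with $p$ one larger than before must be introduced; this accounts for the finite integers $r_i$. The real work is the combinatorial bookkeeping: one must verify that nonlinear products of polyhomogeneous terms stay polyhomogeneous with only finitely many log powers at each order, and that the forcing at each step lies in the image of the indicial operator modulo a kernel that can be absorbed into the free data of the next order.
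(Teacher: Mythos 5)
This statement is not proved in the paper at all: it is imported verbatim from Mazzeo--Witten (Prop.~5.3, Prop.~5.9, and Section~2.3 of that reference), with the subsequent remark only indicating where each claim is established there. Your outline is a faithful reconstruction of the strategy of that cited proof --- linearization about $(\om,y^{-1}e)$ to a uniformly degenerate (edge) operator, computation of indicial roots via the principal $\mathfrak{su}(2)_t$ isotypic decomposition, Mazzeo's ``conormal implies polyhomogeneous'' machinery under the stated gauge fixing, and the order-by-order bootstrap that generates the finitely many $\log y$ factors --- so it matches the intended argument in approach and substance.
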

			\begin{remark}
				We write $a:=A-A_0,\;b:=\Phi-\Phi_0$, the statement that $(A,\Phi)$ is \pol with the gauge fixing condition $d^{\sta}_{A_0}a-\sta[\Phi,\sta b]=0$ is proved in \rm{\cite[Proposition 5.9]{MazzeoWitten2013}} and it also works for many other gauge fixing conditions, for example $A_y=0$ and $d^{\st}_{A_0}a-\st[\Phi,\st b]=0,$ or even $d^{\st}_{A_0}(A-A_0)=0$ where $A_y$ is the $dy$ component of $A$ and $\st$ is the Hodge star operator of $X$. It is straight forward to check that these two gauge fixing conditions will bring the same expansions. The claim that $(A,\Phi)$ has the leading terms $(\omega+y(\log y)^p a_1,y^{-1}e+y(\log y)^p)$ is proved in \rm{\cite[Section 2.3]{MazzeoWitten2013}}.
			\end{remark}
		\end{subsection}
		\begin{subsection}{Cylindrical Boundary Condition}
			Under the previous convention, for $(A,\Phi)$ a Nahm pole solution over $X\times \RP$, under the temporal gauge, we can assume $A$ doesn't have $dy$ component. We write $\Phi=\phi+\phi_ydy$, then the Kapustin-Witten equations reduce to the flow equation:
			\begin{equation}
			\begin{split}
			&\pa_y A=\st d_A\phi+[\phi_y,\phi],\\
			&\pa_y\phi=d_A\phi_y+\st(F_A-\phi\we\phi),\\
			&\pa_y\phi_y=d_A^{\st}\phi.
			\label{flowequation}
			\end{split}
			\end{equation}
			
			At $y=+\infty$, we usually consider the "steady-state" (y-independent) solutions as the boundary condition at infinity. The most wildly considered case is 
			$$
			F_A-\phi\we\phi=0,\;d_A\phi=0,\;d_A^{\st}\phi=0,
			$$
			which imply that the $G^{\mathbb{C}}$ connection $A+i\phi$ is flat.
			If we assume that $A,\phi$ convergence to a flat $G^{\mathbb{C}}$ connection, then a y-independent solution automatically implies $[\phi,\phi_y]=d_A\phi_y=0$. When $A+i\phi$ is reducible, then $\phi_y$ doesn't necessarily vanish.
			
			We have the following well-known vanishing claim for the $\phi_y$ term, for a proof see \cite[Page 36]{taubes2013compactness}, \cite[Corollary 4.7]{He2017}.
			\begin{proposition}
				For $(A,\Phi)$ a Nahm pole solution over $X\times \mathbb{R}^+$, write $\Phi=\phi+\phi_ydy$, then if $\lim_{y\rightarrow +\infty}|\phi_y|_{\MC^0}=0$, then $\phi_y=0.$
				\label{yv}
			\end{proposition}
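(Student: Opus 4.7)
The plan is to show that $|\phi_y|^2$ is a subharmonic function on $X\times\RP$, so that the hypothesized decay at $y=+\infty$ together with the Nahm pole decay at $y=0$ force $\phi_y\equiv 0$ by the maximum principle.

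First, working in the temporal gauge (in which $A$ has no $dy$-component), the four-dimensional covariant Laplacian on $\phi_y$, viewed as a section of $\gpp$, decomposes as $\Delta_A^{(4)}\phi_y = \Delta_{A_X}\phi_y - \pa_y^2\phi_y$. The heart of the proof is to rewrite $\Delta_A^{(4)}\phi_y$ as a purely algebraic (zero-order) expression in $\phi_y$ and $\phi$ using the three flow equations in \eqref{flowequation}: differentiating $\pa_y\phi_y = d_A^\st\phi$ in $y$ and substituting the first two flow equations produces an expression for $\pa_y^2\phi_y$ whose derivative content is $d_A^\st \pa_y\phi$; applying $d_A^\st$ to the second flow equation and using the Bianchi identity $d_A(F_A-\phi\we\phi) = -[d_A\phi,\phi]$ does the analogous job for $\Delta_{A_X}\phi_y$. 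When the two identities are combined, the $d_A^\st\pa_y\phi$ contributions cancel, the $F_A$ terms drop out via Bianchi, and one is left with a schematic identity of the form
\begin{equation*}
\Delta_A^{(4)}\phi_y \;=\; \st\bigl[\st[\phi_y,\phi],\phi\bigr],
\end{equation*}
i.e.\ a double-commutator in $\phi_y$ and $\phi$ with no surviving derivative terms.

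Combining this with the Weitzenb\"ock identity $\Delta^{(4)}|\phi_y|^2 = 2\langle\phi_y,\Delta_A^{(4)}\phi_y\rangle - 2|\nabla_A\phi_y|^2$ and using ad-invariance of the Killing form, the pairing $\langle\phi_y,\st[\st[\phi_y,\phi],\phi]\rangle$ reduces (up to a positive constant) to $-|[\phi_y,\phi]|^2$, producing the subharmonic inequality $\Delta^{(4)}|\phi_y|^2 \le 0$ on $X\times\RP$. By Theorem \ref{expansionexists} the polyhomogeneous expansion of $\Phi$ has singular part $y^{-1}e$, which has no $dy$-component, so $\phi_y = O(y\log^p y)$ and $|\phi_y|^2\to 0$ as $y\to 0$; the hypothesis $\lim_{y\to+\infty}|\phi_y|_{\MC^0} = 0$ gives the same behavior at infinity. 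Applying the maximum principle on the cylinder $X\times[0,+\infty]$ (with compact slice $X$) then forces $\phi_y\equiv 0$.

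The main obstacle is the algebraic reduction that underlies the displayed identity. One must keep careful track of the $3$-versus-$4$-dimensional Hodge stars, of the sign conventions for $d_A^\st$, and of the $[\pa_y A,\cdot]$ commutator that arises when $\pa_y$ is commuted past $d_A^\st$; the non-linear term $[\phi_y,\phi]$ in the first flow equation and the Bianchi contribution $[d_A\phi,\phi]$ must conspire so that all genuine $\phi$-derivative terms cancel and only pure commutators in $\phi_y$ and $\phi$ survive. Once that cancellation is verified, the Weitzenb\"ock step and the maximum-principle finish are routine.
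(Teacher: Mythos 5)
The paper does not prove this proposition itself---it simply cites Taubes and He---but your Weitzenb\"ock-plus-maximum-principle argument is precisely the standard proof those references give: on the product metric the Kapustin--Witten equations imply $\nabla_A^{*}\nabla_A\phi_y+\sum_a[\phi_a,[\phi_a,\phi_y]]=0$ with no curvature term (since $\phi_y$ is a $0$-form and the $dy$-direction is flat), whence $d^{*}d|\phi_y|^2\le 0$ and the decay at both ends forces $\phi_y\equiv 0$. The one step to state more carefully is the $y\to 0$ end: Definition 2.1 alone only gives $\phi_y=\MO(y^{-1+\ep})$, so you should add that $|\phi_y|$ is gauge-invariant and hence the polyhomogeneous expansion of Theorem \ref{expansionexists}, which holds after a gauge transformation, does yield $|\phi_y|\to 0$ as $y\to 0$.
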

			
			Therefore, if we assume that $(A,\Phi)$ satisfies the Nahm pole boundary condition and convergence to a flat $G^{\mathbb{C}}$ connection with $\phi_y$ convergences to $0$ when $y\to\infty$, then $\Phi$ will not have $dy$ component. However, as pointing out in \cite{Mikhaylov17}, the $\phi_y$ term appear case is also very interesting. In this article, we don't make any assumption of the limit of the solution at $y\to+\infty$.
			
		\end{subsection}
		\begin{subsection}{Elementary Representation Theory}
			Now we will introduce some representation theory of $\mathfrak{su}(2)$, which largely follows from \cite{MazzeoWitten2013}. 
			
			Let $G$ be a simple Lie group with Lie algebra $\mathfrak{g}$, we consider a principal embedding $\rho:\mathfrak{su}(2)\to\mathfrak{g}$, we call the image $\mathfrak{su}(2)_t$ the principal subalgebra. When $\rho$ is a principal embedding, under the action of $\mathfrak{su}(2)_t,$ $\mathfrak{g}$ decomposes as a direct sum of irreducible modules $\tau_\si$ with dimension $2\si+1$, we write $\mathfrak{g}=\oplus_{\si}\tau_\si$. Here $\si$ are positive integers and $\si=1$  corresponds to the principle subalgebra. For the simple Lie algebra $\mathfrak{g}$, the values of $\si$ are precisely record in \cite{henningson2012boundary}. For example, when $G=SU(N)$, the values of $\si$ are $1,2,3,\cdots,N-1.$ Remark that $\sigma=1$ always exist and it correspond to the principal embedding.
			
			Under the action of $\mathfrak{su}(2)_t$, the decomposition of $\mathfrak{g}$ will automatically induce a decomposition of $\Omega_X^1(\gpp)$, which is the $\gpp$-valued 1-from on $X$. We write $V_\si=\Omega_X^1(\tau_\si)$, then 
			\begin{equation}
			\Omega_X^1(\gpp)=\oplus_\si V_\si,\;\Omega^0(\gpp)=\oplus \tau_\si.
			\label{decomposition1form}
			\end{equation}
			
			We define the projection map $\MP_\si:\Omega^1_X(\gpp)\to V_\si$ and $\MP_\si:\Omega^0_X(\gpp)\to \Omega^0(\tau_\si)$, for $a,b\in\Omega^1_X(\gpp)$, we write $a^\si:=\MP_\si a,\;b^\si:=\MP_\si b$ and $\phi_y^\si:=\MP_\si \phi_y$. The Clebsch-Gordan theorem implies the following proposition:
			\begin{proposition}{\rm{\cite{Kostant59}}}
				\label{ClebschGordan}
				Under the previous assumptions, for $\si_1,\si_2$ are positive integers, then $$\st a^{\si_1}\we b^{\si_2}\in\oplus_{\si=|\si_1-\si_2|}^{\si_1+\si_2}V_\si,\;\st [\phi_y^{\si_1}, a^{\si_2}]\in\oplus_{\si=|\si_1-\si_2|}^{\si_1+\si_2}V_\si.$$ 
			\end{proposition}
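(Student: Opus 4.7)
The plan is to derive this Proposition purely from the representation theory of $\mathfrak{su}(2)$: both operations $(\alpha,\beta)\mapsto \st\alpha\we\beta$ and $(\phi,\beta)\mapsto \st[\phi,\beta]$ are $\mathfrak{su}(2)_t$-equivariant bilinear maps once we remember that $\mathfrak{su}(2)_t$ acts only on the Lie-algebra factor of $\gpp$-valued forms. Consequently the admissible range of $\si$ in the target is forced by the classical Clebsch-Gordan rule $\tau_{\si_1}\otimes\tau_{\si_2}\cong\bigoplus_{\si=|\si_1-\si_2|}^{\si_1+\si_2}\tau_\si$ together with Schur's lemma.

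Concretely I would proceed in two steps. First, I observe that the wedge $\we$ and the Hodge star $\st$ act only on the differential-form factor of $\Omega^{\ast}_X(\gpp)\cong \Omega^{\ast}_X\otimes\mathfrak{g}$, so they are trivially $\mathfrak{su}(2)_t$-equivariant on the coefficient side. The Lie bracket of $\mathfrak{g}$-valued forms couples the Lie-algebra coefficients through $[\cdot,\cdot]:\mathfrak{g}\otimes\mathfrak{g}\to\mathfrak{g}$; because $\mathfrak{su}(2)_t\subset\mathfrak{g}$ is a Lie subalgebra, the Jacobi identity tells us that this bracket is itself a morphism of $\mathfrak{su}(2)_t$-representations. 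Restricting the bracket to $\tau_{\si_1}\otimes\tau_{\si_2}\subset\mathfrak{g}\otimes\mathfrak{g}$ and invoking Clebsch-Gordan plus Schur's lemma, its image must lie inside the isotypical components of $\mathfrak{g}=\bigoplus_\si \tau_\si$ that actually appear in $\tau_{\si_1}\otimes\tau_{\si_2}$, namely $\bigoplus_{\si=|\si_1-\si_2|}^{\si_1+\si_2}\tau_\si$. Tensoring with the form factor produced by $\st\we$ then yields the first containment in $\bigoplus_\si V_\si$.

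The second assertion $\st[\phi_y^{\si_1},a^{\si_2}]\in\bigoplus_{\si=|\si_1-\si_2|}^{\si_1+\si_2}V_\si$ is structurally identical, with $\phi_y^{\si_1}\in\Omega^0_X(\tau_{\si_1})$ and $a^{\si_2}\in V_{\si_2}$: the bracket again pairs coefficients in $\tau_{\si_1}\otimes\tau_{\si_2}$, and the same equivariance-plus-Schur argument delivers the same restricted range for $\si$. I do not foresee any serious technical obstacle. The only care required is to keep the differential-form factor and the Lie-algebra factor cleanly separated and to confirm that the $\mathfrak{su}(2)_t$-action only touches the latter, after which the proposition reduces to the branching rule for $\mathfrak{su}(2)$ and a single application of Schur's lemma.
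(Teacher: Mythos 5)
Your argument is correct and is exactly the reasoning the paper alludes to: it gives no written proof, only the remark that the Clebsch--Gordan theorem implies the statement together with the citation to Kostant, and your equivariance-plus-Schur argument is the standard way to make that precise. The key observations — that the wedge and Hodge star touch only the form factor, that the bracket is an $\mathfrak{su}(2)_t$-morphism by ad-invariance (Jacobi), and that Schur's lemma then restricts the target isotypic components to the Clebsch--Gordan range — are all sound.
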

			
			It is also important to understand the action of the vierbein form $e$. We define a linear operator:
			\begin{equation}
			\begin{split}
			L:&V_\si\to V_\si,\\
			&a\to \st[e,a],
			\end{split}
			\end{equation}
			which obeys the following properties:
			\begin{proposition}{\rm{\cite[Section 2.3.2]{MazzeoWitten2013}}}
				\label{actionofL}
				(1) $L$ has three eigenspaces $V^-_\si,V^0_\si,V^+_\si$ with dimension $2\si-1,2\si+1,2\si+3$ and eigenvalues $\si+1,1,-\si$. We can write $V_\si=V^-_\si\oplus V^0_\si\oplus V^+_\si.$ 
				
				(2) For 1-form $a\in V_{\si}$, $[\st e,a]=0$ implies $V^0_{\si}$ component of $a$ is zero.
			\end{proposition}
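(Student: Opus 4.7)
The plan is to identify $V_\si$ as an $\mathfrak{su}(2)_t$-representation and then diagonalize $L$ by recognizing it as a Casimir-type contraction. The vierbein $e=\sum_i\ft_i\fe_i$ furnishes an $\mathfrak{su}(2)_t$-equivariant isomorphism $T^*X\cong\tau_1$ given pointwise by $\fe_i\leftrightarrow\ft_i$, so $V_\si=\Omega^1_X(\tau_\si)\cong\tau_1\otimes\tau_\si$. By the Clebsch-Gordan theorem this splits as $\tau_{\si-1}\oplus\tau_\si\oplus\tau_{\si+1}$ with dimensions $2\si-1,\,2\si+1,\,2\si+3$, which are the candidates for $V^-_\si,V^0_\si,V^+_\si$; what remains is to match these summands to the prescribed eigenvalues.

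Writing $a=\sum_j f_j\fe_j$ with $f_j\in\tau_\si$ and using $\st(\fe_i\we\fe_j)=\ep_{ijk}\fe_k$, I would compute
\begin{equation*}
L(a)=\st[e,a]=\sum_{i,j,k}\ep_{ijk}[\ft_i,f_j]\,\fe_k.
\end{equation*}
Under the identification $\fe_j\leftrightarrow\ft_j$ the adjoint action of $\ft_i$ on $\tau_1$ is $\ft_i\cdot\fe_j=\ep_{ijk}\fe_k$, so the right-hand side is exactly $\sum_i T_i\otimes T_i$ applied to $a$, where $T_i$ denotes the $\ft_i$-action on the relevant factor. Expanding the total Casimir $C_{\mathrm{tot}}=\sum_i(T_i\otimes 1+1\otimes T_i)^2$ gives
\begin{equation*}
\sum_i T_i\otimes T_i=\tfrac12\bigl(C_{\mathrm{tot}}-C_1-C_\si\bigr),
\end{equation*}
and with $C_\nu=-\nu(\nu+1)$ on $\tau_\nu$ in the convention $[\ft_a,\ft_b]=\ep_{abc}\ft_c$, evaluation on the summand $\tau_\mu\subset\tau_1\otimes\tau_\si$ yields eigenvalue $\tfrac12(\si(\si+1)+2-\mu(\mu+1))$. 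Substituting $\mu=\si-1,\si,\si+1$ produces $\si+1,\,1,\,-\si$, which is part (1).

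For part (2), I would compute $\st e=\tfrac12\sum_{i,j,k}\ep_{ijk}\ft_i\,\fe_j\we\fe_k$ and then, using $\sum_{j,k}\ep_{ijk}\ep_{ljk}=2\delta_{il}$,
\begin{equation*}
[\st e,a]=\Bigl(\sum_i[\ft_i,f_i]\Bigr)\Vol_X.
\end{equation*}
The induced map $a\mapsto\sum_i[\ft_i,f_i]$ from $V_\si\cong\tau_1\otimes\tau_\si$ to $\tau_\si$ is $\mathfrak{su}(2)_t$-equivariant, and since $\tau_\si$ appears with multiplicity one in the Clebsch-Gordan decomposition, Schur's lemma forces it to vanish on $V^-_\si\oplus V^+_\si$ and to be a nonzero scalar on $V^0_\si$. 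Non-triviality follows because this contraction is the restriction to $\tau_1\otimes\tau_\si$ of the Lie bracket in $\mathfrak{g}$, which maps surjectively onto $\tau_\si$ since $\tau_\si$ is a non-trivial $\mathfrak{su}(2)_t$-module. Hence $\ker([\st e,\cdot])=V^-_\si\oplus V^+_\si$, giving (2).

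The main obstacle is the bookkeeping in the first computation: the signs produced by $\st$, the structure constants $\ep_{ijk}$, and the vierbein identification $\fe_i\leftrightarrow\ft_i$ must align so that $L$ literally coincides with the tensor-product Casimir ingredient $\sum_i T_i\otimes T_i$. Once this identification is made cleanly, part (1) reduces to the routine Casimir arithmetic above, and part (2) is an application of Schur's lemma together with a one-line non-vanishing check via the Lie bracket on $\mathfrak{g}$.
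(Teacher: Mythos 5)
Your proof is correct. The paper itself gives no argument for this proposition (it is quoted from Mazzeo--Witten, Section 2.3.2), and your derivation --- identifying $V_\si\cong\tau_1\otimes\tau_\si$ via the vierbein, splitting by Clebsch--Gordan, and reading off the eigenvalues of $L=\sum_i T_i\otimes T_i$ from the Casimir identity, then using Schur's lemma for the contraction $[\st e,\cdot]$ --- is exactly the standard computation behind the cited result; the arithmetic $\tfrac12(\si(\si+1)+2-\mu(\mu+1))$ does yield $\si+1,1,-\si$ for $\mu=\si-1,\si,\si+1$. The only point worth pinning down is the orientation/sign convention in $\st(\fe_i\we\fe_j)=\ep_{ijk}\fe_k$, which is confirmed consistent with the paper by checking $L(e)=2e$, matching $V^-_1=\mathrm{span}\{e\}$ with eigenvalue $\si+1=2$ in Example 2.10.
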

			
			\begin{example}
				\rm{	When $G=SU(2)$ or $SO(3)$, let $\{e_i\}$ be the orthnormal basis of $T^{\star}X$ and $\{\mathfrak{t}_i\}$ be basis of $\gpp$ with acyclic relationship, then there is a unique principle embedding of $\mathfrak{su}(2)\to \mathfrak{g}$, with $\si=1$. When the vierbein form $e=\sum_{i=1}^3e_i\mathfrak{t}_i$, we can explicitly compute that 
					$V^-=span \{e\},\;V^0=span\{\mathfrak{t}_ie_j-\mathfrak{t}_je_i\}_{i\neq j},\;V^+=span\{\mathfrak{t}_1e_1-\mft_2 e_2,\;\mathfrak{t}_1e_1-\mft_3 e_3,\;\mft_ie_j+\mft_je_i\}_{i\neq j}$. This decomposition appears in \cite{leung2018energy}. }
			\end{example}
			
			We write $V^{\circ }:=\oplus_\si V_\si^{\circ}$, where $\circ\in\{+,-,0\}$ and for an integer $k$, we define the following operator $\ML_{k}^{\si}(a):=ka+\st[e,a]$ and obtain
			\begin{corollary}
				$\ML_k^{\si}$ is an isomorphism for $k\neq (\si+1),-1,-\si$. 
				\label{isom}
			\end{corollary}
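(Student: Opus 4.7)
The plan is to deduce the corollary as an immediate algebraic consequence of the spectral decomposition of $L$ recorded in Proposition \ref{actionofL}. By the very definition $\ML_k^{\sigma}(a) = ka + \st[e,a]$, the operator $\ML_k^{\sigma}$ acting on $V_\sigma$ is nothing but the shifted operator $k\cdot\mathrm{Id}_{V_\sigma} + L$. Since $\mathrm{Id}_{V_\sigma}$ commutes with $L$, the eigenspace decomposition $V_\sigma = V_\sigma^- \oplus V_\sigma^0 \oplus V_\sigma^+$ furnished by Proposition \ref{actionofL} is simultaneously an eigenspace decomposition for $\ML_k^\sigma$.

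Concretely, I would read off from Proposition \ref{actionofL} that $L$ acts on $V_\sigma^-$, $V_\sigma^0$, $V_\sigma^+$ by the scalars $\sigma+1$, $1$, $-\sigma$ respectively, and hence that $\ML_k^\sigma$ acts on the same three summands by $k+(\sigma+1)$, $k+1$, $k-\sigma$. Because $V_\sigma$ is finite-dimensional, $\ML_k^\sigma$ fails to be bijective precisely when it has a nontrivial kernel, which happens exactly when one of these three scalars vanishes. This pins down the three forbidden values of $k$ appearing in the statement, and shows that for every other $k$ the operator is simultaneously injective and surjective on each summand, hence an isomorphism of $V_\sigma$.

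The proof is genuinely one line once Proposition \ref{actionofL} is in hand, so no step presents a real obstacle; the only care required is to keep track of the sign conventions so that the three non-invertible values coming out of the eigenvalue computation match exactly those recorded in the corollary. Since the argument is purely linear-algebraic and fiberwise in $\sigma$, no regularity or analytic input is needed beyond the spectrum of $L$ itself.
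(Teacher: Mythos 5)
Your argument is exactly the paper's: the proof given there is the one\nobreakdash-line ``Follows directly from Proposition \ref{actionofL}'', i.e., diagonalize $\ML_k^{\sigma}=k\,\mathrm{Id}+L$ on the eigenspace decomposition $V_\sigma=V_\sigma^-\oplus V_\sigma^0\oplus V_\sigma^+$ and read off the eigenvalues. One caveat: with the eigenvalues $\sigma+1,\,1,\,-\sigma$ as listed in Proposition \ref{actionofL}, your computation yields the forbidden set $k\in\{-(\sigma+1),-1,\sigma\}$ rather than the set $\{\sigma+1,-1,-\sigma\}$ printed in the corollary; the paper's own subsequent uses (e.g.\ $\ML_\sigma^\sigma$ having kernel $V_\sigma^+$ in the proof of Proposition \ref{sib1}, and $\ML_1^1$ annihilating $V_1^+$ in \eqref{B11}) confirm that your computed set is the correct one and the printed one is a sign typo, so you should not try to force your eigenvalues to ``match exactly'' the statement as written.
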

			\proof Follows directly from Proposition \ref{actionofL}.
			\qed
			
			Let $\om$ be a connection on $X$, for the 3-dimensional differential operator $\st d_{\om}$, which acts from the space $\Omega_X^1(\gpp)$ to itself, has the following properties:
			\begin{proposition}{\rm{\cite[Page 5]{henningson2012boundary}}}$\st d_{\om}:V^-_\si\to V^0_\si,\; V^0_\si\to V^-_\si\oplus V^0_\si\oplus V^+_\si,\;V^+_\si\to V^0_\si\oplus V^+_\si$.
				\label{deaction}
			\end{proposition}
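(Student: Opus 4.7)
The plan is to reduce the statement to a pointwise representation-theoretic computation on the fiberwise $\mathfrak{su}(2)_t$-module structure of $V_\sigma$. First I observe that $V^-_\sigma, V^0_\sigma, V^+_\sigma$ are parallel sub-bundles of $V_\sigma$ under the connection $\omega$: since $\omega$ has values in $\mathfrak{su}(2)_t$ and the decomposition is the isotypic decomposition of $V_\sigma = \Omega^1_X(\tau_\sigma) \cong \tau_1 \otimes \tau_\sigma$ under $\mathfrak{su}(2)_t$, each piece is $\omega$-parallel. Consequently, for a section $a$ of $V^\bullet_\sigma$, the covariant derivative $d_\omega a$ lies in $\Omega^1_X \otimes V^\bullet_\sigma$, and $\star d_\omega a$ is the image of $d_\omega a$ under the fiberwise $\mathfrak{su}(2)_t$-intertwiner $\Pi^\bullet:\xi\otimes\alpha\mapsto \star(\xi\wedge\alpha)$ from $\Omega^1_X \otimes V^\bullet_\sigma$ to $V_\sigma$. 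Thus the claim reduces to identifying the image of $\Pi^\bullet$.

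The second step applies Schur's lemma via Clebsch-Gordan. One has $\tau_1 \otimes \tau_{\sigma-1} = \tau_{\sigma-2} \oplus \tau_{\sigma-1} \oplus \tau_\sigma$, $\tau_1 \otimes \tau_\sigma = \tau_{\sigma-1} \oplus \tau_\sigma \oplus \tau_{\sigma+1}$, and $\tau_1 \otimes \tau_{\sigma+1} = \tau_\sigma \oplus \tau_{\sigma+1} \oplus \tau_{\sigma+2}$, while the target is $V_\sigma = \tau_{\sigma-1} \oplus \tau_\sigma \oplus \tau_{\sigma+1}$. The common irreducible summands on the two sides constrain where $\Pi^\bullet$ can land: Schur immediately gives $\star d_\omega(V^0_\sigma) \subset V^-_\sigma \oplus V^0_\sigma \oplus V^+_\sigma$ and $\star d_\omega(V^+_\sigma) \subset V^0_\sigma \oplus V^+_\sigma$, which already yields the last two cases of the proposition. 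Schur also gives $\star d_\omega(V^-_\sigma) \subset V^-_\sigma \oplus V^0_\sigma$.

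The final step is to rule out the $V^-_\sigma$-component of $\star d_\omega a$ for $a \in V^-_\sigma$. For $\sigma = 1$ this is free: $V^-_1 \cong \tau_0$ and $\tau_1 \otimes \tau_0 = \tau_1$ contains no $\tau_0$ summand, so Schur alone suffices; this matches the direct check that, writing $a = fe$, $(\star d a)_\ell$ is a combination of $\mathfrak{t}_j$ with $j\neq\ell$ and hence has no $e$-component. For $\sigma \geq 2$, by Schur the obstruction is a single scalar multiple of the canonical intertwiner $\tau_{\sigma-1} \to \tau_{\sigma-1}$, so it is enough to show this scalar vanishes on a highest-weight vector. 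The key mechanism is the antisymmetry of $\star(\xi\wedge\cdot)$: in a local orthonormal frame at $x_0$ with $\omega(x_0) = 0$ and $\{\mathfrak{t}_i\}$ parallel, one has $(\star d_\omega a)_\ell = \epsilon_{ij\ell}(\partial_i a_j + [\omega_i, a_j])$, and the antisymmetry of $\epsilon_{ij\ell}$ in $j,\ell$ forces the ``trace-like'' $\tau_{\sigma-1}$-component selected by the embedding $\tau_{\sigma-1} \hookrightarrow \tau_1 \otimes \tau_\sigma$ to cancel. The main obstacle is carrying this vanishing out cleanly for general $\sigma\geq 2$, which reduces to a specific Racah (6j) identity; I would handle it either by induction on $\sigma$ using the $\mathfrak{su}(2)_t$ raising and lowering operators acting on the explicit highest-weight vectors of $V^-_\sigma$, or by appealing to the explicit matrix elements worked out in \cite{henningson2012boundary}.
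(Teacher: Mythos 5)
The paper gives no proof of this proposition --- it is quoted from the cited source --- so your argument has to stand on its own. Your first two steps are sound: since $d_\omega e=0$, the operator $L=\star[e,\cdot]$ commutes with $d_\omega$, so the eigenbundles $V^{\pm}_\sigma, V^0_\sigma$ are parallel; the symbol of $\star d_\omega$ is the equivariant map $\xi\otimes\alpha\mapsto\star(\xi\wedge\alpha)$; and Schur's lemma applied to $\tau_1\otimes\tau_{\sigma\pm1}$, $\tau_1\otimes\tau_\sigma$ then gives $\star d_\omega(V^0_\sigma)\subset V^-_\sigma\oplus V^0_\sigma\oplus V^+_\sigma$, $\star d_\omega(V^+_\sigma)\subset V^0_\sigma\oplus V^+_\sigma$, and $\star d_\omega(V^-_\sigma)\subset V^-_\sigma\oplus V^0_\sigma$. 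These are in fact the only clauses the paper ever invokes (always for elements of $V^+_\sigma$ or $V^0_\sigma$), and your proof of them is complete.

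The gap is the last step, where you must show that the single Schur scalar governing the $V^-_\sigma\to V^-_\sigma$ block vanishes for $\sigma\geq 2$; you defer this to ``a specific Racah identity'' or to the matrix elements of the cited source. That scalar does not vanish, so this step cannot be completed. Writing the symbol in direction $\xi$ as $L_\xi\otimes 1$ and the diagonal generator as $T_\xi=L_\xi\otimes1+1\otimes J_\xi$, the projection (Wigner--Eckart) formula gives, on $V^-_\sigma\cong\tau_{\sigma-1}$, $P_{V^-}(L_\xi\otimes1)P_{V^-}=T_\xi-\tfrac{\sigma+1}{\sigma}T_\xi=-\tfrac1\sigma T_\xi$, which is nonzero as soon as $\tau_{\sigma-1}$ is nontrivial, i.e.\ for all $\sigma\geq2$. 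Concretely for $\sigma=2$: model $\tau_2$ by traceless symmetric tensors, embed $v\in\mathbb{R}^3$ as $a_{j;ab}=v_a\delta_{jb}+v_b\delta_{ja}-\tfrac23\delta_{ab}v_j\in V^-_2$, and contract the form index of $(\star d a)_{k;ab}=\epsilon_{ijk}\partial_i a_{j;ab}$ with a tensor index (this contraction kills $V^0_2\oplus V^+_2$ and is an isomorphism on $V^-_2$): one finds $(\star d a)_{k;ak}=-\tfrac53(\operatorname{curl}v)_a$, so $\star d_\omega a$ has a nonzero $V^-_2$-component whenever $\operatorname{curl}v\neq0$. Thus for $\sigma\geq2$ only the Schur-forced inclusion $\star d_\omega(V^-_\sigma)\subset V^-_\sigma\oplus V^0_\sigma$ holds; the stronger claim $V^-_\sigma\to V^0_\sigma$ is correct only for $\sigma=1$, where $V^-_1\cong\tau_0$ and your Schur argument (or the direct check with $a=fe$) already suffices. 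No 6j identity will produce the vanishing for $\sigma\geq2$; the honest fix is to restrict the first clause to $\sigma=1$ or weaken it to $V^-_\sigma\to V^-_\sigma\oplus V^0_\sigma$, which costs the paper nothing since that clause is never used.
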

			
			In addition, the vierbein form $e$ will give an identification of $V^0_1$ and $\Omega^0(\tau_1)$. We define the operator
			\begin{equation}
			\begin{split}
			&\Gamma:\Omega^1(\gpp)\to\Omega^0(\gpp),\\
			&\Gamma(a):=\st[a,\st e],
			\end{split}
			\end{equation}
			where $a$ is a 1-form, then we have
			the following identities:
			\begin{proposition}{\rm{\cite[Appendix A]{Mikhaylov17}}}
				For $a\in V_\si=\Omega^1(\tau_\si),b\in \Omega^0(\tau_\si)$, we have:
				
				(1)$\Gamma a=\Gamma (a)^0,\;[e,\Gamma a]=2(a)^0$, $\Gamma[e,b]=2b$, where $(a)^0$ means the projection to the $V^0_\si$ part under the decomposition $V_\si=V^+_\si\oplus V^0_\si\oplus V^0_\si,$
				
				(2)$[e,d^{\st}_\om a]=2(\st d_\om a)^0$, $\Gamma(\st d_\om a)=d^{\st}_\om a$, where $(\st d_\om a)^0$ is the $V^0_\si$ part of $\st d_\om a$.
				
				\label{identity}
			\end{proposition}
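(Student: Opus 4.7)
The strategy is to reduce all four identities to pointwise algebraic statements in the $\mathfrak{su}(2)_t$-modules $\tau_\si$ and $V_\si$ by working in an adapted frame. Fix a local orthonormal frame $\{\fe_a\}$ of $T^{\st}X$ and generators $\{\ft_a\}$ of $\mathfrak{su}(2)_t$ obeying $[\ft_a,\ft_b]=\ep_{abc}\ft_c$, so that $e=\sum_a\ft_a\fe_a$ and $\st e=\sum_a\ft_a\,\st\fe_a$. Expanding $a=\sum_b\mathfrak{m}_b\fe_b$ with $\mathfrak{m}_b\in\tau_\si$, the identity $\fe_b\we\st\fe_c=\delta_{bc}\Vol_X$ collapses $[a,\st e]$ onto the diagonal and gives the closed form
\[
\Gamma a\;=\;\st[a,\st e]\;=\;\sum_b[\mathfrak{m}_b,\ft_b],
\]
which is the main computational tool.

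For part (1), the map $\Gamma:V_\si\to\tau_\si$ is $\mathfrak{su}(2)_t$-equivariant, since it is built entirely from the $\mathfrak{su}(2)_t$-invariant element $e$. By Proposition \ref{actionofL}, $V_\si$ decomposes as $V^-_\si\oplus V^0_\si\oplus V^+_\si$ with dimensions $2\si-1,\,2\si+1,\,2\si+3$; only the middle summand has the same dimension as $\tau_\si$, so by Schur's lemma $\Gamma$ annihilates $V^\pm_\si$, yielding $\Gamma a=\Gamma(a)^0$. For the other two identities I would specialize $a=[e,b]$ with $b\in\tau_\si$ (so $a\in V^0_\si$ and $(a)^0=a$), substitute into the closed form above, and obtain the double commutator
\[
\Gamma[e,b]\;=\;-\sum_a[\ft_a,[\ft_a,b]],
\]
which is the Casimir of $\mathfrak{su}(2)_t$ acting on $b\in\tau_\si$ and produces the stated scalar. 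The identity $[e,\Gamma a]=2(a)^0$ then follows by applying $[e,\cdot]$ to both sides and invoking the isomorphism $[e,\cdot]:\tau_\si\to V^0_\si$ implicit in Proposition \ref{actionofL}.

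For part (2), the crucial input is that $e$ is parallel for $\om$, since by construction $\om$ is the Levi-Civita connection of $X$ pulled back via $e$; consequently $d_\om e=0$ and $d_\om\!\st\! e=0$. Using the three-dimensional Hodge identity $d^{\st}_\om=-\st d_\om\st$ together with the Leibniz rule for the graded Lie bracket,
\[
\Gamma(\st d_\om a)\;=\;\st[\st d_\om a,\st e]\;=\;\pm\,\st d_\om[\st a,\st e]\;\pm\;\st[\st a,d_\om\!\st\! e],
\]
where the second term vanishes by $d_\om\!\st\! e=0$. Unpacking $\st[\st a,\st e]$ in the adapted frame then yields $\Gamma(\st d_\om a)=d^{\st}_\om a$. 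Combining with the second identity of part (1) applied to $a':=\st d_\om a\in V_\si$ gives $[e,d^{\st}_\om a]=[e,\Gamma(\st d_\om a)]=2(\st d_\om a)^0$, as required.

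The main technical obstacle is bookkeeping the signs arising from repeatedly sliding the Hodge star between $1$-forms and $2$-forms while preserving the Lie bracket structure, and pinning down the exact numerical constants: depending on the normalization of the Casimir the coefficient in part (1) could naturally evaluate to $\si(\si+1)$, so the specific value $2$ in the statement depends on the principal-embedding normalization of $e$, which I would cross-check against Mikhaylov's Appendix A.
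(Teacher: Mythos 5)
The paper offers no proof of this proposition: it is quoted verbatim from Mikhaylov's Appendix A, so your proposal has to stand on its own. For part (1) it essentially does. The frame identity $\Gamma a=\sum_b[\mathfrak{m}_b,\ft_b]$ is correct, the Schur argument gives $\Gamma a=\Gamma(a)^0$ cleanly, and $\Gamma[e,b]=-\sum_a[\ft_a,[\ft_a,b]]$ is indeed the Casimir of $\mathfrak{su}(2)_t$ on $\tau_\si$. But the issue you defer to a ``cross-check'' is not a normalization ambiguity you are free to resolve: the paper fixes $[\ft_a,\ft_b]=\ep_{abc}\ft_c$ and $e=\sum_a\ft_a\fe_a$ once and for all, and with that normalization the Casimir on the $(2\si+1)$-dimensional module is $-\si(\si+1)$, so your own computation yields $\Gamma[e,b]=\si(\si+1)b$ and $[e,\Gamma a]=\si(\si+1)(a)^0$. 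Since $e$ is a single global object, no renormalization can make this constant equal to $2$ for every $\si$ simultaneously; the stated constant is correct only for the principal module $\si=1$. You should record $\si(\si+1)$ as the outcome of the proof rather than leave it as an open bookkeeping item.

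Part (2) is where the proposal genuinely fails. First, the displayed Leibniz step does not parse: $[\st a,\st e]$ is a bracket of two $2$-forms on a $3$-manifold, hence a $4$-form, hence identically zero, so $\Gamma(\st d_\om a)=\st[\st d_\om a,\st e]$ cannot be rewritten as $\pm\st d_\om[\st a,\st e]\pm\st[\st a,d_\om\st e]$. Second, and more seriously, no manipulation can establish $\Gamma(\st d_\om a)=d^{\st}_\om a$ for all $a\in V_\si$, because it is not an operator identity there. Both $\Gamma\circ\st d_\om$ and $d^{\st}_\om$ are first-order $\mathfrak{su}(2)_t$-equivariant operators built from the parallel element $e$, so in a synchronous frame they are determined by their symbols, and one finds $\Gamma(\st d_\om a)=-d^{\st}_\om(\st[e,a])$; hence the claimed identity holds exactly on the eigenspace where $\st[e,\cdot]=-1$, i.e.\ on $V^+_1$. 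A direct check on flat $\mathbb{R}^3$ with $\om$ trivial and $a=f(\ft_1\fe_2-\ft_2\fe_1)\in V^0_1$ gives $\Gamma(\st d a)=\partial_2 f\,\ft_1-\partial_1 f\,\ft_2=-d^{\st}a$, so the two sides differ by a sign already on $V^0_1$, and this relative sign between eigenspaces is convention-independent. The first identity of part (2) inherits the same restriction via part (1). The workable strategy is therefore the one you used for part (1) --- decompose into $L$-eigenspaces and compare the resulting scalars summand by summand --- which both proves the identities where they do hold (on $V^+$, which is how the paper applies them) and exposes the domain restriction that the bald statement omits.
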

			
			With this preparation, we will state two algebraic lemma which will be heavily used in the following parts of the paper:
			\begin{lemma}
				\label{VanishingLemma}
				For $a_1,a_2\in V_\si$, $b\in V_\si^+$, $c_1,c_2\in \Omega^0(\tau_\si)$ and a positive integer $r$, if they satisfy the following algebraic equations
				\begin{equation}
				\begin{split}
				&(\si+1)a_1=\st[e,a_1]-[e,c_1],\\
				&(\si+1)c_1=-\Gamma a_1,\\
				&ra_1+(\si+1)a_2=\st[e,a_2]+[c_2,e]+\st d_\om b,\\
				&rc_1+(\si+1)c_2=-\Gamma a_2+d_\om^\st b,
				\end{split}
				\end{equation}
				then $a_1=0$ and $c_1=0.$
			\end{lemma}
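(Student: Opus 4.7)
The plan is to decompose each of the four given equations along the splitting $V_\si=V_\si^-\oplus V_\si^0\oplus V_\si^+$ from Proposition \ref{actionofL} and to use the identities $\Gamma[e,c]=2c$ and $\Gamma a=\Gamma(a)^0$ of Proposition \ref{identity} to shuttle between $V^0$-valued $1$-forms and $\tau_\si$-valued functions. A convenient preliminary remark, which I would record first, is that $[e,c]\in V_\si^0$ for every $c\in\Omega^0(\tau_\si)$; this follows from $[e,\Gamma a]=2(a)^0$ together with the surjectivity of $\Gamma$ (since $c=\tfrac12\Gamma[e,c]$).

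First I would project the first equation onto the three eigenspaces of $L=\st[e,\cdot\,]$. The $V_\si^+$-component gives $(2\si+1)a_1^+=0$, hence $a_1^+=0$; the $V_\si^-$-component is tautological; and the $V_\si^0$-component produces $\si a_1^0=-[e,c_1]$. Applying $\Gamma$ to this last identity and feeding the result into the second given equation collapses to $(\si-1)(\si+2)c_1=0$, which immediately disposes of $c_1$ (and hence of $a_1^0$) whenever $\si\ge 2$.

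The hard part will be the resonant case $\si=1$, where the algebraic cancellation above is trivial and the first two equations alone admit a nontrivial kernel. My plan here is to extract the missing information from the third and fourth equations. Applying $\Gamma$ to the $V_\si^0$-part of the third equation and invoking $\Gamma[c_2,e]=-2c_2$ together with $\Gamma(\st d_\om b)=d_\om^\st b$ from Proposition \ref{identity} produces the formula $\Gamma a_2=2rc_1-2c_2+d_\om^\st b$. Substituting into the fourth equation, the $c_2$ and $d_\om^\st b$ terms cancel identically and the equation collapses to $3rc_1=0$, forcing $c_1=0$ since $r$ is a positive integer.

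With $c_1=0$ in hand for every $\si$, the $V_\si^0$-projection of the first equation gives $a_1^0=0$, and together with $a_1^+=0$ only $a_1^-$ remains to control. To finish I would project the third equation onto $V_\si^-$: by Proposition \ref{deaction} the term $\st d_\om b$ has no $V_\si^-$-component (since $b\in V_\si^+$), and $[c_2,e]\in V_\si^0$ contributes nothing in $V_\si^-$ either, so the $V_\si^-$-projection reduces cleanly to $ra_1^-=0$. Positivity of $r$ then yields $a_1^-=0$, closing the argument.
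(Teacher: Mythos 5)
Your proof is correct and follows essentially the same route as the paper's: project onto the eigenspaces of $\st[e,\cdot]$, kill $a_1^+$ with the first equation and $a_1^-$ with the $V_\si^-$-part of the third (using that $\st d_\om b$ and $[c_2,e]$ have no $V_\si^-$-component), settle the $V_\si^0$-part from the first two equations when $\si\geq 2$, and break the $\si=1$ resonance by playing the third and fourth equations against each other. The only cosmetic difference is that in the $\si=1$ case you apply $\Gamma$ to the third equation and substitute into the fourth, whereas the paper applies $[e,\cdot]$ to the fourth and compares with the third; both yield the same relation $3r\,c_1=0$ (equivalently $3r(a_1)^0=0$).
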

			\begin{proof}
				Consider the $V^+_\si$ part of the first equation, we obtain $(a_1)^+=0$, where $(a_1)^+$ is the $V^+_\si$ part of $a_1$. As $-(\si+1)a_2+\st[e,a_2]\in V^+\oplus V^0$ and by Lemma \ref{deaction}, $\st d_\om b\in V^+\oplus V^0$, consider the $V^-_\si$ part of the third equation, we obtain $(a_1)^-=0$. 
				
				The only situation left is the $V^0$ part. Consider the $V^0$ part of the first two equations, we obtain 
				\begin{equation}
				\si(a_1)^0=-[e,c_1],\;c_1=-\frac{1}{\si+1}\Gamma a_1.
				\label{V0partLemma}
				\end{equation}
				Applying Proposition \ref{identity}, we obtain $\si (a_1)^0=\frac{2}{\si+1}(a_1)^0.$ When $\si\neq 1$, then $(a_1)^0=0$.
				
				If $\si=1$, then the two equations in \eqref{V0partLemma} are equivalent. To be explicit, we obtain 
				$(a_1)^0=-[e,c_1]$ or $c_1=-\frac12 \Gamma a_1.$ We consider the $V^0_\si$ part of the last two equations, we obtain
				\begin{equation}
				\begin{split}
				&r(a_1)^0+(a_2)^0=[c_2,e]+(\st d_\om b)^0\\
				&rc_1+2c_2=-\Gamma a_2+d_\om^\st b.
				\label{V0partlemma2}
				\end{split}
				\end{equation}
				Using $[e,\;]$ acts on the second equation of \eqref{V0partlemma2}, we obtain $$-r(a_1)^0+2(a_2)^0=2[c_2,e]+2(\st d_\om b)^0.$$ Comparing the coefficients with the first equation of \eqref{V0partlemma2}, we obtain $(a_1)^0=0$. 
			\end{proof}
			
			\begin{lemma}
				\label{vanishinglemma2}
				Let $a,\Theta\in V_\si^0$, $\phi,\Xi\in \Omega^0(\tau_\si)$, let $\lam$ be a real number such that $\lam\neq 2$ or $-1$. If they satisfy
				\begin{equation}
				\begin{split}
				(\lam-1)a&=\Theta-[e,\phi],\;\lam \phi=-\Gamma a+\Xi,
				\end{split}
				\end{equation}
				then 
				\begin{equation}
				\begin{split}
				a=\frac{1}{\lam^2-\lam-2}(\lam \Theta-[e,\Xi]),\; \phi=\frac{1}{\lam^2-\lam-2}((\lam-1)\Xi-\Gamma \Theta).
				\end{split}
				\end{equation}
				Specially, if $\Theta=\Xi=0$, then $a=\phi=0$.
			\end{lemma}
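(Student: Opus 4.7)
The plan is to decouple the two unknowns $a$ and $\phi$ by applying the operators $\Gamma$ and $[e,\cdot]$ to the given equations and exploiting the identities in Proposition \ref{identity}. The key observation is that since $a\in V^0_\si$, we have $(a)^0=a$, so Proposition \ref{identity}(1) gives directly $[e,\Gamma a]=2a$ and $\Gamma[e,\phi]=2\phi$, which will turn the coupled system into two independent linear equations over the scalar $\lam^2-\lam-2$.

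To solve for $\phi$, I would apply $\Gamma$ to the first equation to obtain
\begin{equation*}
(\lam-1)\Gamma a=\Gamma\Theta-\Gamma[e,\phi]=\Gamma\Theta-2\phi,
\end{equation*}
then substitute $\Gamma a=\Xi-\lam\phi$ from the second equation. Rearranging gives $(\lam-1)(\Xi-\lam\phi)=\Gamma\Theta-2\phi$, i.e.\ $(\lam^2-\lam-2)\phi=(\lam-1)\Xi-\Gamma\Theta$. Since $\lam^2-\lam-2=(\lam-2)(\lam+1)\neq 0$ by the hypothesis $\lam\neq 2,-1$, dividing yields the claimed expression for $\phi$.

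To solve for $a$, I would apply $[e,\cdot]$ to the second equation, obtaining $\lam[e,\phi]=-[e,\Gamma a]+[e,\Xi]=-2a+[e,\Xi]$, and then substitute $[e,\phi]=\Theta-(\lam-1)a$ from the first equation. This produces $\lam\Theta-\lam(\lam-1)a+2a=[e,\Xi]$, i.e.\ $(\lam^2-\lam-2)a=\lam\Theta-[e,\Xi]$, again solvable under the same hypothesis to give the claimed expression for $a$. The "specially" statement is then immediate: when $\Theta=\Xi=0$ both numerators vanish.

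No real obstacle is expected; the only subtlety is making sure the intermediate manipulations never implicitly divide by $\lam$ or $\lam-1$ (so that $\lam=0$ and $\lam=1$ are allowed), which is why I route the derivation through applying $[e,\cdot]$ or $\Gamma$ rather than algebraically eliminating one variable from a single equation. This keeps $\lam^2-\lam-2$ as the only denominator, matching the stated hypothesis $\lam\neq 2,-1$.
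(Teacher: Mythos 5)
Your proposal is correct and follows essentially the same route as the paper: apply $[e,\cdot]$ to the second equation and use $[e,\Gamma a]=2a$ (valid since $a\in V^0_\si$) to solve for $a$, and apply $\Gamma$ to the first equation with $\Gamma[e,\phi]=2\phi$ to solve for $\phi$, with $\lam^2-\lam-2=(\lam-2)(\lam+1)$ as the only denominator. Nothing is missing.
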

			\begin{proof}
				Applying $[e, ]$ to the second equation, by Proposition \ref{identity}, we compute $$\lam[e,\phi]=-[e,\Gamma a]+\Gamma \Xi=-2a+\Gamma \Xi.$$ Combining with the first equation, we obtain
				$a=\frac{1}{\lam^2-\lam-2}(\lam \Theta-[e,\Xi])$. Acting the operator $\Gamma$ to the first equation and combing with the second, we will obtain $ \phi=\frac{1}{\lam^2-\lam-2}((\lam-1)\Xi-\Gamma \Theta)$. The rest follows immediately.
			\end{proof}
			
		\end{subsection}
	\end{section}
	
	\begin{section}{Leading Expansions of the Nahm Pole Solutions}
		In this section, we will determined the coefficients a Nahm pole solution up to several leading terms. For $P$ a principle $G$ bundle over $X\ti\RP$, $A$ a connection over $P$ and $\Phi$ a $\gpp$-valued 1-form. We choose a gauge such that $A$ doesn't have $dy$ component. We write $\Phi=\phi+\phi_y dy$ and recall the follow equation
		\begin{equation*}
		\begin{split}
		&\pa_y A=\st d_A\phi+[\phi_y,\phi],\\
		&\pa_y\phi=d_A\phi_y+\st(F_A-\phi\we\phi),\\
		&\pa_y\phi_y=d_A^{\st}\phi.
		\end{split}
		\end{equation*}
		We denote $A=\om+a,\;\Phi=y^{-1}e+b+\phi_y\;dy$ where $\om$ is a connection independent of the $\RP$ direction, $a,b$ are $1-$forms independent of $y$. Recalling theorem \ref{expansionexists}, we can assume $a,b$ have expansions with leading order $\MO(y(\log y)^{p})$.
		
		By a straight forward computation, the $\MO(y^{-2})$ order coefficients of the flow equations \eqref{flowequation} is $\st e=e\we e$, which is automatically satisfied. The $\MO(y^{-1})$ order coefficients of equations \eqref{flowequation} reduce to 
		$d_{\omega}e=0,\;d_{\omega}^{\st}e=0.$ These equations can be understood as:
		\begin{proposition}{\rm{\cite[Section 4.1]{MazzeoWitten2013}}}
			\label{Levi-Civita}
			Under the pull back induced by the vierbein form $e:TX\rightarrow \gpp$, $\om$ is the Levi-Civita connection of $X$.
		\end{proposition}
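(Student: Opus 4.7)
The plan is to interpret the two equations $d_\om e = 0$ and $d_\om^\st e = 0$, arising from the $\MO(y^{-1})$ coefficients, as simultaneously forcing $\om$ to take values in $\tau_1 = \mathfrak{su}(2)_t$ and enforcing Cartan's first structure equation for the pulled-back connection on $TX$. The standard uniqueness of the Levi-Civita connection then closes the argument.

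First I would decompose $\om = \om^{(1)} + \sum_{\sigma>1} \om^{(\sigma)}$ according to $\mathfrak{g} = \bigoplus_\sigma \tau_\sigma$. Since $e \in \Omega^1(\tau_1)$ and each $\tau_\sigma$ is an irreducible $\mathfrak{su}(2)_t$-module so that $[\tau_\sigma, \tau_1] \subseteq \tau_\sigma$, projecting $d_\om e = 0$ onto the $\tau_\sigma$ summand for each $\sigma > 1$ isolates the purely algebraic piece $[\om^{(\sigma)}, e] = 0$ (the $de$ contribution lives entirely in $\tau_1$). Applying $\st$ and recognising this as $L(\om^{(\sigma)}) = 0$ for the operator $L$ of Proposition \ref{actionofL}, the invertibility of $L$ on $V_\sigma$ (its eigenvalues $-\sigma,\,1,\,\sigma+1$ are all nonzero for $\sigma \geq 1$) forces $\om^{(\sigma)} = 0$ for every $\sigma > 1$. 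Hence $\om \in \Omega^1(\tau_1)$.

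Next I would unpack the remaining $\tau_1$-component of $d_\om e = 0$. Writing $\om = \om^d \mft_d$ with $\om^d$ scalar 1-forms, the bracket $[\mft_d, \mft_a] = \epsilon_{dae}\, \mft_e$ reduces the equation to $d\fe^e + (\om)^e{}_{a} \wedge \fe^a = 0$, where $(\om)^e{}_{a} := \sum_d \om^d \epsilon_{dae}$. Antisymmetry of $\epsilon_{dae}$ in $(a,e)$ makes $(\om)^e{}_{a}$ skew, i.e.\ metric compatibility in the orthonormal coframe $\{\fe^a\}$, while the displayed equation itself is Cartan's first structure equation expressing torsion-freeness. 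By the fundamental theorem of Riemannian geometry, the connection induced by $\om$ on $TX$ via $e$ is the Levi-Civita connection. The companion identity $d_\om^\st e = 0$ is then a consistency check that is automatic in this setting.

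I expect the main obstacle to be tracking the splitting of $[\om, e]$ under the $\mathfrak{su}(2)_t$-decomposition cleanly and invoking Proposition \ref{actionofL} with the right eigenvalue data; without this invertibility, the non-principal components $\om^{(\sigma)}$ could in principle carry additional information. Once $\om$ is known to lie in $\tau_1$, the identification with the Levi-Civita connection is a straightforward translation between the $\{\mft_a\}$-basis presentation and the standard Cartan structure equations.
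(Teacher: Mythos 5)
Your argument is correct and follows essentially the route the paper relies on: the paper derives $d_{\om}e=0$ and $d_{\om}^{\st}e=0$ from the $\MO(y^{-1})$ coefficients of \eqref{flowequation} and then simply cites \cite[Section 4.1]{MazzeoWitten2013} for their interpretation, which is exactly the decomposition-plus-Cartan-structure-equation argument you spell out. Your use of the invertibility of $L$ on each $V_\sigma$ (eigenvalues $\sigma+1$, $1$, $-\sigma$, all nonzero) to kill the components of $\om$ outside $\tau_1$, followed by the first structure equation with a skew $\mathfrak{so}(3)$-valued form on the $\tau_1$ part, is a correct filling-in of that citation.
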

		Under the decomposition of Proposition \ref{actionofL}, we can also decompose the dual $\st F_\om$ of the Riemannian curvature
		\begin{equation}
		\st F_\om=(\st F_\om)^-+(\st F_\om)^+,
		\end{equation}
		where the first term is the curvature scale and the second term is the traceless part of the Ricci tensor, where these determine the curvature tenser completely for 3-dimensional manifold. In addition, under the decomposition \eqref{decomposition1form}, $\st F_\om\in V_1,$ where $V_1$ is the component corresponds to the principal embedding.
		\begin{subsection}{Leading Order Expansion}
			Now, we will explicitly study the expansions of the subleading terms. Recall that $A=\om+a,\;\Phi=y^{-1}e+b+\phi_y\;dy$, using \eqref{flowequation}, we obtain the following equations, which is also precisely studied by Taubes in \cite{taubes2018sequences}.
			\begin{equation}
			\begin{split}
			&\partial_y a=\st d_\om b+y^{-1}\st[a,e]+y^{-1}[\phi_y,e]+\st[a,b]+[\phi_y,b],\\
			& \partial_yb=\st F_\om-y^{-1}\st[e,b]+\st d_\om a+d_\om\phi_y+[a,\phi_y]+\st a\wedge a-\st b\wedge b,\\
			&\pa_y\phi_y=d^\st_\om b-y^{-1}\st[a,\st e]-\st[a,\st b].
			\label{se}
			\end{split}
			\end{equation}
			
			\begin{subsubsection}{$\si=1$}
				Recalling \eqref{decomposition1form}, we have the projection map $\MP_{\si}:\Omega^1(\gpp)\to V_{\si}$ and $\MP_{\si}:\Omega^0(\gpp)\to \Omega^0(\tau_\si)$. 
				We define $a^{\si}:=\MP_{\si}a,\;b^{\si}:=\MP_{\si}b,\;\phi_y^\si:=\MP_\si \phi_y$
				By Proposition \ref{Levi-Civita}, $\MP_1\st F_\om=\st F_\om$. Applying $\MP_1$ to both side of \eqref{se}, we obtain:
				\begin{equation}
				\begin{split}
				&\partial_y a^{1}=\st d_\om b^{1}+y^{-1}\st[e,a^{1}]+y^{-1}[\phi_y^1,e]+\MP_1(\st[a,b]+[\phi_y,b]),\\
				& \partial_yb^{1}=\st F_\om-y^{-1}\st[e,b^1]+\st d_\om a^1+d_\om\phi_y^1+\MP_1([a,\phi_y]+\st a\wedge a-\st b\wedge b),\\
				&\pa_y\phi_y^1=d^\st_\om b^1-y^{-1}\st[a^1,\st e]-\MP_1(\st[a,\st b]).
				\label{sigma=1projectionequation}
				\end{split}
				\end{equation}
				
				By Theorem \ref{expansionexists}, we write the expansions of $a^1,b^1,\phi_y^1$ as $$a^1\sim \sum_{p=0}^{r_1}a^1_{1,p}y(\log y)^{p}+\cdots,\;b^1\sim \sum_{p=0}^{r_1}b^1_{1,p}y(\log y)^{p}+\cdots,\;\phi_y^1\sim\sum_{p=0}^{r_1} (\phi_y^1)_{1,p}y(\log y)^p+\cdots.$$
				For simplification, we also denote $a^1_1:=a^1_{1,0}$ and $b^1_1:=b^1_{1,0}.$ Then we obtain:
				
				\begin{proposition}
					\label{Einstein1}
					(a) $a^1_{1,p}=(\phi^1_y)_{1,p}=0$ for any $p$,
					
					(b) For $p\geq 2$, $b^1_{1,p}=0$, 
					
					(c) $b_{1,1}^1$ equals to the $V^+$ part of $\st F_{\om}$. Moreover, there exists $c^+\in V^+$ such that we can express $b^1_1$ in the following way:
					$$b^1_1=c^++\frac12 (\st F_\om)^0+\frac13 (\st F_\om)^-.$$
				\end{proposition}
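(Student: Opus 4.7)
The plan is to substitute the polyhomogeneous ansatz into the projected equations \eqref{sigma=1projectionequation} and match the coefficients of $y^{0}(\log y)^{p}$ on both sides. Because $a^{1},b^{1},\phi_y^{1}$ all begin at order $y(\log y)^{p}$ with tails of order at least $y^{3/2}$, the only terms contributing at $y^{0}(\log y)^{p}$ are $\pa_y$ of the leading expansion (which via $\pa_y[y(\log y)^{p}]=(\log y)^{p}+p(\log y)^{p-1}$ produces the combination $a^{1}_{1,p}+(p+1)a^{1}_{1,p+1}$, and analogously for $b^{1}$ and $\phi_y^{1}$), the $y^{-1}$-prefactored algebraic terms such as $y^{-1}\st[e,a^{1}]$, and the $y^{0}$ source $\st F_\om$ in the $b^{1}$-equation. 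All genuine covariant derivatives $\st d_\om a^{1},\st d_\om b^{1},d_\om\phi_y^{1},d_\om^{\st}b^{1}$ and all quadratic terms enter only at order $y$ or higher and drop out. Setting $a^{1}_{1,r_1+1}=b^{1}_{1,r_1+1}=(\phi_y^{1})_{1,r_1+1}=0$ by convention, the resulting algebraic recursion is
\begin{align*}
a^{1}_{1,p}+(p+1)a^{1}_{1,p+1}-\st[e,a^{1}_{1,p}]+[e,(\phi_y^{1})_{1,p}]&=0,\\
(\phi_y^{1})_{1,p}+(p+1)(\phi_y^{1})_{1,p+1}+\Gamma(a^{1}_{1,p})&=0,\\
b^{1}_{1,p}+(p+1)b^{1}_{1,p+1}+\st[e,b^{1}_{1,p}]&=\st F_\om\,\delta_{p,0}.
\end{align*}

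Part (a) follows by downward induction on $p$ starting from $p=r_1$. By the induction hypothesis (or the convention at $p=r_1$), the $(p+1)$-terms vanish, leaving the reduced system $a^{1}_{1,p}-\st[e,a^{1}_{1,p}]+[e,(\phi_y^{1})_{1,p}]=0$ and $(\phi_y^{1})_{1,p}+\Gamma(a^{1}_{1,p})=0$. By Proposition \ref{actionofL}, $\st[e,\cdot]$ acts as $-1,1,2$ on $V^{+}_1,V^{0}_1,V^{-}_1$ respectively, and by Proposition \ref{identity}(1) $[e,\cdot]$ sends $\Omega^{0}(\tau_1)$ into $V^{0}_1$ only. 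Projecting the first equation onto $V^{+}_1$ and $V^{-}_1$ gives $(a^{1}_{1,p})^{+}=(a^{1}_{1,p})^{-}=0$ directly, while its $V^{0}_1$ projection forces $[e,(\phi_y^{1})_{1,p}]=0$ and hence, via $\Gamma[e,b]=2b$, $(\phi_y^{1})_{1,p}=0$. Substituting back into the second equation yields $\Gamma(a^{1}_{1,p})=0$, and the identity $[e,\Gamma a]=2(a)^{0}$ then kills the remaining $V^{0}_1$ component. (This is the same mechanism as in Lemma \ref{vanishinglemma2} with $\lambda=1$.)

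Part (b) is analogous but applied to the $b^{1}$-equation. For $p\geq 1$ the source $\st F_\om$ is absent, so the equation is $\ML^{1}_{1}(b^{1}_{1,p})=-(p+1)b^{1}_{1,p+1}$, where $\ML^{1}_{1}=1+\st[e,\cdot]$ has eigenvalues $3,2,0$ on $V^{-}_1,V^{0}_1,V^{+}_1$ by Proposition \ref{actionofL}. On $V^{-}_1$ and $V^{0}_1$ the operator is invertible, so downward induction from $p=r_1$ forces the $V^{-}_1$ and $V^{0}_1$ components of $b^{1}_{1,p}$ to vanish for every $p\geq 1$; on $V^{+}_1$ the equation degenerates to $(p+1)(b^{1}_{1,p+1})^{+}=0$, which gives $(b^{1}_{1,q})^{+}=0$ for every $q\geq 2$. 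Hence $b^{1}_{1,p}=0$ for $p\geq 2$ and $b^{1}_{1,1}$ lies entirely in $V^{+}_1$.

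Finally, part (c) reads off the $p=0$ equation $b^{1}_{1}+b^{1}_{1,1}+\st[e,b^{1}_{1}]=\st F_\om$. On $V^{+}_1$, where $\ML^{1}_{1}$ vanishes, it collapses to $b^{1}_{1,1}=(\st F_\om)^{+}$, consistent with $b^{1}_{1,1}\in V^{+}_1$ from (b). The $V^{0}_1$ and $V^{-}_1$ projections invert directly to $(b^{1}_{1})^{0}=\tfrac12(\st F_\om)^{0}$ and $(b^{1}_{1})^{-}=\tfrac13(\st F_\om)^{-}$, while the $V^{+}_1$ part of $b^{1}_{1}$ is unconstrained at this order and is the free parameter $c^{+}$. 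Assembling these pieces gives $b^{1}_{1}=c^{+}+\tfrac12(\st F_\om)^{0}+\tfrac13(\st F_\om)^{-}$. The one delicate point throughout is the zero eigenvalue of $\ML^{1}_{1}$ on $V^{+}_1$, which is exactly the algebraic obstruction responsible for the irreducible $\log y$ coefficient $b^{1}_{1,1}$; everything else is bookkeeping controlled by the eigenspace decomposition of Proposition \ref{actionofL}.
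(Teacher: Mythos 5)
Your proof is correct and follows essentially the same route as the paper: substitute the expansion into the $\sigma=1$ projected flow equations \eqref{sigma=1projectionequation}, match the $y^{0}(\log y)^{p}$ coefficients, and exploit the eigenvalues $2,1,-1$ of $\st[e,\cdot]$ on $V^{-}_1,V^{0}_1,V^{+}_1$ together with the $\Gamma$ identities of Proposition \ref{identity} in a downward induction on $p$. The identification of the zero eigenvalue of $1+\st[e,\cdot]$ on $V^{+}_1$ as the source of both the undetermined $c^{+}$ and the unavoidable $\log y$ coefficient $b^{1}_{1,1}=(\st F_\om)^{+}$ matches the paper's argument exactly.
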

				\begin{proof}
					(a) Consider the $\MO((\log y)^{r_1})$ order coefficients of the first and third equations of \eqref{sigma=1projectionequation}, we  obtain: 
					\begin{equation}
					\begin{split}
					&a_{1,r_1}^1=\st[e,a_{1,r_1}^1]+[(\phi_y^1)_{1,r_1},e],\\
					&(\phi^1_y)_{1,r_1}=-\st[a^1_{1,r_1},\st e].
					\end{split}
					\end{equation}
					As $[(\phi_y^1)_{1,r_1},e]\in V^0$, by Proposition \ref{actionofL}, $(a_{1,r_1}^1)^+=(a_{1,r_1}^1)^-=0$. Projecting the first equation into $V^0$ part, we obtain $[(\phi_y^1)_{1,r_1},e]=0$. Using Proposition \ref{identity}, we obtain $(\phi_y^1)_{1,r_1}=\Gamma[(\phi_y^1)_{1,r_1},e]=0$. Combing this with $(\phi^1_y)_{1,r_1}=-\st[a^1_{1,r_1},\st e]$, we obtain $(a^1_{1,r_1})^0=0$. By an induction on the integer $r_1$, (1) is proved.
					
					(b) For $r_1\geq 2$, consider the $\MO((\log y)^{r_1})$ and $\MO((\log y)^{r_1})$ coefficients of the second equation of \eqref{sigma=1projectionequation}, we obtain
					\begin{equation}
					\begin{split}
					&b^1_{1,r_1}=-\st[e,b^1_{1,r_1}],\\
					&r_1b^1_{1,r_1}+b^1_{1,r_1-1}=-\st[e,b^1_{1,r_1-1}].
					\end{split}
					\end{equation}
					The first equation implies $b^1_{1,r_1}=0$, where as $b^1_{1,r_1-1}+\st[e,b^1_{1,r_1-1}]\notin V^+$, from the second equation we obtain $b_{1,r_1}^1=0$. 
					
					(c) Consider the $\MO(\log y)$ and $\MO(1)$ coefficient equations for the "$b$" terms, we obtain
					\begin{equation}
					\begin{split}
					&b^{1}_{1,1}+\st[e,b_{1,1}^1]=0,\\
					&b_{1,1}^1+b_1^1+\st[e,b_1^1]=\st F_\om,\\&-a_1^1+\st[e,a_1^1]=0,\;[a_1^1,\st e]=0.
					\label{B11}
					\end{split}
					\end{equation}
					The first equation implies $b_{1,1}^1\in V^+$. From the second equation, we obtain that $b_{1,1}^1$ is determined by the $V^+$ part of $\st F_\om$. However, for any $c^+\in V^+$, $b_1^1+c^+$ also satisfies  $$b_{1,1}^1+b_1^1+c^++\st[e,b_1^1+c^+]=\st F_\om.$$ Note that $b_1^1$ can not be determined by these algebraic equations.
				\end{proof}
				
				Now, suppose the leading orders of $(a,b)$ is $y$ instead of $y(\log y)$, we compute the $\OO(1)$ coefficients of equations $(\ref{se}):b_1^1+\st[e,b_1^1]=\st F_\om$ and obtain the following theorem:
				\begin{theorem}
					\label{Einstein2}
					If the sub-leading term $(a,b)$ of a polyhomegenous solution is $C^1$ to the boundary, then $X$ is an Einstein 3-manifold.
				\end{theorem}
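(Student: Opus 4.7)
The plan is to read the theorem as an immediate consequence of Proposition \ref{Einstein1}(c), once the hypothesis is properly translated into vanishing of a specific $\log$-coefficient. The key observation is that if the subleading term of $(a,b)$ is $C^1$ up to $y=0$, then no $y(\log y)^p$ terms with $p\ge 1$ can appear at the first subleading order, since $y\log y$ already fails to be $C^1$ at $y=0$. In particular, $b^{1}_{1,1}=0$, and by Proposition \ref{Einstein1}(a)--(b) all other $\log$-coefficients at order $y$ vanish as well.

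With $b^{1}_{1,1}=0$ in hand, I would invoke Proposition \ref{Einstein1}(c), which identifies $b^{1}_{1,1}$ with the $V^{+}_1$ component of $\star F_\omega$. Thus $(\star F_\omega)^{+}=0$. For a self-contained derivation one may also proceed as suggested at the end of the excerpt: absent the $\log$ term, the $\mathcal{O}(1)$ coefficient of the second equation in \eqref{sigma=1projectionequation} reduces to $b^{1}_{1}+\star[e,b^{1}_{1}]=\star F_\omega$, and by Proposition \ref{actionofL} the operator $\mathrm{Id}+\star[e,\cdot]$ acts on $V^{-}_1,V^{0}_1,V^{+}_1$ with eigenvalues $3,2,0$ respectively. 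Solvability of this equation therefore forces $(\star F_\omega)^{+}=0$.

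To finish, I would recall the identification stated just before the subsection: under the decomposition $\star F_\omega=(\star F_\omega)^{-}+(\star F_\omega)^{+}$, the $V^{+}$ part is precisely the traceless Ricci tensor of $X$ (while the $V^{-}$ part encodes the scalar curvature). In dimension three the full Riemann curvature is determined by the Ricci tensor, so $(\star F_\omega)^{+}=0$ is exactly the statement that the traceless Ricci vanishes, i.e.\ $X$ is Einstein.

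The argument is short because Proposition \ref{Einstein1} has already done the real work. The only point that requires any care is the translation of the $C^1$ regularity hypothesis into the algebraic statement $b^{1}_{1,1}=0$; one must be sure that a $C^1$ assumption on the subleading term excludes all of the $y(\log y)^p$ contributions at that order, and this follows directly from Proposition \ref{Einstein1}(a)--(b) which already rules out all such terms except possibly $b^{1}_{1,1}$, the remaining obstruction to $C^1$ regularity.
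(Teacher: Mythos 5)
Your proposal is correct and follows essentially the same route as the paper: translate the $C^1$ hypothesis into $b^1_{1,1}=0$, read off from the $\mathcal{O}(1)$ equation $b^1_1+\star[e,b^1_1]=\star F_\omega$ that the $V^+_1$ (kernel) component of $\star F_\omega$ must vanish, and identify that component with the traceless Ricci tensor. The extra care you take in justifying that $C^1$ regularity kills all the $y(\log y)^p$ coefficients is a welcome but minor elaboration of the paper's one-line version of that step.
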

				\begin{proof}
					If $(a,b)$ is $C^1$, we obtain $b_{1,1}^1=0$. By \eqref{B11}, we obtain $b_1^1+\st[e,b_1^1]=\st F_\om$. By Proposition \ref{actionofL}, this implies $F_\om\notin V^+$. In addition, by Proposition \ref{Levi-Civita}, we obtain that $\om$ is the Levi-Civita connection, thus $F_\omega\in V^-\oplus V^0$ which implies $X$ is Einstein.
				\end{proof}
				
				Now, we will determine the next order of the expansions. We have the following descriptions of the "$y^2$" order coefficients:
				\begin{proposition} 
					\label{34prop}
					Under the previous notation, we have :
					
					(a) for $p\geq 0$, $b^1_{2,p}=0$,
					
					(b) for $p\geq 2$, $a^1_{2,p}=(\phi_y^1)_{2,p}=0$,
					
					(c) $(a^1_{2,1})^-=0$, $(a_{2,1}^1)^+=\frac{1}{3}(\st d_\om b^1_{1,1})^+$ and $(a^1_{2,1})^0=\frac{1}{3}(\st d_\om b^1_{1,1})^0$, $(\phi_y^1)_{2,1}=\frac{1}{3}d_\om^\st b^1_{1,1}$,
					
					(d)there exist $c^0\in V^0$, $c^-\in V^-$, such that we can write 
					
					\begin{equation}
					\begin{split}
					&a^1_2=-\frac{1}{9}(\st d_\om b^1_{1,1})^++\frac{1}{3}(\st d_\om b^1_1)^++c^0+c^--\frac 13 (\st d_\om b^1_{1,1})^0+(\st d_\om b^1_1)^0,\\
					&(\phi^1_y)_2=-\frac{1}{2}\Gamma c_0.
					\end{split}
					\end{equation}
				\end{proposition}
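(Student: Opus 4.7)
The approach parallels that of Proposition~\ref{Einstein1}, carried out one order higher in $y$: substitute the polyhomogeneous ansatz into the projected flow equations~\eqref{sigma=1projectionequation}, equate coefficients of $y^1(\log y)^p$ for each $p$, and solve the resulting linear-algebraic systems by descending induction on $p$. Throughout, all the nonlinear terms on the right hand sides of~\eqref{sigma=1projectionequation} begin at order $y^2$, so they contribute nothing to the $y^1(\log y)^p$ coefficients. For~(a), after invoking $a^1_{1,p}=(\phi_y^1)_{1,p}=0$ from Proposition~\ref{Einstein1}(a), the second equation collapses to $2b^1_{2,p}+\st[e,b^1_{2,p}]=-(p+1)b^1_{2,p+1}$; by Proposition~\ref{actionofL} this operator has eigenvalues $4,3,1$ on $V^-_1,V^0_1,V^+_1$, hence it is invertible, and descending induction from the top log power forces $b^1_{2,p}=0$.

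For~(b), once (a) is in place the $y^1(\log y)^p$ coefficients of the first and third equations read
\begin{align*}
2a^1_{2,p}+(p+1)a^1_{2,p+1}&=\st d_\om b^1_{1,p}+\st[e,a^1_{2,p}]+[(\phi_y^1)_{2,p},e],\\
2(\phi_y^1)_{2,p}+(p+1)(\phi_y^1)_{2,p+1}&=d_\om^\st b^1_{1,p}-\Gamma a^1_{2,p}.
\end{align*}
For $p\geq 2$, Proposition~\ref{Einstein1}(b) gives $b^1_{1,p}=0$, while for $p=1$ Proposition~\ref{Einstein1}(c) puts $b^1_{1,1}\in V^+_1$; pairing consecutive levels $(p,p-1)$ therefore matches the hypotheses of Lemma~\ref{VanishingLemma} with $\si=1$ and $r=p$, which forces $a^1_{2,p}=(\phi_y^1)_{2,p}=0$ by descending induction down to $p=2$. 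Projecting the $p=1$ first equation onto $V^+_1$ then yields the first assertion of~(c), $(a^1_{2,1})^+=\tfrac13(\st d_\om b^1_{1,1})^+$; its $V^-_1$ projection is vacuous, because $b^1_{1,1}\in V^+_1$ and $\st d_\om V^+_1\subset V^0_1\oplus V^+_1$ by Proposition~\ref{deaction}.

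To pin down $(a^1_{2,1})^-$ one descends to the $V^-_1$ projection of the $p=0$ first equation, which reduces to $(a^1_{2,1})^-=(\st d_\om b^1_1)^-$. The key geometric input here is that for the Levi-Civita connection on a $3$-manifold $(\st F_\om)^0=0$, so by Proposition~\ref{Einstein1}(c), $b^1_1\in V^+_1\oplus V^-_1$; Proposition~\ref{deaction} then gives $(\st d_\om b^1_1)^-=0$, and hence $(a^1_{2,1})^-=0$. The remaining pair $((a^1_{2,1})^0,(\phi_y^1)_{2,1})$ forms, at the $p=1$ level, a degenerate system corresponding to the excluded value $\lam=\si+1=2$ of Lemma~\ref{vanishinglemma2}, so it must be coupled with the $V^0_1$ projections of the $p=0$ first and third equations. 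Using the identities $\Gamma[e,\cdot]=2\,\mathrm{id}$ and $\Gamma\,\st d_\om=d_\om^\st$ from Proposition~\ref{identity} to eliminate $(a^1_2)^0$ and $(\phi_y^1)_2$ from the coupled four-equation system breaks the degeneracy and leaves $3(\phi_y^1)_{2,1}=d_\om^\st b^1_{1,1}$, from which the remaining formulas of~(c) follow. Substituting these values back into the $p=0$ first and third equations then produces~(d): the $V^+_1$ projection of (E1-$0$) fixes $(a^1_2)^+$, the $V^-_1$ projection is trivial so $c^-:=(a^1_2)^-$ is free, and the $V^0_1$ subsystem is again $\lam=2$ degenerate, yielding the one-parameter family parametrized by $c^0\in V^0_1$ with $(\phi_y^1)_2=-\tfrac12\Gamma c^0$. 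The principal obstacle is precisely this $\lam=\si+1=2$ degeneracy at $\si=1$, which prevents a direct application of Lemma~\ref{vanishinglemma2} and forces the coupling of the $p=0$ and $p=1$ equations through the $\Gamma$-identities in order to separate the uniquely determined coefficients from the genuine free parameters $c^0,c^-$.
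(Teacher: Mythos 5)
Your proposal is correct and follows essentially the same route as the paper: match coefficients of $y(\log y)^p$ in \eqref{sigma=1projectionequation}, use the eigenvalues of $L$ and Lemma \ref{VanishingLemma} by descending induction on the log power, and resolve the degenerate $V^0_1$ sector by coupling the $p=1$ and $p=0$ levels through the $\Gamma$-identities of Proposition \ref{identity}. Your treatment of $(a^1_{2,1})^-=0$ via $(\st F_\om)^0=0$ and Proposition \ref{deaction} is in fact spelled out more carefully than in the paper's own (rather terse) argument, and the verified identity $3(\phi_y^1)_{2,1}=d_\om^\st b^1_{1,1}$ agrees with the stated coefficients.
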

				\begin{proof}
					(a) For $r_2\geq 0$, consider the $\MO(y(\log y)^{r_2})$ coefficients of \eqref{se}, we obtain $2b_{2,r_2}^1=-\st[e,b_{2,r_2}^1]$, which implies $b_{2,r_2}^1=0$. By induction, we obtain $b_{2,p}^1=0$ for any $p\geq 0$. 
					
					(b) Now, we will consider the $\phi_y$ and $a$ parts. For $r_2\geq 2$, consider the $\MO(y(\log y)^{r_2})$ and $\MO((y\log y)^{r_2-1})$ coefficients of \eqref{se}, the quadratic terms don't contribute to this order and we obtain \begin{equation}
					\begin{split}
					&2a_{2,r_2}^1=\st[e,a_{2,r_2}^1]-[e,(\phi_y^1)_{2,r_2}],\\
					&2(\phi_y^1)_{2,r}=-\st[a^1_{2,r_2},\st e],\\
					&ra^1_{2,r_2}+2a_{2,r_2-1}^1=\st[e,a^1_{2,r_2-1}]+[(\phi_y^1)_{2,r_2-1},e],\\&r_2(\phi_y^1)_{2,r_2}+2(\phi_y^1)_{2,r_2-1}=-\st[a^1_{2,r_2-1},\st e].
					\end{split}
					\end{equation}
					By Lemma \ref{VanishingLemma}, we obtain $a^1_{2,r_2}=0$. 
					
					For statement (c) and (d), we will first determine the $V^+$ part of the coefficients. 
					
					Consider the $\MO(y\log y)$ coefficients, we obtain
					\begin{equation}
					\begin{split}
					&2a^1_{2,1}=\st d_\om b^1_{1,1}+\st[e,a^1_{2,1}]-
					[e,(\phi^1_y)_{2,1}],\\
					&2(\phi^1_y)_{2,1}=d^{\st}_\om b^1_{1,1}-\Gamma a^1_{2,1}.
					\end{split}
					\label{18}
					\end{equation}
					The $V^+$ projection of the first equation gives $(a^{1}_{2,1})^+=\frac13 (\st d_\om b^1_{1,1})^+$. As $b^1_{1,1}\in V^+$, $d_\om b^1_{1,1}\in V^+\oplus V^0.$ The $V^-$ projection of the second equation gives $(a^{1}_{2,1})^-=0.$  
					
					From the $\MO(y)$ coefficients, we obtain
					\begin{equation}
					\begin{split}
					&2a^1_2+a^1_{2,1}=\st d_\om b^1_1+\st[e,a^1_2]-[e,(\phi^1_y)_2],\\
					&(\phi_y^1)_{2,1}+2(\phi_y^1)_{2}=d_\om^\st b^1_1-\st[a^1_2,\st e].
					\end{split}
					\label{19}
					\end{equation}
					The $V^+$ projection of the first equation gives $a^1_2:=\frac{1}{3}(\st d_\om b^1_1)^+-\frac{1}{9}(\st d_\om b^1_{1,1})^+.$ We cannot determine the $V^-$ part of the $a^1_2$. from these algebraic equations.
					
					Now, consider the $V^0$ projection of \eqref{18} and \eqref{19}, we obtain
					\begin{equation}
					\begin{split}
					&(a_{2,1}^1)^0=(\st d_\om b_{1,1}^1)_0-[e,(\phi_y^1)_{2,1}],\\
					&(a_2^1)^0+(a^1_{2,1})^0=(\st d_\om b^1_1)^0-[e,(\phi_y^1)_2],\\
					&(\phi^1_y)_{2,1}+2(\phi_y^1)_2=d^{\st}_\om b^1_1-\Gamma a^1_2.
					\end{split}
					\end{equation}
					Using $[e,\;]$ acts on the third equation and combing with the first equation, we obtain
					\begin{equation}
					\begin{split}
					&2(a_2^1)^0-(a_{2,1}^1)^0=2(\st d_\om b^1_1)^0-2[e,(\phi_y^1)_2]-(\st d_\om b^1_{1,1})^0,\\
					&2(a_2^1)^0+2(a_{2,1}^1)^0=2(\st d_\om b_1^1)^0-2[e,(\phi_y^1)_2].
					\end{split}
					\end{equation}
					Thus, we obtain $(a_{2,1}^1)^0=\frac{1}{3}(\st d_\om b^1_{1,1})^0$. Using \eqref{18}, we compute
					\begin{equation}
					\begin{split}
					(\phi_y^1)_{2,1}=\frac12 d_\om^\st b^1_{1,1}-\frac 12 \Gamma a^1_{2,1}=\frac{1}{2}d_\om^\st b^1_{1,1}-\frac{1}{6}\Gamma(\st d_\om b^1_{1,1})=\frac 13 d_\om^\st b^1_{1,1}.
					\end{split}
					\end{equation}
					Now, we write $(a^1_2)^0=c^0+(\st d_\om b^1_1)^0-\frac 13 (\st d_\om b^1_{1,1})^0$, then we compute
					\begin{equation}
					\begin{split}
					(\phi_y^1)_2&=-\frac{1}{2}(\phi^1_y)_{2,1}+\frac{1}{2}d^{\st}_\om b^1_1-\frac{1}{2}\Gamma a^1_2\\
					&=-\frac{1}{6}(d^\st_\om b^1_{1,1})^0+\frac12 d_\om^\st b^1_1-\frac{1}{2}\Gamma a^1_2\\
					&=-\frac{1}{2}\Gamma c^0.
					\end{split}
					\end{equation}
				\end{proof}
				
				\begin{comment}
				In summary, we obtain:
				\begin{proposition}
				\label{lowerorders}
				(1) $a_{2,p}^1=0$ for $p\geq 2$ and $b_{2,p}^1=0$ for any $p\geq 0$,
				
				(2) The expansions of $a,b$ on a general 3-manifold can be expressed as
				$$a^1\sim y^2\log y a_{2,1}^1+y^2a_2^1+\MO(y^{\frac{5}{2}}),\;b^1\sim y\log yb_{1,1}^1+yb_1^1+\MO(y^{\frac{5}{2}}),$$
				
				(3) $a_{2,1}^1$ is determined by $b_{1,1}^1$ and $b_1^1$. If $b_{1,1}^1=0$, then $a_{2,1}^1=0$.
				\end{proposition}
				\proof By previous arguments, we only to prove the last claim of (3) in the proposition. If $b_{1,1}^1=0$, by Proposition \ref{Einstein1} and Theorem \ref{Einstein2}, we obtain $X$ is an Einstein manifold. As every Einstein 3-manifold has constant sectional curvature, we have $\st F_\om=Ce$ for some constant $C$. Recall the equations $b_1^1+\st[e,b_1^1]=\st F_\om,$ we obtain $b_1^1=\frac{C}{3}e+\theta$ for $\theta\in V^+.$ When $b^1_{1,1}=0$, the algebraic equation for $a_{2,1}^1$ becomes $2a_{2,1}^1=\st[e,a_{2,1}^1]$, which implies $a_{2,1}^1\in V^-$. As $ 2a_2^1+a_{2,1}^1=\st[e,a_2^1]+\st d_{\om}b_{1}^1$, therefore $a_{2,1}^1$ will be determined by $V^-$ part of $\st d_\om b_1^1$. By Proposition \ref{Levi-Civita}, $\st d_\om b_1^1=\frac C3\st d_\om e+\st d_\om \theta=\st d_\om \theta.$ As $\theta\in V^+$, by Proposition \ref{deaction}, $\st d_\om \theta\in V^0\oplus V^+$. Thus $a_{2,1}^1=0$.
				\qed
				\end{comment}
				
			\end{subsubsection}
			
			\begin{subsubsection}{$\si>1$}
				Now, we will study the coefficients of the expansions when $\si>1$.
				
				Under the projection $\MP_\si:\Omega^1(\gpp)\to V^\si$ of \eqref{se}, as $\MP_\si\st F_\om=0$ for $\si\neq 1$, we obtain
				\begin{equation}
				\begin{split}
				&\partial_y a^{\si}=\st d_\om b^{\si}+y^{-1}\st[e,a^{\si}]+y^{-1}[\phi_y,e]+\MP_\si(\st[a,b]+[\phi_y,b]),\\
				& \partial_yb^{\si}=-y^{-1}\st[e,b^{\si}]+\st d_\om a^{\si}+d_\om \phi_y+\MP_\si([a,\phi_y]+\st a\wedge a-\st b\wedge b),\\
				&\pa_y \phi_y^\si=d^\st_\om b^{\si}-y^{-1}\st[a^{\si},\st e]-\MP_\si(\st[a,\st b]).
				\label{sigmaprojectionequation}
				\end{split}
				\end{equation}
				
				Let ${\si_1,\cdots,\si_N}$ be the possible indices of \eqref{decomposition1form} with $\si_i>1.$ We assume $a^{\si_i},\;b^{\si_i},\;\phi_y^{\si_i}$ have the expansions with leading terms: 
				\begin{equation}
				\begin{split}
				&a^{\si_i}\sim \sum_{p=1}^{r_i}a_{\lam_i,p}^{\si_i} y^{\lam_i}(\log y)^p+\cdots,\;b^{\si_i}\sim \sum_{p=1}^{r_i}b_{\lam_i,p}^{\si_i} y^{\lam_i}(\log y)^p+\cdots,\\
				&\phi_y^{\si_i}\sim \sum_{p=1}^{r_i}(\phi_y^{\si_i})_{\lam_i,p}y^{\lam_i}(\log y)^p+\cdots.
				\end{split}
				\end{equation} 
				
				Relabel these indices, we can assume $1\leq\lam_1\leq\lam_2\leq \cdots \leq\lam_N$. As we only care about the leading asymptotic behaviors, we can assume $\lam_i\leq \si_i$.
				
				\begin{proposition}
					\label{sib1}
					(a) For $\si>1$, the expansions of $a^{\si}$, $b^{\si}$ and $\phi_y^{\si}$ are
					\begin{equation*}
					a^{\si}\sim y^{\si+1}a^{\si}_{\si+1}+\MO(y^{\si+\frac32}),\;b^{\si}\sim y^{\si}b^{\si}_{\si}+\MO(y^{\si+\frac 12}),\;\phi_y^\si\sim y^{\si+1}(\phi_y^{\si})_{\si+1}+\MO(y^{\si+\frac 32})
					\end{equation*}
					where we write $a^{\si}_{\si+1}:=a^{\si}_{\si+1,0}$, $b^{\si}_{\si+1}:=b^{\si}_{\si+1,0}$ and $(\phi_y^{\si})_{\si+1,0}:=(\phi_y^{\si})_{\si+1}$.
					
					(b) there exist $c^0_{\si}\in V^0_\si,\;c^-_{\si}\in V^-_\si,\;c^+_{\si}\in V^+_\si$, such that we can write the leading coefficients of the expansions of $a^\si,b^\si,\phi_y^\si$ as 
					\begin{equation}
					\begin{split}
					&(\phi_y^\si)_{\si+1}=c^0_{\si_i},\;b^\si_\si=c^+_\si,\\
					&a^\si_{\si+1}=c_\si^-+\frac{1}{2\si+1}(\st d_\om c^+_\si)^++(\st d_\om b^\si_\si)^0+[c^0_\si,e]
					\end{split}
					\end{equation}
				\end{proposition}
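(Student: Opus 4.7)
The plan is to project the flow equations \eqref{se} onto $V_\si$ (and $\Omega^0(\tau_\si)$) via $\MP_\si$ to obtain \eqref{sigmaprojectionequation}, then match powers of $y$ order by order, exploiting the eigenstructure of $L=\st[e,\cdot]$ recorded in Proposition \ref{actionofL}. The key simplification for $\si>1$ is $\MP_\si\st F_\om=0$, so unlike the $\si=1$ case there is no constant external source, and the only forcing in the leading-order analysis comes from $b^\si$ itself.

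For $b^\si$, substituting $b^\si\sim y^{\lam_b}b^\si_{\lam_b}$ into the second equation of \eqref{sigmaprojectionequation}, the assumption $\lam_i\leq\si_i$ makes the nonlinear terms and the $\st d_\om a^\si$, $d_\om\phi_y^\si$ contributions subleading, leaving the homogeneous indicial equation $\lam_b b^\si_{\lam_b}=-\st[e,b^\si_{\lam_b}]$. Since $L$ has eigenvalues $\si+1,1,-\si$ on $V_\si$, the only positive-integer root is $\lam_b=\si$ with $b^\si_\si\in V^+_\si$, so I set $c^+_\si:=b^\si_\si$. For the coupled pair $(a^\si,\phi_y^\si)$, the source terms $\st d_\om b^\si$ and $d_\om^\st b^\si$ enter only at order $y^\si$, so below $y^{\si+1}$ the system is homogeneous; Corollary \ref{isom} eliminates the $V^\pm_\si$ components for $\lam<\si+1$, Lemma \ref{vanishinglemma2} handles the coupled $V^0_\si$--$\Omega^0(\tau_\si)$ block at non-resonant $\lam$, and Lemma \ref{VanishingLemma} inductively kills the associated log coefficients, establishing part (a).

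At the resonant order $y^{\si+1}$ the algebraic system reads
\begin{equation*}
(\si+1)a^\si_{\si+1}=\st[e,a^\si_{\si+1}]+[(\phi_y^\si)_{\si+1},e]+\st d_\om c^+_\si,\qquad (\si+1)(\phi_y^\si)_{\si+1}=-\Gamma a^\si_{\si+1}+d_\om^\st c^+_\si.
\end{equation*}
Splitting $a^\si_{\si+1}$ into $V^-,V^0,V^+$ blocks: the $V^+$ equation is non-resonant and yields $(a^\si_{\si+1})^+=\tfrac{1}{2\si+1}(\st d_\om c^+_\si)^+$; the $V^-$ equation is exactly resonant with $L=\si+1$, producing the free parameter $c^-_\si\in V^-_\si$; the coupled $V^0$ block, after using the identities $\Gamma(\st d_\om a)=d_\om^\st a$ and $\Gamma[e,\cdot]=2(\cdot)$ from Proposition \ref{identity}, is parametrized by $c^0_\si:=(\phi_y^\si)_{\si+1}$, giving the displayed formula.

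The main obstacle lies in the second step: the coupled system on $V^0_\si$--$\Omega^0(\tau_\si)$ has a genuine indicial root at $\lam=2$, which for $\si\geq 2$ sits strictly below $\si+1$ and threatens to produce an earlier leading term not killed by Lemma \ref{vanishinglemma2}. Ruling this out requires an inductive argument propagating through orders $y^2,\ldots,y^\si$: at each intermediate order one must combine the absence of sources (from $\MP_\si\st F_\om=0$ and the vanishing of lower coefficients) with the Clebsch--Gordan constraints of Proposition \ref{ClebschGordan} on the cross-$\si$ nonlinear couplings to force the putative $y^2$ coefficients to vanish.
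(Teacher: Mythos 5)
Your overall strategy is the paper's: project onto $V_\si$ using $\MP_\si\st F_\om=0$, read off the indicial equation $\si b^\si_\si=-\st[e,b^\si_\si]$ to place the leading term of $b^\si$ in $V^+_\si$ at order $y^\si$, split the coupled $(a^\si,\phi_y^\si)$ system into its $V^\pm_\si$ parts and the $V^0_\si$--$\Omega^0(\tau_\si)$ block, control the quadratic cross-terms by Proposition \ref{ClebschGordan} together with an induction over the $\si_i$ ordered by leading exponent, and finally solve the resonant order-$y^{\si+1}$ system for part (b). Your identities $(a^\si_{\si+1})^+=\frac{1}{2\si+1}(\st d_\om c^+_\si)^+$ and the identification of $c^-_\si$ with the kernel of the resonant $V^-_\si$ equation match the paper's.

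The gap is in your final paragraph, and the repair you sketch does not work. You are right that the coupled block $(\lam-1)a^0=-[e,\phi_y]$, $\lam\phi_y=-\Gamma a^0$ has the indicial root $\lam=2$ (exactly the case excluded from Lemma \ref{vanishinglemma2}), which for $\si\geq 2$ sits strictly below $\si+1$. But at a resonant exponent the homogeneous system has a genuine kernel: any $a^0\in V^0_\si$ together with $\phi_y=-\tfrac12\Gamma a^0$ solves it, since $[e,\Gamma a]=2(a)^0$. ``Absence of sources'' is precisely the situation in which such a kernel element survives undetermined, and the Clebsch--Gordan constraints only bear on the quadratic terms, which are already of higher order at $y^2$; neither mechanism can force the coefficient of this mode to vanish. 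So order-by-order matching of the flow equations alone cannot establish part (a) for $\si\geq 2$: excluding a $y^2$ leading term of $a^\si$ in $V^0_\si$ (with the accompanying $y^2$ term of $\phi_y^\si$) requires input beyond the formal expansion --- in effect the indicial-root analysis of the gauge-fixed linearized operator in \cite{MazzeoWitten2013}, which underlies Theorem \ref{expansionexists} and determines which formal modes actually occur in a Nahm pole solution. (The paper's own proof applies Lemma \ref{vanishinglemma2} at $\lam_k$ without checking $\lam_k\neq 2$, so it is silent on the same point; but your sketch, as written, does not close it.)
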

				\begin{proof}
					
					We will show $a^{\si_i}_{\lam_i+1,p}=b^{\si_i}_{\lam_i,p}=0$ for any $\lambda_i<\si_i$ and $p\geq 1$. We prove by induction on the index $i$. As when $i=1$, the proof is straight forward and we omit the proof.
					
					Suppose for any $i\leq k-1$, $a^{\si_i}_{\lam_i+1,p}=b^{\si_i}_{\lam_i,p}=(\phi_y^{\si_i})_{\lam_i+1}=0$ for any $\lambda_i<\si_i$ and $p\geq 1$, in other word, $\si_i=\lam_i$. Then for $i=k$, consider the quadratic terms of \eqref{sigmaprojectionequation}, by Cebsch-Gordan coefficients \cite{Kostant59} and the induction assumption, for some positive integer $p$, we have 
					\begin{equation}
					\begin{split}
					&\MP_\si(\st[a,b]+\st[\phi_y,b])\in\MO(y^{\si_k+1}(\log y)^p),\;\MP_\si(\st[a,\st b])\in \MO(y^{\si_k+1}(\log y)^p),\\
					&\MP_\si(\st a\we a-\st b\we b+[a,\phi_y])\in\MO(y^{\si_k+1}(\log y)^p).
					\end{split}
					\end{equation}
					Next, we consider the coefficients of $\MO(y^{\lambda_k-1}(\log y)^p)$ and $\MO(y^{\lam_k}(\log y)^p)$. When $\lam_k\leq \si_k$, the quadratic terms will not influence our following discussions.
					
					Consider the $\MO(y^{\lam_k-1}(\log y)^{r_k})$ and $\MO(y^{\lam_k}(\log y)^{r_k-1})$ order coefficients of the first equation of \eqref{sigmaprojectionequation}, we obtain
					\begin{equation}
					\begin{split}
					\lam_ka^{\si_k}_{\lam_k,r_k}=\st[e,a^{\si_k}_{\lam_k,r_k}]+[(\phi_y^{\si_k})_{\lam_k,r_k},e],\;(\phi_y^{\si_k})_{\lam_k,r_k}=-\Gamma a^{\si_k}_{\lam_k,r_k}.
					\end{split}
					\end{equation} 
					Projecting the first equation into $V^+$ and $V^-$ part, for $\lam_k\leq \si_k$ and any $r_k$, $(a^{\si_k}_{\lam_k,r_k})^{\pm}=0$.
					
					Consider the $V^0$ part of the equation, we obtain
					\begin{equation}
					\begin{split}
					(\lam_k-1)(a^{\si_k}_{\lam_k,r_k})_0=[(\phi_y^{\si_k})_{\lam_k,r_k},e],\;(\phi_y^{\si_k})_{\lam_k,r_k}=-\Gamma a^{\si_k}_{\lam_k,r_k}.
					\end{split}
					\end{equation}
					Applying Lemma \ref{vanishinglemma2}, we obtain $a^{\si_k}_{\lam_k,r_k}=0$.
					
					Consider the $\MO(y^{\lam_k-1}(\log y)^{r_k})$ and $\MO(y^{\lam_k-1}(\log y)^{r_k-1})$ order coefficients of the second equation of  \eqref{sigmaprojectionequation}, for $r_k\geq 1$, we obtain
					\begin{equation}
					\begin{split} \lam_kb^{\si_k}_{\lam_k,r_k}=-\st[e,b^{\si_k}_{\lam_k,r_k}],\;r_kb^{\si_k}_{\lam_k,r_k}+\lam_kb^{\si_k}_{\lam_k,r_{k-1}}=-\st[e,b^{\si_k}_{\lam_k,r_{k-1}}].
					\end{split}
					\end{equation}
					Thus, $\lam_k=\si_k$ and $b^{\si_k}_{\lam_k,r_k}=0$ for $r_k\geq 1$. The $\MO(y^{\si_k})$ order coefficients give $\si_kb^{\si_k}_{\si_k}=-\st[e,b^{\si_k}_{\si_k}]$, thus $b^{\si_k}_{\si_k}\in V^{+}$ and it might be non-zero. 
					
					Now, we can assume $a^{\si_k},\phi_y^{\si_k}$ have the leading expansions 
					\begin{equation}
					\begin{split}
					a^{\si_k}\sim \sum_{p=0}^{r_k}y^{\lam_i+1}(\log y)^p a^{\si_k}_{\lam_i+1,p}+\cdots,\; \phi^{\si_k}_y\sim \sum_{p=0}^{r_k} y^{\si_k+1}(\log y)^p(\phi_y^{\si_k})_{\si_{k}+1,p}  , 
					\end{split}
					\end{equation}
					where for simplification, we also denote the highest order of the $"\log y"$ terms as $r_k$.
					
					For $\lam_k<\si_k$ and $r_k\geq 0$ or $\lam_k=\si_k$ and $r_k>1$, consider the $\MO(y^{\lam_k}(\log y)^{r_k}),\MO(y^{\lam_k}(\log y)^{r_k-1})$ order coefficients of \eqref{sigma=1projectionequation}, we obtain
					\begin{equation}
					\begin{split}
					&(\lam_k+1)a^{\si_k}_{\lam_k+1,r_k}=\st[e,a^{\si_k}_{\lam_k+1,r_k}]+[(\phi_y^{\si_k})_{\lam_k+1,r_k},e],\\
					&(\lam_k+1)\phi^{\si_k}_{\lam_k+1,r_k}=-\st[a^{\si_k}_{\lam_k+1,r_k},\st e],\\
					&r_ka^{\si_k}_{\lam_k+1,r_k}+(\lam_k+1)a^{\si_k}_{\lam_k+1,r_k-1}=\st[e,a^{\si_k}_{\lam_k+1,r_k-1}]+[(\phi_y)^{\si_k}_{\lam_k+1,r_k-1},e],\\
					&r_k(\phi_y^{\si_k})_{\lam_k+1,r_k}+(\lam_k+1)(\phi_y^{\si_k})_{\lam_k+1,r_k-1}=-\st[a^{\si_k}_{\lam_k+1,r_k-1},\st e].
					\end{split}
					\end{equation}
					By Lemma \ref{VanishingLemma}, we obtain $a^{\si_k}_{\lam_k+1,r_k}=(\phi_y^{\si_k})_{\lam_k+1,r_k}=0$.
					
					When $\lam_k=\si_k$ and $r_k=1$, we obtain
					\begin{equation}
					\begin{split}
					&(\si_k+1)a^{\si_k}_{\si_k+1,1}=\st[e,a^{\si_k}_{\si_k+1,1}]+[(\phi_y^{\si_k})_{\lam_k+1,r_k},e],\\
					&(\lam_k+1)\phi^{\si_k}_{\si_k+1,r_k}=-\st[a^{\si_k}_{\si_k+1,r_k},\st e],\\
					&(\si_k+1)a_{\si_k+1}^{\si_k}+a^{\si_k}_{\si_k+1,1}=\st d_\om b^{\si_k}_{\si_k}+\st[e,a^{\si_k}_{\si_k+1}],\\
					&r_k(\phi_y^{\si_k})_{\si_k+1,r_k}+(\lam_k+1)(\phi_y^{\si_k})_{\si_k+1,r_k-1}=-\st[a^{\si_k}_{\si_k+1,r_k-1},\st e].
					\end{split}
					\end{equation}
					Applying Lemma \ref{VanishingLemma}, we obtain $a^{\si_k}_{\si_k+1,r_k}=(\phi_y^{\si_k})_{\lam_k+1,r_k}=0$ for $r_k\geq 1.$ This completes the first half of the proposition. 
					
					For the second half of the proposition, we can assume for $\si>1$ and we have the expansions
					\begin{equation*}
					a^{\si}\sim y^{\si+1}a^{\si}_{\si+1}+\MO(y^{\si+\frac32}),\;b^{\si}\sim y^{\si}b^{\si}_{\si}+\MO(y^{\si+\frac 12}),\;\phi_y^\si\sim y^{\si+1}(\phi_y^{\si})_{\si+1}+\MO(y^{\si+\frac 32}).
					\end{equation*}
					Using \eqref{sigmaprojectionequation}, the leading coefficients satisfies the following equations:
					\begin{equation}
					\begin{split}
					&(\si+1)a^{\si}_{\si+1}=\st d_\om b^\si_\si+\st[e,a^\si_{\si+1}]+[(\phi_y^\si)_{\si+1},e],\\
					&\si b^\si_\si=-\st [e,b^\si_\si],\\
					&(\si+1)(\phi_y^\si)_{\si+1}=d_\om^\st b^\si_\si-\st[a^\si_{\si+1},e].
					\end{split}
					\end{equation}
					The claim follows immediately. 
				\end{proof}

				\begin{remark}
					Even by Proposition \ref{sib1}, the leading terms in the expansions of $a^{\si},b^{\si}$ don't have "$\log y$" terms, the $\log$ terms might still appear in the rest terms of the expansion. By Proposition \ref{ClebschGordan}, the expansions of $a^1,b^1$ will contribute to the expansions of $a^{\si},b^{\si}$ from the quadratic terms and the "$\log y$" terms might come from it.
				\end{remark}
			\end{subsubsection}
		\end{subsection}
		\begin{subsection}{Formal Expressions of Higher Order Terms}
			In this section, we will give formal expressions of Higher order terms. We write the expansions of $a,b,\phi_y$ as $$a\sim \sum a_{k,p}y^k(\log y)^p,\;b\sim\sum b_{k,p}y^k
			(\log y)^p,\;\phi_y\sim\sum b_{k,p}y^k
			(\log y)^p$$ 
			and write $a_{k,p}^{\si}:=\MP_{\si}a_{k,p},\;b_{k,p}^{\si}:=\MP_{\si}b_{k,p}$. For each order of $k$, there will be only finite number of $p$ such that $a^{\si}_{k,p},\;b^{\si}_{k,p}$ and $(\phi_y^\si)_{k,p}$ non-vanishing. We obtain the following proposition:
			\begin{proposition}
				\label{prop36}
				For any integer $\si$, $p\geq 0$ and $k\geq \si+1$, $a^{\si}_{k+1,p},\;b^{\si}_{k,p},\;(\phi_y^\si)_{k,p}$ are determined by $\{a^{\si_i}_{\si_i+1},b^{\si_i}_{\si_i+1},(\phi_y^{\si_i})_{\si_i+1}\}$ for all possible integers $\si_i$ in the decomposition \eqref{decomposition1form}. 
			\end{proposition}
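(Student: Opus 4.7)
The plan is a strong induction on the $y$-order $k$, combined with a descending induction on the log-degree $p$ at each order, using the projected flow equations \eqref{sigmaprojectionequation} and the algebraic lemmas of Section 2. The base cases $k=\si$ and the immediately subsequent orders are already handled in Propositions \ref{Einstein1}, \ref{34prop}, and \ref{sib1}, which identify $(a^\si_{\si+1}, b^\si_\si, (\phi_y^\si)_{\si+1})$ as the free leading data and determine the next few coefficients from this data.

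For the inductive step, fix $\si$ and $k\geq \si+1$, and assume the claim holds for all coefficients of $y$-order strictly smaller than the one under consideration and, at the same $y$-order, at strictly higher log-degree. Extracting the $\MO(y^{k-1}(\log y)^p)$ coefficients of the three equations in \eqref{sigmaprojectionequation}, and using that $\partial_y$ sends $y^k(\log y)^p$ to $k y^{k-1}(\log y)^p + p y^{k-1}(\log y)^{p-1}$, one obtains an algebraic system of the form
\begin{equation*}
(k+1)a^\si_{k+1,p} - \st[e, a^\si_{k+1,p}] - [(\phi_y^\si)_{k+1,p}, e] = \Theta_{k,p},
\end{equation*}
\begin{equation*}
(k+1)(\phi_y^\si)_{k+1,p} + \Gamma a^\si_{k+1,p} = \Xi_{k,p}, \qquad k\, b^\si_{k,p} + \st[e, b^\si_{k,p}] = \Upsilon_{k,p},
\end{equation*}
where the source terms $\Theta_{k,p}, \Xi_{k,p}, \Upsilon_{k,p}$ consist of $\st d_\om$ and $d_\om^\st$ applied to coefficients of strictly smaller $y$-order, log-shift contributions from index $(k,p+1)$, and the quadratic pieces $\MP_\si(\st[a,b])$, $\MP_\si([\phi_y,b])$, $\MP_\si(\st a\we a-\st b\we b)$, $\MP_\si(\st[a,\st b])$.

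By Proposition \ref{ClebschGordan}, each quadratic contribution decomposes as a sum of products whose $y$-orders sum to $k-1$, and since every factor has $y$-order at least $\si_i$ (respectively $\si_i+1$ for $a$ and $\phi_y$), each individual factor has $y$-order strictly less than $k+1$ and thus falls under the inductive hypothesis. The coupled system for $(a^\si_{k+1,p}, (\phi_y^\si)_{k+1,p})$ is then in the setting of Lemma \ref{vanishinglemma2} with $\lam=k+1$: projecting onto $V^\pm_\si$ reduces by Proposition \ref{actionofL} to multiplication by the nonzero eigenvalues $(k+1)-(\si+1)$ and $(k+1)+\si$ whenever $k\geq \si+1$, and the $V^0_\si$ projection satisfies the condition $\lam\neq 2, -1$ since $k+1\geq \si+2\geq 3$. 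The $b$-equation inverts similarly because $k\cdot\mathrm{id}+L$ has eigenvalues $k-(\si+1), k-1, k+\si$ on $V^\pm_\si, V^0_\si$, all nonzero for $k>\si$. Hence all three coefficients are uniquely expressed in terms of $\Theta_{k,p}, \Xi_{k,p}, \Upsilon_{k,p}$, closing the induction.

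The main obstacle is the combinatorial bookkeeping of the quadratic source terms and the log-degree recursion: one must verify that each product appearing in $\Theta, \Xi, \Upsilon$ at index $(k,p)$ involves only coefficients with strictly smaller $y$-index, and that the descending induction on $p$ at fixed $k$ initiates at a finite maximal log-degree. The latter is guaranteed by the polyhomogeneity Theorem \ref{expansionexists}, which ensures only finitely many log-degrees occur at each $y$-order; the former follows from the leading-order lower bounds in Proposition \ref{sib1} together with the Clebsch--Gordan range of Proposition \ref{ClebschGordan}. Once this bookkeeping is in place, the algebraic invertibility arguments above produce a unique recursive expression for every higher-order coefficient in terms of the leading data $\{a^{\si_i}_{\si_i+1}, b^{\si_i}_{\si_i}, (\phi_y^{\si_i})_{\si_i+1}\}$.
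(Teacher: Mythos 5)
Your proposal is correct and follows essentially the same route as the paper's proof: extract the $\MO(y^{k-1}(\log y)^p)$ coefficients of the projected flow equations, control the quadratic sources via Clebsch--Gordan and the leading-order bounds, and invert the resulting algebraic system using Lemma \ref{vanishinglemma2} for the coupled $(a,\phi_y)$ pair and the spectral decomposition of $L$ for $b$, with induction on $k$ and descending induction on $p$. One small correction: by Proposition \ref{actionofL} the operator $k\,\mathrm{id}+L$ acting on $b^{\si}_{k,p}$ has eigenvalues $k+\si+1$, $k+1$, $k-\si$ on $V^-_\si$, $V^0_\si$, $V^+_\si$, not $k-(\si+1)$, $k-1$, $k+\si$ as you wrote --- with your signs the first eigenvalue would vanish at $k=\si+1$, the very first case of the induction, whereas with the correct signs all three are positive for $k\geq\si+1$, so the inversion goes through as claimed.
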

			\proof Using \eqref{sigma=1projectionequation}, \eqref{sigmaprojectionequation}, consider the $\MO(y^k(\log y)^p)$ and $\MO(y^{k-1}(\log y)^p)$ order coefficients, we obtain
			\begin{equation}
			\begin{split}
			&(k+1)a^{\si}_{k+1,p}-\st[e,a^{\si}_{k+1,p}]
			=\st d_\om b^{\si}_{k,p}-(p+1)a^{\si}_{k+1,p+1}+[(\phi_y^\si)_{k+1,p},e]\\
			&+\MP_{\si}\sum_{k_1+k_2=k,\;p_1+p_2=p}(\st[a_{k_1,p_1},b_{k_2,p_2}]+[(\phi_y)_{k_1,p_1},b_{k_2,p_2}]),\\
			&kb^{\si}_{k,p}+\st[e,b^{\si}_{k,p}]=\st d_{\om}a^{\si}_{k-1,p}+d_\om (\phi_y^\si)_{k-1,p}-(p+1)b^{\si}_{k,p+1}\\
			&+\MP_{\si}\sum_{k_1+k_2=k,\;p_1+p_2=p}\st(a_{k_1,p_1}\we a_{k_2,p_2}-b_{k_1,p_1}\we b_{k_2,p_2}+[a_{k_1,p_1},(\phi_y)_{k_2,p_2}]),\\
			&(k+1)(\phi_y^\si)_{k+1,p}=-(p+1)(\phi^\si_y)_{k+1,p+1}+(d_\om^\st b^\si)_{k,p}-\st[a^\si_{k+1,p},\st e]\\
			&-\MP_\si\sum_{k_1+k_2=k,\;p_1+p_2=p}(\st[a_{k_1,p_1},\st b_{k_2,p_2}])
			\label{formalexpression}.
			\end{split}
			\end{equation}
			We can write the left hand side of the first equation as $-\ML^{\si}_{-(k+1)}(a^{\si}_{k+1,p})$ and the left hand side of the second equation as $\ML_{k}^{\si}(b^{\si}_{k,p})$.
			
			When $k\geq \si+1$, by Corollary \ref{isom} and Lemma \ref{vanishinglemma2}, we see $a^{\si}_{k+1,p}$, $b^{\si}_{k,p}$ and $(\phi_y^\si)_{k+1,p}$ are uniquely determined by an algebraic combination of $a_{k+1,p+1}^{\si}$, $(\phi_y^\si)_{k+1,p+1}$ and lower order terms. Inducting on $k$ and $p$, we can complete the proof.  
			\qed
			
			For the $\log y$ terms appear in the expansions, we have the following Proposition:
			\begin{proposition}\label{prop37}
				If $b^1_{1,1}=0$, then the expansions of $a,b$ don't have "$\log y$" terms. To be explicit, for any $p\neq 0$ and any $\si$, we have $a^{\si}_{k,p}=b^{\si}_{k,p}=0$. 
			\end{proposition}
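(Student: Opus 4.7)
The plan is a double induction: outer strong induction on the expansion order $k$, inner downward induction on the log-power $p$. At level $k$ the claim I aim to establish is
\[
a^{\si}_{k+1,p}=b^{\si}_{k,p}=(\phi_y^\si)_{k+1,p}=0
\]
for every irreducible label $\si$ in the decomposition \eqref{decomposition1form} and every $p\geq 1$, assuming the analogous vanishing at every strictly smaller order.

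For the base case I collect the leading-order information. Under the hypothesis $b^{1}_{1,1}=0$, parts (a) and (b) of Proposition \ref{Einstein1} kill every log coefficient of $a^1$ and $\phi_y^1$ at order $y$ as well as $b^1$ at order $y$ for $p\geq 2$; part (c) leaves only $b^1_{1,1}$, which vanishes by hypothesis. The $y^2$ log coefficients for $\si=1$ in Proposition \ref{34prop}(c) are explicit linear functionals of $b^1_{1,1}$ and hence vanish too, while Proposition \ref{34prop}(a,b) already kills the other $y^2$ log coefficients for $\si=1$. Proposition \ref{sib1}(a) unconditionally kills all log coefficients at the leading $y^\si$ and $y^{\si+1}$ orders for every $\si>1$. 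Thus every log coefficient whose order is one of the resonant leading orders vanishes.

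For the inductive step I examine the recursion \eqref{formalexpression} at a fixed order $k$ strictly beyond the leading ones, at a log-power $p\geq 1$, and for fixed $\si$. The quadratic terms pair factors at orders $k_1,k_2<k$ with $p_1+p_2=p\geq 1$, so at least one factor carries a positive log power and vanishes by the outer hypothesis; the cross-level linear sources $\st d_\om a^\si_{k-1,p}$ and $d_\om(\phi_y^\si)_{k-1,p}$ vanish for the same reason; and the ``$p+1$'' terms either vanish by the inner hypothesis or are absent at the largest $p=r$. What remains is a homogeneous linear system on $(a^\si_{k+1,p},b^\si_{k,p},(\phi_y^\si)_{k+1,p})$. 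The equation for $b^\si_{k,p}$ reads $\ML_k^\si(b^\si_{k,p})=0$; by Corollary \ref{isom} this operator is invertible once $k>\si$, forcing $b^\si_{k,p}=0$. Substituting back, the $V^+_\si$ and $V^-_\si$ projections of the $a$-equation give $(a^\si_{k+1,p})^{\pm}=0$ via the spectral data of Proposition \ref{actionofL}, while the coupled $V^0_\si$ subsystem between $(a^\si_{k+1,p})^0$ and $(\phi_y^\si)_{k+1,p}$ matches Lemma \ref{vanishinglemma2} with $\lam=k+1\geq\si+2\geq 3$, forcing both components to vanish.

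The main technical obstacle is the bookkeeping of the resonance values of $k$ at which one of the linear operators develops a nontrivial kernel, namely $k=\si$ for $\ML_k^\si$ and for the $V^-_\si$ piece of the $a$-equation, together with the excluded values $\lam\in\{2,-1\}$ of Lemma \ref{vanishinglemma2} (which, under the constraint $k\geq\si+1$, is only triggered at $\si=1,k=1$). Each of these resonant $k$ coincides with a leading order already handled in the base case, so the outer induction legitimately starts at $k=\si+1$ and never revisits a nontrivial kernel; the recursion then propagates vanishing mechanically to all higher orders, which proves the claim.
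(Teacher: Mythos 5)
Your proof is correct and follows essentially the same route as the paper's: establish the base case from Propositions \ref{Einstein1}, \ref{34prop} and \ref{sib1}, then induct on the order using the recursion \eqref{formalexpression}, with Corollary \ref{isom} killing $b^\si_{k,p}$ and the spectral decomposition of Proposition \ref{actionofL} together with Lemma \ref{vanishinglemma2} killing the coupled $(a^\si_{k+1,p},(\phi_y^\si)_{k+1,p})$ system. Your explicit bookkeeping of the resonant orders (and the observation that they coincide exactly with the leading orders already handled in the base case) is a welcome refinement of the paper's argument, which handles the log powers by taking the largest non-vanishing $r$ rather than by your downward induction on $p$, but the substance is identical.
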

			\proof By Proposition \ref{34prop}, if $b_{1,1}^1=0$, we obtain $a_{2,1}^1=(\phi_y^1)_{2,1}=0$. Combing this with Proposition \ref{sib1}, for any $\si$, $a^{\si},b^{\si}$ will not contain $"\log y"$ terms in their leading orders of the expansions. We will prove the proposition by induction. Suppose for $k$, for any $\sigma$, the expansions for $a^{\si},b^{\si}$ up to order $\MO(y^{k+\frac12})$ don't contains $"\log y"$ terms. Let $r$ by the largest order that for some $\si$, $a^{\si}_{k+1,r}$ or $b^{\si}_{k+1,r}$ non-vanishing. By \eqref{formalexpression}, as by our assumption $a^{\si}_{k+1,r+1}=b^{\si}_{k+1,r+1}=0$, we obtain 
			\begin{equation}
			\begin{split}
			-\ML^{\si}_{-(k+1)}(a^{\si}_{k+1,r})=&\st d_\om b^{\si}_{k,r}+[(\phi_y^\si)_{k+1,r},e]\\
			&+\MP_{\si}\sum_{k_1+k_2=k,\;p_1+p_2=p}(\st[a_{k_1,p_1},b_{k_2,p_2}]+[(\phi_y)_{k_1,p_1},b_{k_2,p_2}]),\\
			\ML_{k+1}^{\si}(b^{\si}_{k+1,r})=&\st d_{\om}a^{\si}_{k,r}\\
			&+\MP_{\si}\sum_{k_1+k_2=k+1,\;p_1+p_2=r}\st(a_{k_1,p_1}\we a_{k_2,p_2}-b_{k_1,p_1}\we b_{k_2,p_2}+[a_{k_1,p_1},(\phi_y)_{k_2,p_2}])\\
			(k+1)(\phi^\si_y)_{k+1,r}&=d^\si_\om b^\si_{k,r}-\st[a^\si_{k,r},\st e]-\MP_\si\sum_{k_1+k_2=k+1,\;p_1+p_2=r}(\st[a_{k_1,p_1},\st b_{k_2,p_2}])
			\end{split}
			\end{equation}
			By the induction assumption, for $r\neq 0$, $b^{\si}_{k,r}=0,\;a^{\si}_{k,r}=0$ and the quadratic term in the previous equations vanish. By Corollary \ref{isom} and Lemma \ref{vanishinglemma2}, for $r\neq 0$, $a^{\si}_{k+1,r}=b^{\si}_{k+1,r}=(\phi_y^\si)_{k+1,r}=0.$
			\qed
			
			Now, we can complete the proof of the first two theorem in Chapter 1:
			
			\rm{Proof of Theorem 1.1}
			\proof The statement (1) follows from Proposition \ref{Einstein1}, \ref{34prop}, \ref{sib1}. The statement (2) follows from Proposition \ref{prop36}.  
			\qed
			
			\rm{Proof of Theorem 1.2}
			\proof The statement follows from Proposition \ref{34prop} and \ref{prop37}.
			\qed

		\end{subsection}
		
		\begin{subsection}{More Restrictions}
			In this subsection, we will provide a global restriction on the coefficients of the expansions. By Theorem 1.1, we can assume $a,b$ have the expansions
			\begin{equation}
			\begin{split}
			a\sim a_{2,1}y^2\log y +a_2y^2+\cdots,\;b\sim b_{1,1}y\log y+b_1 y+\cdots,\;\phi_y\sim (\phi_y)_{2,1}y^2\log y+(\phi_y)_2y^2
			\end{split}
			\end{equation}
			
			The following identity over closed manifold comes from \cite{KapustinWitten2006}. For any 4-manifold M with 3-manifold boundary X, let $P$ be a principle $SU(2)$ bundle and $\gpp$ its adjoint bundle, $A$ a connection on $P$ and $\Phi$ a $\gpp$-valued 1-form, denote $I_+:=(\SA+\SB)^+$ and $I_-:=(\SA+\SB)^-$, where $+(-)$ means the (anti-)self-dual parts of the two form over the 4-manifold $M$. We obtain:
			\begin{proposition}
				\begin{equation}
				\begin{split}
				\int_M \Tr(I_+^2+I_-^2)=\int_M \Tr(F_A\we F_A)+\int_X\Tr(\Phi\we d_A\Phi)
				\end{split}
				\end{equation}
			\end{proposition}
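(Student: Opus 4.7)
The plan is to reduce the left-hand side to $\int_M\Tr(I\wedge I)$ with $I=\SA+\SB$, then expand and apply integration by parts. For the first reduction: because self-dual and anti-self-dual 2-forms are pointwise orthogonal under wedge (the cross term $I_+\wedge I_-$ vanishes on each fiber), one has $\Tr(I_+\wedge I_+)+\Tr(I_-\wedge I_-)=\Tr(I\wedge I)$ as scalar 4-forms, so it suffices to analyze $\int_M\Tr(I\wedge I)$.

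Next, expand
$$\Tr(I\wedge I)=\Tr(\SA\wedge\SA)+2\Tr(\SA\wedge\SB)+\Tr(\SB\wedge\SB).$$
The quartic contribution $\Tr((\Phi\wedge\Phi)\wedge(\Phi\wedge\Phi))=\Tr(\Phi^{\wedge 4})$ vanishes by graded cyclicity of the trace applied to an odd-degree form. The remaining bulk cross terms are exact modulo the boundary. The central tool is the Ad-invariance formula $d\Tr=\Tr\circ d_A$, combined with the Bianchi identity $d_AF_A=0$ and the curvature relation $d_A^2=[F_A,\cdot]$, which yield the three primitives
\begin{align*}
d\Tr(\Phi\wedge d_A\Phi)&=\Tr(d_A\Phi\wedge d_A\Phi)+2\Tr(F_A\wedge\Phi\wedge\Phi),\\
d\Tr(F_A\wedge\Phi)&=\Tr(F_A\wedge d_A\Phi),\\
d\Tr(\Phi^{\wedge 3})&=3\Tr(\Phi\wedge\Phi\wedge d_A\Phi).
\end{align*}
Applying Stokes' theorem on $M$ then converts each bulk cross term into a boundary integral over $X$; the Chern-Weil/Pontrjagin term $\Tr(F_A\wedge F_A)$ remains in the bulk, while the assorted boundary contributions reassemble into $\int_X\Tr(\Phi\wedge d_A\Phi)$.

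The main obstacle is the sign bookkeeping: matrix-valued differential forms require repeated application of the graded cyclic trace identity $\Tr(\alpha\wedge\beta)=(-1)^{|\alpha||\beta|}\Tr(\beta\wedge\alpha)$, so each cross term must be rewritten as $d$ of a well-chosen primitive with the correct sign before Stokes can be invoked, and one must verify that the undesired bulk pieces involving $\Tr(F_A\wedge\Phi\wedge\Phi)$ fully cancel across the three identities above. A cleaner conceptual framework uses the complexified connection $\mathcal{A}=A+i\Phi$ with curvature $\mathcal{F}_{\mathcal{A}}=\SA+i\SB$: the Chern-Simons identities $\Tr(\mathcal{F}_{\mathcal{A}}\wedge\mathcal{F}_{\mathcal{A}})=d\CS(\mathcal{A})$ and $\Tr(F_A\wedge F_A)=d\CS(A)$ reduce the two characteristic pieces to boundary expressions, and separating their real and imaginary parts organizes the cancellations that produce the claimed identity.
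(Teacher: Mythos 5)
Your overall strategy — expand the square, kill the quartic term by graded cyclicity of the trace, and convert the remaining non-topological bulk terms into a boundary integral via $d\Tr(\Phi\we d_A\Phi)=\Tr(d_A\Phi\we d_A\Phi)+2\Tr(F_A\we\Phi\we\Phi)$ and Stokes — is the same as the paper's, but the expansion step is wrong, and the error is not mere sign bookkeeping. The identity requires
\begin{equation*}
\Tr(I_+^2+I_-^2)=\Tr\left((F_A-\Phi\we\Phi)\we(F_A-\Phi\we\Phi)\right)-\Tr(d_A\Phi\we d_A\Phi),
\end{equation*}
with \emph{no} cross term and a \emph{minus} sign on $\Tr(d_A\Phi\we d_A\Phi)$; this is what the paper writes as $\Tr(F_A^2+\Phi^4-2F_A\we\Phi^2-d_A\Phi\we d_A\Phi)$, and it is exactly what makes everything other than $\Tr(F_A\we F_A)$ equal to $-d\Tr(\Phi\we d_A\Phi)$, a pure boundary contribution. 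Your reduction to $\Tr(I\we I)$ with the real combination $I=F_A-\Phi\we\Phi+d_A\Phi$ instead produces $+\Tr(d_A\Phi\we d_A\Phi)$ together with the cross term $2\Tr((F_A-\Phi\we\Phi)\we d_A\Phi)$. Feeding this through your three primitives leaves the genuinely non-exact bulk term $-4\int_M\Tr(F_A\we\Phi\we\Phi)$ as well as the extra boundary terms $2\int_X\Tr(F_A\we\Phi)-\frac23\int_X\Tr(\Phi\we\Phi\we\Phi)$; these neither cancel nor ``reassemble into $\int_X\Tr(\Phi\we d_A\Phi)$''. The final assertion of your argument is therefore false as stated, and no amount of sign care will rescue it from that starting point.

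The repair is the one you gesture at in your closing sentence but do not carry out: the objects being squared are the self-dual and anti-self-dual parts of the curvature $\mathcal{F}=F_A-\Phi\we\Phi+i\,d_A\Phi$ of the complex connection $A+i\Phi$ (equivalently, one must account for the fact that the Kapustin--Witten equation at $t=1$ pairs $+d_A\Phi$ with the self-dual part and $-d_A\Phi$ with the anti-self-dual part). With that reading, $\Tr(I_+^2+I_-^2)=\Tr(\mathcal{F}\we\mathcal{F})$ by the same vanishing of the mixed term you invoke, its real part is precisely $\Tr((F_A-\Phi\we\Phi)^{\we 2})-\Tr(d_A\Phi\we d_A\Phi)$, and then $\Tr(\Phi^{\we 4})=0$ plus the single primitive $d\Tr(\Phi\we d_A\Phi)$ finish the proof; the imaginary part $2\Tr((F_A-\Phi\we\Phi)\we d_A\Phi)=d\Tr((F_A-\frac13\Phi\we\Phi)\we\Phi)$ is exact and contributes only the imaginary Chern--Simons boundary term, which is not part of the claimed identity. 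You should make this interpretation of $I_\pm$ explicit rather than treating $I$ as the single real two-form $F_A-\Phi\we\Phi+d_A\Phi$.
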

			\begin{proof}
				We compute
				\begin{equation*}
				\begin{split}
				\int_M \Tr(I_+^2+I_-^2)=\int_M\Tr(F_A^2+(\Ph)^4-2F_A\we(\Ph)^2-d_A\Ph\we d_A\Ph).
				\end{split}
				\end{equation*}
				Integrating by parts, we obtain $\int _M\Tr(2F_A\we\Ph^2+d_A\Ph\we d_A\Ph)=\int_X\Tr(\Ph\we d_A\Phi)$. In addition, $\int_M\Tr(\Ph^4)=0$.
			\end{proof}
			
			Denote $M=Y\ti\mathbb{R}^+$, consider $(A,\Ph)$ a Nahm pole solution and convergence $C^{\infty}$ to a flat $G^{\mathbb{C}}$ connection at $y\to\infty$. Let $X_{y}:=X\ti\{y\}\subset X\ti\mathbb{R}^+$. By previous identity, we obtain the following:
			\begin{equation}
			\lim_{y\rightarrow 0}\int_{X_y}\Tr(\Phi\we d_A\Ph)-\int_{X_{\infty}}\Tr(\Phi\we d_A\Ph)+\int_M\Tr(F_A\we F_A)=0.
			\label{Is}
			\end{equation}
			
			In addition, recall the Chern-Simons functional of a connection $A$ over 3-manifold $X$ is $\CS(A):=\int_X \Tr (A\we dA+\frac{2}{3}A\we A\we A)$ and it satisfies $
			\int_M\Tr(F_A\we F_A)=\int_M d \CS(A)=\CS(A|_{X_0})-\CS(A|_{X_{+\infty}}).$
			
			By Proposition \ref{Levi-Civita} and the assumption that $(A,\Phi)$ convergence to $G^{\mathbb{C}}$ flat connection at $y=+\infty$, we obtain that $\int_M\Tr(F_A\we F_A)$ is determined by the Levi-Civita connection and the limit flat connection. In particular, it is bounded and we denote $k:=-\int_M\Tr(F_A\we F_A)+\int_{X_\infty}\Tr(\Phi\we d_A\Ph).$ Combing this with \eqref{Is}, we have the following relationship:
			\begin{equation}
			k=\lim_{y\rightarrow 0}\int_{X_y}\Tr(\Ph\we d_A\Ph),
			\label{Is2}
			\end{equation}
			where $k$ is a finite number.
			
			We obtain the following proposition:
			\begin{proposition}
				For $(A=\om+a,\Phi=y^{-1}e+b)$, a \pol solution with expansions as in \rm{(\ref{poly})}, we have
				\begin{equation}
				\lim_{y\rightarrow 0}\int_{X_y}\Tr(\Phi\we d_A\Ph)=-\lim_{y\rightarrow 0}\int_{X_y}\Tr(e\we \st y^{-1}\partial_y a).
				\label{a1}
				\end{equation}
			\end{proposition}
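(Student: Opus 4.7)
The plan is to reduce $\int_{X_y}\Tr(\Phi\we d_A\Phi)$ to an integral of purely horizontal pieces on the 3-dimensional slice $X_y$, then to use the first flow equation in \eqref{se} to convert those pieces into the expression $\Tr(e\we\st y^{-1}\partial_y a)$ on the right-hand side.

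\textbf{Step 1 (restriction to the slice).} First I observe that any term of $\Phi\we d_A\Phi$ containing a $dy$ factor pulls back to zero on $X_y$. Writing $\phi:=y^{-1}e+b$ for the $X$-component of $\Phi$ and $d_A^X$ for the covariant exterior derivative along $X$, expanding $d_A\Phi=d_A^X\phi+dy\we(\partial_y\phi-d_A^X\phi_y)$ shows $\Phi\we d_A\Phi\big|_{X_y}=\phi\we d_A^X\phi$. Using $d_A^X e=[a,e]$ (from $d_\om e=0$ in Proposition \ref{Levi-Civita}) and the Killing-form cyclicity identity $\Tr(b\we[a,e])=\Tr(e\we[a,b])$, this unpacks to
\begin{equation*}
\Tr(\phi\we d_A^X\phi)=y^{-2}\Tr(e\we[a,e])+y^{-1}\Tr(e\we d_\om b)+2y^{-1}\Tr(e\we[a,b])+\Tr(b\we d_A b).
\end{equation*}

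\textbf{Step 2 (rewriting the RHS via the flow equation).} Applying $\st$ to the first equation of \eqref{se} and multiplying by $y^{-1}$ gives
\begin{equation*}
\st y^{-1}\partial_y a=y^{-1}d_\om b+y^{-2}[a,e]+y^{-2}\st[\phi_y,e]+y^{-1}[a,b]+y^{-1}\st[\phi_y,b].
\end{equation*}
Wedging with $e$ and tracing presents $\Tr(e\we\st y^{-1}\partial_y a)$ in a form directly comparable to the expansion of Step~1.

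\textbf{Step 3 (cancellations and passage to the limit).} Two cancellations are decisive. Integration by parts on the closed $X$ combined with $d_\om e=0$ yields $\int_{X_y}\Tr(e\we d_\om b)=0$, eliminating the $y^{-1}$ contribution common to both sides. Next, by Propositions \ref{actionofL} and \ref{identity} the commutator $[\phi_y,e]$ lies in $V^0$ while $e$ lies in $V^-_1$; orthogonality of these subspaces under the pointwise Killing-form inner product forces $\Tr(e\we\st[\phi_y,e])=0$. All remaining $y^{-1}$-quadratic terms and $\Tr(b\we d_A b)$ are $\MO(y(\log y)^2)$ or smaller in view of the leading expansions $a\sim\MO(y^2\log y)$, $b\sim\MO(y\log y)$, $\phi_y\sim\MO(y^2\log y)$ of Theorem \ref{expansionexists} refined by Propositions \ref{Einstein1}, \ref{34prop}, \ref{sib1}, and so drop out in the limit. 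The sole surviving piece is $y^{-2}\int_{X_y}\Tr(e\we[a,e])$, which appears symmetrically on both sides; its $\log y$-divergence is killed because $(a^1_{2,1})^-=0$ from Proposition \ref{34prop}(c), so via the identity $\Tr(e\we[a,e])=2\Tr(\st e\we a)$ the pairing becomes the $L^2$ inner product of $e\in V^-_1$ with $a_{2,1}^1\in V^0_1\oplus V^+_1$, which vanishes.

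\textbf{Main obstacle.} The central difficulty is the careful sign and coefficient bookkeeping in Step~3: both sides carry formally divergent $y^{-2}\log y$ pieces, so one must use the representation-theoretic decomposition $V_\sigma=V^-_\sigma\oplus V^0_\sigma\oplus V^+_\sigma$ together with the algebraic vanishings recorded in Propositions \ref{Einstein1} and \ref{34prop} to verify that these divergent pieces cancel in exactly the right way, and that the remaining finite contributions on the two sides match with the stated sign.
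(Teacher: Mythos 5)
Your route---restricting to the slice to get $\Tr(\phi\we d_A^X\phi)$ and then matching against the first flow equation of \eqref{se}---is genuinely different from the paper's proof, which substitutes $d_A\Phi=\sta(F_A-\Phi\we\Phi)$ directly from \eqref{KW}, kills the cubic term with the pointwise identity $\Tr(e\we\st[e,c])=0$, and observes that only the $dy\we\py a$ component of $F_A$ survives the restriction of $\sta F_A$ to $X_y$, so that $\int_{X_y}\Tr(\Phi\we\sta F_A)=-\int_{X_y}\Tr(y^{-1}e\we\st\py a)+o(1)$ in one step. Your approach could in principle work, but Step 3 contains a genuine error. In your bookkeeping the term $y^{-2}\Tr(e\we[a\we e])$ enters the left side with a plus sign (from $y^{-1}e\we d_A(y^{-1}e)$) and the right side with a minus sign (from $y^{-1}\st[a,e]$ in \eqref{se} together with the overall minus in $-\Tr(e\we\st y^{-1}\py a)$); it does not ``appear symmetrically on both sides,'' so your identity forces $\lim_{y\to0}y^{-2}\int_{X_y}\Tr(e\we[a\we e])=0$. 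You verify only that the $\log y$-divergent coefficient vanishes (correctly, via $(a^1_{2,1})^-=0$ and orthogonality of the $L$-eigenspaces). The $\MO(1)$ coefficient is $2\int_{X_0}\Tr(e\we\st a_2)$, and $a_2$ carries an undetermined $V^-$ component $c^-$ by Proposition \ref{34prop}(d); Corollary \ref{GR}(2) identifies precisely this integral with the instanton number $k$, which is generically nonzero (and is visibly nonzero for the $S^3$ solution of Example \ref{NahmpoleS3}, where $a_2=-\tfrac23\om$ and $e=\om$). So the quantity you need to discard does not vanish, and Step 3 does not close.

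What your computation actually exposes is a sign tension: carried out consistently, Steps 1--2 give term-by-term agreement of $\int_{X_y}\Tr(\Phi\we d_A\Phi)$ with $+\int_{X_y}\Tr(e\we\st y^{-1}\py a)$ up to the quadratic terms you correctly estimate as $o(1)$ and the term $y^{-1}\int\Tr(e\we d_\om b)$, which vanishes identically by Stokes and $d_\om e=0$. That plus-sign version is also the one consistent with the way Corollary \ref{GR}(2) is stated and used. To prove the proposition with the sign as written, you would instead need $\int_{X_0}\Tr(e\we\st a_2)=0$, which is false; so either a compensating sign must be located in the orientation conventions for $\sta(dy\we\cdot)$ and in $\st d_A\Phi=-(F_A-\Phi\we\Phi)$, or the surviving term must be acknowledged as equal to $k$ rather than zero. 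Either way, the vanishing claim at the end of your Step 3 is not a correct resolution of the obstacle you yourself identified.
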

			\proof As $\Phi$ doesn't have $dy$-component, under the temporal gauge, we compute:
			\begin{equation*}
			\begin{split}
			\int_{X_y}\Tr(\Ph\we d_A\Ph)&=\int_{X_y}\Tr(\Ph\we\sta(F_A-\Ph\we\Ph))\\
			&=-\int_{X_y}\Tr(\Ph\we \st_4(\Ph^2))+\int_{X_y}\Tr(\Ph\we\sta (F_A)).
			\end{split}
			\end{equation*}
			where $\sta$ is the 4-dimensional Hodge star of $M$. 
			
			For the first term, when $y$ small, as $\phi_y\sim \MO(y^2\log y)$, we compute 
			\begin{equation}
			\begin{split}
			\int_{X_y}\Tr(\Ph\we \st_4(\Ph^2))&=\int_{X_y}\Tr(\phi\we \st_4(\Ph^2))+\int_{X_y}\Tr(\phi_ydy\we \st_4 (\Ph^2))\\
			&=\int_{X_y}\Tr(\phi\we \st[\phi,\phi_y])+o(1)\\
			&=\log y\int_{X_y}\Tr(e\we \st[e,(\phi_y)_{2,1}])+\int \Tr(e\we \st[e,(\phi_y)_2])+o(1)
			\end{split}
			\end{equation}
			Using the identity $\st e=e\we e$ over $X$, for any 0-form $c$, we have $$\Tr(e\we\st[e,c])=\Tr(e^3 c-e\we c\we e^2)=0.$$ Thus, $\lim_{y\to 0}\int_{X_y}\Tr(\Ph\we \st_4(\Ph^2))=0.$ 
			
			For the other term, we have
			\begin{equation*}
			\begin{split}
			\int_{X_y}\Tr(\Ph\we\st_4F_A)&=\int_{X_y}\Tr(\Ph\we\sta(d_\om a))\\
			&=-\int_{X_y}\Tr(\Ph\we\st(\py a))\\ 
			&=-\int_{X_y}\Tr(y^{-1}e\we\st(\py a)+b\we\st(\py a))\\
			&=-\int_{X_y}\Tr(y^{-1}e\we\st(\py a))+o(1).
			\end{split}
			\end{equation*}
			The last equality is because $b\sim \MO(y(\log y)^p)$ and $\pa_ya\in \MO((\log y)^p)$ for some $p$.
			\qed
			
			We have the following corollary, which build up the relationship between the expansions of a Nahm pole solution and the instanton number:
			\begin{corollary}
				For the \pol solutions $(A,\Phi)$, we obtain: 
				
				(1) $\int_{X_0}\Tr(e\wedge \st a_{2,1})=0$,
				
				(2) $k=2\int_{X_0}\Tr(e\we \st a_2)$, where $X_0$ is $X\times \{0\}\subset \YI.$
				\label{GR}
			\end{corollary}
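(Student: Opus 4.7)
The plan is to feed the polyhomogeneous expansion of $a$ directly into the identity
\[
k \;=\; \lim_{y\to 0}\int_{X_y}\Tr(\Phi\we d_A\Phi) \;=\; -\lim_{y\to 0}\int_{X_y}\Tr\!\bigl(e\we \st\, y^{-1}\partial_y a\bigr),
\]
which is the combination of \eqref{Is2} and the formula \eqref{a1} established in the previous proposition. Both parts of the corollary should then drop out by comparing powers of $\log y$ on the right hand side: the $\log y$-divergent part must cancel because the left hand side $k$ is a finite number, and the constant part produces the value of $k$.

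First I would substitute the leading expansion $a\sim a_{2,1}y^2\log y + a_2 y^2 + \MO(y^{5/2})$ coming from Theorem 1.1 (part (1), case $\sigma=1$, extended to all $\sigma$ by Proposition \ref{sib1}, since for $\sigma\geq 2$ the leading power of $a^\sigma$ is $y^{\sigma+1}$ with $\sigma\geq 2$, so those components contribute only at order $y^{3}$ or higher and disappear in the limit). Differentiating gives
\[
y^{-1}\partial_y a \;\sim\; (2\log y + 1)\, a_{2,1} + 2\, a_2 + \MO(y^{1/2}\log y),
\]
so that
\[
-\int_{X_y}\Tr(e\we \st y^{-1}\partial_y a) \;\sim\; -2\log y\int_{X_y}\Tr(e\we \st a_{2,1}) - \int_{X_y}\Tr(e\we \st a_{2,1}) - 2\int_{X_y}\Tr(e\we \st a_2) + o(1).
\]
The error term vanishes as $y\to 0$ because the next terms in the expansion are of order $y^{1/2}(\log y)^p$, which integrate to zero in the limit, and because $a_{2,1}, a_2$ are smooth in $x$ so the integrals on $X_y$ converge to the corresponding integrals on $X_0$.

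Now I would extract (1) and (2) by matching orders in $y$. Since the left hand side $k$ is finite and independent of $y$, the coefficient of the divergent $\log y$ term on the right must vanish:
\[
\int_{X_0}\Tr(e\we \st a_{2,1}) \;=\; 0,
\]
which is exactly statement (1). Once this is established, taking $y\to 0$ in the remaining finite part yields
\[
k \;=\; -\int_{X_0}\Tr(e\we \st a_{2,1}) - 2\int_{X_0}\Tr(e\we \st a_2) \;=\; -2\int_{X_0}\Tr(e\we \st a_2),
\]
giving (2) (up to the overall sign convention used to define $k$ in \eqref{Is2}).

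The main obstacle I anticipate is controlling the tail of the expansion to make sure nothing else contributes a $\log y$ or constant term at the limit. Specifically, I need to verify that all $a^\sigma_{k,p}$ with $\sigma\geq 2$ (leading at $y^{\sigma+1}$, $\sigma\geq 2$) and all higher-order $a^1_{k,p}$ with $k\geq 3$ contribute only $o(1)$ to $y^{-1}\int_{X_y}\Tr(e\we \st \partial_y a)$. This is where the polyhomogeneity statement in Theorem \ref{expansionexists} is essential: because the asymptotic expansion can be differentiated term by term with uniform remainder bounds, the $\MO(y^{1/2}(\log y)^q)$ remainder in $a$ produces an $\MO(y^{-1/2}(\log y)^q)$ remainder in $y^{-1}\partial_y a$, which after integration against $e\we \st(\cdot)$ still tends to $0$. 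Once this bookkeeping is in place, the two conclusions follow immediately.
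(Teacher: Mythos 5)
Your proof is correct and follows essentially the same route as the paper: substitute the polyhomogeneous expansion of $a$ into $k=-\lim_{y\to 0}\int_{X_y}\Tr(e\wedge\st\, y^{-1}\partial_y a)$, conclude that the coefficient of the divergent $\log y$ term must vanish because $k$ is finite (giving (1)), and read off $k$ from the constant term (giving (2)). The only discrepancy is the overall sign in (2) --- you obtain $k=-2\int_{X_0}\Tr(e\wedge\st a_2)$ versus the stated $+2$ --- which you correctly flag as a convention issue and which the paper's own proof does not track carefully either.
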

			\proof
			By previous computation, the non-vanishing terms of $\Tr (y^{-1} e\we \st \partial_y a)$ will be
			\begin{equation*}
			\Tr (2\log y e\we \st a_{2,1}+e\we\st a_{2,1}+2e\we \st a_2).
			\end{equation*}
			
			By the \pol assumption, we know $a_{2,p}\in\MC^{\infty}(Y)$. Combine this with (\ref{a1}), we know all singular terms should vanish. Thus $\int_{X_0}\Tr(e\wedge \st a_{2,1})=0$ The only remaining term that contributes to the integral is $\int_{X_0}\Tr (e\we\st a_2)$ which verifies the statement (2).
			\qed
			
		\end{subsection}
	\end{section}

	\begin{section}{The Expansions When $G=SO(3)$ or $SU(2)$}
		\begin{subsection}{Formula Expansions}
			In this section, we will determine all the rest terms in the expansion of a Nahm pole solution when $G=SU(2)$ or $SO(3)$. When $G=SU(2)$ or $SO(3)$, in the notation of \eqref{decomposition1form}, we only have $\si=1$ and $\tau_1$ is the only irreducible module. Proposition \ref{Einstein1}, \ref{34prop} still works for this case. We will give a proof Theorem 1.4 by induction.
			
			For $k\geq 1$, suppose $(a,b,\phi_y)$ satisfies \eqref{se} and has the following expansions:
			\begin{equation}
			\begin{split}
			a&\sim \sum_{i=1}^{k}\sum_{p=0}^ia_{2i,p}y^{2i}(\log y)^p+\sum_{p=0}^{r_{2k+1}}a_{2k+1,p}y^{2k+1}(\log y)^p+\cdots,\\
			b&\sim \sum_{i=1}^{k}\sum_{p=0}^i b_{2i-1,p}y^{2i-1}(\log y)^p+\sum_{p=0}^{r_{2k+1}}b_{2k+1,p}y^{2k+1}(\log y)^p+\cdots,\\
			\phi_y&\sim \sum_{i=1}^{k}\sum_{p=0}^i(\phi_y)_{2i,p}y^{2i}(\log y)^p+\sum_{p=0}^{r_{2k+1}}(\phi_y)_{2k+1,p}y^{2k+1}(\log y)^p+\cdots,
			\end{split}
			\label{oddexpan}
			\end{equation}
			where "$\cdots$" means the higher order terms. In addition, we denote 
			\begin{equation}
			\begin{split}
			&\MA_{2k}:=\sum_{i=1}^{k}\sum_{p=0}^ia_{2i,p}y^{2i}(\log y)^p,\;\MB_{2k}:=\sum_{i=1}^{k}\sum_{p=0}^i b_{2i-1,p}y^{2i-1}(\log y)^p,\\&\MC_{2k}:=\sum_{i=1}^{k}\sum_{p=0}^i(\phi_y)_{2i,p}y^{2i}(\log y)^p,
			\end{split}
			\end{equation}
			
			be terms in the expansions of $a$ and $b$ which vanish slower than $\MO(y^{2k+\frac 12})$. 
			
			Recall for any 1-form $\al$, we define $\ML_{2k+1}(\al)=(2k+1)\al+\st[e,\al]$ and obtain:
			
			\begin{proposition}
				\label{inducationeven}
				Assume $a,b$ have the expansions in \eqref{oddexpan}, let $p,s$ be non-negative integers, we have:
				
				(1)For any $p$, $a_{2k+1,p}=(\phi_y)_{2k+1,p}=0$ and for integer $s>k+1$, $b_{2k+1,s}=0$,
				
				(2)For $k+1\geq s\geq 0$, $b_{2k+1,s}=\ML_{2k+1}^{-1}(\Theta^s_{2k+1}-(s+1)b_{2k+1,s+1})$, where $\Theta^{s}_{2k+1}$ depends on $\MA_{2k}$ and $\MB_{2k}$. If $\MA_{2k}$ and $\MB_{2k}$ don't have log terms, for $s\geq 1$, we obtain $b_{2k+1,s}=0$.
			\end{proposition}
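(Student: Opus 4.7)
My plan is to extract the coefficient of $y^{2k}(\log y)^p$ from each of the three equations in \eqref{se} and exploit the even/odd parity enforced by \eqref{oddexpan}: within $\MA_{2k},\MB_{2k},\MC_{2k}$ all $a$- and $\phi_y$-terms live at even $y$-orders and all $b$-terms at odd $y$-orders. In the first and third equations, every quadratic pairs an $a$- or $\phi_y$-factor with a $b$-factor, producing only odd $y$-orders; and a ``new'' factor at order $2k+1$ cannot pair with anything (its partner would need to sit at order $-1$). Hence all quadratic contributions at order $y^{2k}$ vanish, as does the linear term $\st d_\om b_{2k,p}$ (since $b_{2k,p}=0$ by the inductive hypothesis). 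The extracted coefficient equations then decouple from $b$ into
\begin{equation*}
\begin{split}
(2k+1)a_{2k+1,p}+\st[e,a_{2k+1,p}]+[e,(\phi_y)_{2k+1,p}]&=-(p+1)a_{2k+1,p+1},\\
(2k+1)(\phi_y)_{2k+1,p}+\Gamma a_{2k+1,p}&=-(p+1)(\phi_y)_{2k+1,p+1}.
\end{split}
\end{equation*}
Running reverse induction on $p$, the top case has no $(p+1)$-correction; the $V^\pm_1$ projections of the first equation kill those components through the nonzero eigenvalues $2k+3$ and $2k$ of $\ML_{2k+1}$, and on $V^0_1$ applying $[e,\cdot]$ to the second equation together with Proposition \ref{identity} gives $\tfrac{2k(2k+3)}{2k+1}(a_{2k+1,p})^0=0$, forcing $a_{2k+1,p}=(\phi_y)_{2k+1,p}=0$. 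This establishes the first two vanishing assertions of (1).

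For the second equation of \eqref{se}, the quadratic pieces $\st a\we a$, $-\st b\we b$, $[a,\phi_y]$ preserve parity (even$+$even, odd$+$odd) and so contribute at order $y^{2k}$, but only through pairs of factors both lying in $\MA_{2k}\cup\MB_{2k}\cup\MC_{2k}$. Bundling these with $\st d_\om a_{2k,s}+d_\om(\phi_y)_{2k,s}$ into $\Theta^s_{2k+1}$ produces the equation $\ML_{2k+1}(b_{2k+1,s})=\Theta^s_{2k+1}-(s+1)b_{2k+1,s+1}$. For $k\geq 1$, $\ML_{2k+1}$ has eigenvalues $2k+3,2k+2,2k$ on $V^-_1,V^0_1,V^+_1$, all nonzero, so it is invertible (Corollary \ref{isom}) and formula (2) follows.

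The remaining vanishing claims are a bookkeeping of $\log y$ powers. The inductive assumption bounds the log-exponent by the $y$-order ($p\leq i$) for each nonzero entry of $\MA_{2k},\MB_{2k},\MC_{2k}$. A $b\we b$ pair contributing to $\Theta^s_{2k+1}$ at $y^{2k}(\log y)^s$ satisfies $j_1+j_2=k+1$ and $q_1+q_2=s\leq k+1$, while the $a\we a$ and $[a,\phi_y]$ pairs give the sharper $s\leq k$, and the linear pieces are nonzero only for $s\leq k$. Hence $\Theta^s_{2k+1}=0$ for $s>k+1$, and a reverse induction on $s$ (starting from the largest log-order, where $b_{2k+1,s+1}=0$ trivially) combined with invertibility of $\ML_{2k+1}$ forces $b_{2k+1,s}=0$ for $s>k+1$. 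If $\MA_{2k},\MB_{2k},\MC_{2k}$ contain no $\log y$ terms at all, the same count yields $\Theta^s_{2k+1}=0$ for every $s\geq 1$, and the reverse induction then gives $b_{2k+1,s}=0$ for all $s\geq 1$. The main obstacle I anticipate is the careful tracking of the $(p+1)$-corrections from $\pa_y(\log y)^{p+1}$ alongside the log-power bounds under quadratic multiplication, and verifying that the $y^{-1}$ factors in $y^{-1}\st[a,e]$, $y^{-1}[\phi_y,e]$, $y^{-1}\st[a,\st e]$ feed only the linear left-hand side at order $2k+1$ rather than introducing mixed-parity couplings that would spoil the decoupling of the $a,\phi_y$ system from $b$.
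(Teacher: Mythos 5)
Your argument is correct and follows essentially the same route as the paper's proof: parity of the $y$-orders in $\MA_{2k},\MB_{2k},\MC_{2k}$ kills the quadratic contributions in the $a$- and $\phi_y$-equations at order $y^{2k}$, the $V^\pm$ eigenvalues plus the $[e,\cdot]$/$\Gamma$ computation on $V^0$ (which is exactly the content of Lemma \ref{vanishinglemma2}) force $a_{2k+1,p}=(\phi_y)_{2k+1,p}=0$ by downward induction on the log-power, and invertibility of $\ML_{2k+1}$ together with the log-power count on $\Theta^s_{2k+1}$ gives part (2). Your explicit bookkeeping of $q_1+q_2=s\leq k+1$ for the $b\we b$ pairs just spells out the paper's terser claim that $\Theta^{r_{2k+1}}_{2k+1}=0$ for $r_{2k+1}>k+1$.
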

			\begin{proof}
				Consider the first equation in \eqref{se}, consider the expansion of order $\MO(y^{2k}(\log y)^{r_{2k+1}})$, the quadratic term $\st[a,b]$ will not contribute as $\MA_{2k},\;\MC_{2k}$ only contains even order terms and $\MB_{2k}$ only contains odd order terms. Thus, we obtain 
				\begin{equation}
				\begin{split}
				&(2k+1)a_{2k+1,r_{2k+1}}=\st[e,a_{2k+1,r_{2k+1}}]+[(\phi_y)_{2k+1,r_{2k+1}},e],\\
				&(2k+1)(\phi_y)_{2k+1,r_{2k+1}}=-\Gamma a_{2k+1,r_{2k+1}}. 
				\end{split}
				\end{equation}
				As $k\geq 1$, by Proposition \ref{actionofL} and Lemma \ref{vanishinglemma2}, we obtain $a_{2k+1,r_{2k+1}}=0$. By induction, we obtain $a_{2k+1,p}=0$ for any $p$. 
				
				For the second equations of \eqref{se}, let $s,s_1,s_2$ be non-negative integers, we define
				\begin{equation}
				\begin{split}
				\Theta^{s}_{2k+1}:&=\st d_\om a_{2k,s}+d_\om (\phi_y)_{2k,s}+\sum_{l=1}^k\sum_{s_1+s_2=s} \st a_{2l,s_1} a_{2k-2l,s_2}\\
				&-\sum_{l=1}^k \sum_{s_1+s_2=s}\st b_{2l-1,s_1}b_{2k-2l-1,s_2}+\sum_{l=1}^k\sum_{s_1+s_2=s} \st [a_{2l,s_1},(\phi_y)_{2k-2l,s_2}],
				\end{split}
				\end{equation}
				where $a_{2k,s},\;b_{2k,s}$ are understood as zero if it don't appears in the coefficients of $\MA_{2k}$ and $\MB_{2k}$.
				
				Consider the coefficients of order $\MO(r^{2k}(\log y)^s)$, we obtain 
				\begin{equation}
				(2k+1)b_{2k+1,s}
				=-\st[e,b_{2k+1,s}]+\Theta^{s}_{2k+1}-(s+1)b_{2k+1,s+1}.
				\end{equation}
				
				Thus, as $k\geq 1$, we obtain $b_{2k+1,s}=\ML_{2k+1}^{-1}(\Theta^s_{2k+1}-(s+1)b_{2k+1,s+1})$. Suppose $s=r_{2k+1}$ and $r_{2k+1}>k+1$, then $\Theta_{2k+1}^{r_{2k+1}}=0$. Thus, $b_{2k+1,r_{2k+1}}=0$ for $r_{2k+1}>k+1$. 
				
				If $\MA_{2k},\MB_{2k}$ don't contain $log$ terms and $s\neq 0$, we obtain $\Theta_{2k+1}^{s}=0$. By induction of $s$, this proves the last claim.
			\end{proof}
			
			There is another type of expansions we need to consider:
			for $k\geq 1$, suppose $(a,b)$ has the following expansions
			\begin{equation}
			\begin{split}
			a&\sim \sum_{i=1}^{k}\sum_{p=0}^ia_{2i,p}y^{2i}(\log y)^p+\sum_{p=0}^{r_{2k+2}}a_{2k+2,p}y^{2k+2}(\log y)^p+\cdots,\\
			b&\sim \sum_{i=1}^{k+1}\sum_{p=0}^i b_{2i-1,p}y^{2i-1}(\log y)^p+\sum_{p=0}^{r_{2k+2}}b_{2k+2,p}y^{2k+2}(\log y)^p+\cdots,\\
			\phi_y&\sim \sum_{i=1}^{k}\sum_{p=0}^i(\phi_y)_{2i,p}y^{2i}(\log y)^p+\sum_{p=0}^{r_{2k+2}}(\phi_y)_{2k+2,p}y^{2k+2}(\log y)^p+\cdots,
			\end{split}
			\label{evenexpan}
			\end{equation}
			where "$\cdots$" means the higher order terms. Similarly, we define 
			\begin{equation}
			\begin{split}
			&\MA_{2k+1}:=\sum_{i=1}^{k}\sum_{p=0}^ia_{2i,p}y^{2i}(\log y)^p,\;\MB_{2k+1}:=\sum_{i=1}^{k+1}\sum_{p=0}^i b_{2i-1,p}y^{2i-1}(\log y)^p,\\
			&\MC\textit{}_{2k+1}:=\sum_{i=1}^{k}\sum_{p=0}^i(\phi_y)_{2i,p}y^{2i}(\log y)^p.
			\end{split}
			\end{equation}

			We obtain the following proposition:
			\begin{proposition}
				\label{Inductionodd}
				Assume $a,b,\phi_y$ have the expansions in \eqref{evenexpan}, let $p,s$ be non-negative integers, we have:
				
				(1)For any $p$, $b_{2k+2,p}=0$ and for integer $s\geq k+2$, $a_{2k+2,s}=(\phi_y)_{2k+2,s}=0$,
				
				(2)For $k+1\geq s\geq 0$, we can write
				\begin{equation}
				\begin{split}
				&a^+_{2k+2,s}=\frac{1}{2k+3}((s+1)a^+_{2k+2,s+1}+(\Theta^s_{2k+2})^+),\\
				&a^-_{2k+2,s}=\frac{1}{2k}((2+1)a^-_{2k+2,s+1}+(\Theta^2_{2k+2})^-),\\
				&a^0_{2k+2,s}=\frac{1}{4k^2+6k}((2k+2)((s+1)a_{2k+2,s+1}+\Theta^s_{2k+2}))-[e,\Xi^s_{2k+2}-(s+1)(\phi_y)_{2k+2,s+1}],\\
				&(\phi_y)_{2k+2,s}=\frac{1}{4k^2+6k}((2k+1)(\Xi^s_{2k+2}-(s+1)(\phi_y)_{2k+2,s+1})-\Gamma((s+1)a_{2k+2,s+1}+\Theta^s_{2k+2})),
				\label{complicate}
				\end{split}
				\end{equation}
				$a_{2k+2,s}=-\ML_{-(2k+2)}^{-1}(\Theta^s_{2k+2}-(s+1)a_{2k+2,s+1})$, where $\Theta^{s}_{2k+2},\Xi_{2k+2}$ depend on $\MA_{2k+1}$, $\MB_{2k+1}$ and $\MC_{2k+1}$. If $\MA_{2k+1}$ and $\MB_{2k+1}$ don't have $\log y$ terms, for $s\geq 1$, we obtain $b_{2k+2,s}=0$.
			\end{proposition}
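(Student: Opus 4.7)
The plan is to mirror the proof of Proposition \ref{Inductionodd} with the roles of even and odd orders interchanged. By the induction hypothesis, $\mathcal{A}_{2k+1}, \mathcal{B}_{2k+1}, \mathcal{C}_{2k+1}$ are known; I will read off the coefficients of $\mathcal{O}(y^{2k+1}(\log y)^s)$ in the three flow equations \eqref{se} to determine $b_{2k+2,p}$, $a_{2k+2,p}$ and $(\phi_y)_{2k+2,p}$. The critical observation is a parity count of the quadratic terms: since $a,\phi_y$ have only even-order and $b$ only odd-order known parts, the quadratic expressions $\st a\wedge a$, $\st b\wedge b$, $[a,\phi_y]$ are even-order, while $\st[a,b]$, $[\phi_y,b]$ and $\st[a,\st b]$ are odd-order. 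Thus at order $y^{2k+1}$, the $b$-equation receives no quadratic contribution from known terms, whereas the $a$- and $\phi_y$-equations receive contributions built entirely from $\mathcal{A}_{2k+1}, \mathcal{B}_{2k+1}, \mathcal{C}_{2k+1}$.

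Carrying out this reading for the second equation of \eqref{se} gives
$$\ML_{2k+2}(b_{2k+2,s}) = -(s+1)b_{2k+2,s+1},$$
because $\st d_\om a$ and $d_\om \phi_y$ would require $a$ or $\phi_y$ at order $y^{2k+1}$, which is zero by induction. The eigenvalues of $\ML_{2k+2}$ on $V_1$ are $2k+1, 2k+3, 2k+4$ (all nonzero for $k\geq 1$) by Proposition \ref{actionofL}, so Corollary \ref{isom} lets us invert and conclude $b_{2k+2,p}=0$ for all $p$ by downward induction on $s$, starting from the top log power. For the first and third equations of \eqref{se}, the analogous reading yields the coupled system
$$(2k+2)a_{2k+2,s}-\st[e,a_{2k+2,s}]+[e,(\phi_y)_{2k+2,s}]=\Theta^s_{2k+2}-(s+1)a_{2k+2,s+1},$$
$$(2k+2)(\phi_y)_{2k+2,s}+\Gamma a_{2k+2,s}=\Xi^s_{2k+2}-(s+1)(\phi_y)_{2k+2,s+1},$$
where $\Theta^s_{2k+2}$ and $\Xi^s_{2k+2}$ collect $\st d_\om b_{2k+1,s}$, $d^{\st}_\om b_{2k+1,s}$ together with the quadratic expressions in $\mathcal{A}_{2k+1},\mathcal{B}_{2k+1},\mathcal{C}_{2k+1}$.

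Projecting onto $V^+, V^-, V^0$ solves the system. Since $[e,(\phi_y)_{2k+2,s}]\in V^0$, the $V^\pm$ projections decouple from $\phi_y$, giving the stated formulas with invertible prefactors $2k+3$ (from the eigenvalue $-1$ on $V^+$) and $2k$ (from the eigenvalue $2$ on $V^-$), both nonzero for $k\geq 1$. On $V^0$, the pair $(a^0_{2k+2,s},(\phi_y)_{2k+2,s})$ satisfies precisely the hypothesis of Lemma \ref{vanishinglemma2} with $\lam=2k+2$, so that $\lam^2-\lam-2=4k^2+6k\neq 0$ and the closed formulas of the proposition drop out. This proves part (2); the compact expression $a_{2k+2,s}=-\ML_{-(2k+2)}^{-1}(\Theta^s_{2k+2}-(s+1)a_{2k+2,s+1})$ is then just the uniform rewriting.

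For the log-power bound in part (1), I would show $\Theta^s_{2k+2}=\Xi^s_{2k+2}=0$ for $s\geq k+2$. The linear contributions $\st d_\om b_{2k+1,s}$ and $d^{\st}_\om b_{2k+1,s}$ vanish for $s\geq k+2$ by the bound $s\leq k+1$ from Proposition \ref{inducationeven}(2); the quadratic contributions at order $y^{2k+1}(\log y)^s$ are sums over products $a_{2j,p_1}\wedge b_{2(k-j)+1,p_2}$ and analogues with $p_1+p_2=s$, and by the induction hypothesis $p_1\leq j$ and $p_2\leq k-j+1$, hence $s\leq k+1$. Downward induction on $s$ combined with the invertibilities from Step 4 then forces $a_{2k+2,s}=(\phi_y)_{2k+2,s}=0$ for $s\geq k+2$. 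The final assertion is obtained identically: if $\mathcal{A}_{2k+1},\mathcal{B}_{2k+1}$ are free of log terms, then $\Theta^s_{2k+2}=\Xi^s_{2k+2}=0$ for every $s\geq 1$, and the same downward induction gives the vanishing. The main obstacle is the $V^0$ coupling in Step 4; once it is recognised as an instance of Lemma \ref{vanishinglemma2} with the right $\lam$, the remaining work is careful bookkeeping of signs (in particular that $\st[a,e]=\st[e,a]$ for 1-forms) and of which log powers can appear in each quadratic source term.
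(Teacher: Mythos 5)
Your proposal is correct and follows essentially the same route as the paper: the same parity count showing the quadratic terms drop out of the $b$-equation at odd order $y^{2k+1}$ (giving $\ML_{2k+2}(b_{2k+2,s})=-(s+1)b_{2k+2,s+1}$ and hence $b_{2k+2,p}=0$), the same coupled $(a,\phi_y)$ system with sources $\Theta^s_{2k+2},\Xi^s_{2k+2}$ resolved by projecting to $V^{\pm}$ and applying Lemma \ref{vanishinglemma2} with $\lam=2k+2$ on $V^0$, and the same downward induction on the log power using the bound $s\le k+1$ on the sources. Your bookkeeping of the eigenvalues and of which log powers can occur in the quadratic terms is, if anything, slightly more explicit than the paper's.
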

			\begin{proof}
				Consider the $\MO(y^{2k+1}(\log y)^{r_{2k+2}})$ terms of the second equations of \eqref{se}. As $\MA_{2k+1},\MC_{2k+1}$ only contains even order expansions and $\MB_{2k+1}$ only contains odd order expansions, the quadratic terms doesn't contribute and we obtain $(2k+1)b_{2k+2,r_{2k+2}}+\st [e,b_{2k+2,r_{2k+2}}]$=0, which implies $b_{2k+2,r_{2k+2}}=0$. By induction, we obtain for any $p$, $b_{2k+2,p}=0$.
				
				For a non-negative integer $s$, write
				\begin{equation}
				\begin{split}
				&\Theta_{2k+2}^s:=\sum_{l=1}^k\sum_{s_1+s_2=s}\st[a_{2l,s_1},b_{2k+1-2l,s_2}]+\st[(\phi_y)_{2l,s_1},b_{2k+1-2l,2s}]+\st d_{\om}b_{2k+1}^s\\
				&\Xi_{2k+2}^s:=\sum_{l=1}^k\sum_{s_1+s_2=s}\st[a_{2l,s_1},\st b_{2k+1-2l,s_2}]
				\end{split}
				\end{equation}
				We compute the $\MO(y^{2k+1}(\log y)^{s})$ coefficients and obtain
				\begin{equation}
				\begin{split}
				&(2k+2)a_{2k+2,s}+(s+1) a_{2k+2,s+1}=\Theta^s_{2k+2}+\st[e,a_{2k+2,s}]+[(\phi_y)_{2k+2,s},e],\\
				&(2k+2)(\phi_y)_{2k+2,s}+(s+1)(\phi_y)_{2k+2,s+1}=-\Gamma a_{2k+2,s}+\Xi^s_{2k+2}.
				\end{split}
				\end{equation}
				As $k\geq 1$, applying Lemma \ref{vanishinglemma2}, we obtain \eqref{complicate}.
				
				Suppose $s=r_{2k+1}$ and $r_{2k+1}>k+2$, then $\Theta_{2k+2}^{r_{2k+2}}=\Xi^{r_{2k+2}}_{2k+2}=0$. Also by the assumption of the expansion, we have $a_{2k+2,s+1}=(\phi_y)_{2k+2,s+1}=0$. Thus, $a_{2k+2,r_{2k+2}}=a_{2k+2,r_{2k+2}}=0$ for $r_{2k+2}\geq k+2$. If $\MA_{2k+1},\MB_{2k+1}$ don't contain $log$ terms and $s\neq 0$, we obtain $\Theta_{2k+2}^{s}=0$. By an induction of $s$, this proves the last claim. 
			\end{proof}
			
			Now, we will give a proof for Theorem 1.4:
			
			\rm{Proof of Theorem 1.4}: By Proposition \ref{Einstein1}, \ref{34prop}, \ref{inducationeven}, \ref{Inductionodd}, the result follows immediately.
			\qed
		\end{subsection}

		\begin{subsection}{Examples}
			Now, we will give some examples of the expansions of some Nahm pole solutions to the Kapustin-Witten equations that are related to our theorem:
			\begin{example}{\rm{\cite{he2015rotationally}}}
				\rm{
					Nahm pole solutions on $S^3\times \mathbb{R}^+$.
					\label{NahmpoleS3}
					Equip $S^3$ with the round metric and take $\omega$ be 
					Maurer–Cartan 1-form of $S^3$ and $\om$ satisfies the following relation $d\om=-2\om\we\om$ and $\st\om=\om\we\om$. Denote $y$ the
					coordinate of $\mathbb{R}^+$ and  denote 
					\begin{equation}
					(A,\Phi)=(\frac{6e^{2y}}{e^{4y}+4e^{2y}+1}\omega,\frac{6(e^{2y}+1)e^{2y}}{(e^{4y}+4e^{2y}+1)(e^{2y}-1)}\omega),
					\label{S1}
					\end{equation}
					
					\cite[Theorem 6.2]{he2015rotationally} shows that $(A,\Phi)$ is a Nahm-Pole solution to the Kapustin-Witten equations. In addition, the solutions (\ref{S1}) will converge to the unique flat $\slc$ connection in the cylindrical end of $S^3\times \mathbb{R}^{+}$.}
				
				The expansions of this solution along $y\rightarrow 0$ will be 
				\begin{equation}
				\begin{split}
				A \sim (1-\frac{2}{3}y^2+\frac{2}{9}y^4-\frac{4}{135}y^6+\cdots)\om,\;\Phi\sim (\frac{1}{y}-\frac{1}{3}y-\frac{1}{45}y^3+\frac{58}{945}y^5+\cdots)\om,
				\end{split}
				\end{equation}
				and note that the expansions of $A$ only contain even order terms and the expansions of $\Phi$ only contain odd order terms.
			\end{example} 
			
			\begin{example}{\rm{ \cite{Kronheimer2015Personal}}}
				\rm{Nahm pole solutions on $X\times \mathbb{R}^+$ where $X$ is any hyperbolic three manifold.}
				\label{NahmpoleY3}
				
				Let $X$ be a hyperbolic three manifold equipped with the hyperbolic metric $h$. Consider the associated $PSL(2;\mathbb{C})$ representation of $\pi_1(Y)$, this lifts to $SL(2;\mathbb{C})$ and determines a flat $\slc$ connection $\nabla^{flat}$. Denote by $\nabla^{lc}$ the Levi-Civita connection and by $A^{lc}$ the connection form. Take $i\omega:=\nabla^{flat}-\nabla^{lc}$. Then locally, $\omega=\sum \mathfrak{t_i}e_i^{\star}$ where $\{e_i^{\star}\}$ is an orthogonal basis of $T^{\star}Y$ and $\{\mathfrak{t}_a\}$ are sections of 
				the adjoint bundle $\gpp$ with the 
				relation $[\mathfrak{t}_a,\mathfrak{t
				}_b]=2\epsilon_{abc}\mathfrak{t}_c$. We also have $\star_Y\omega=F_{\nabla^{lc}}$. Therefore, by the Bianchi identity, we obtain $\nabla^{lc}(\star_Y\omega)=0$. Combining $F_{flat}=0$ and the relation $\nabla^{flat}-\nabla^{lc}=i\omega$, we obtain $F_{\nabla^{lc}+i\omega}=0$. Hence $F_{lc}=\omega\wedge\omega$, $\nabla^{lc}\omega=0$.
				
				Take $y$ to be the coordinate of $\mathbb{R}^+$ in $Y^3\times \mathbb{R}^+$, now set $f(y):=\frac{e^{2y}+1}{e^{2y}-1},$ and take $(A,\Phi)=(A^{lc},f(y)\omega)$. We refer \cite[Section 2.3]{He2017} for a record of proof in \cite{Kronheimer2015Personal} that this is a Nahm pole solution to the Kapustin-Witten equations.
				
				As $A^{lc}$ is independent of $y$, the solution has the following expansions:
				\begin{equation}
				\begin{split}
				A\sim A^{lc},\;\Phi\sim (\frac{1}{y}+\frac{1}{3}y-\frac{1}{45}y^3+\frac{2}{945}y^5+\cdots)\om.
				\end{split}
				\end{equation}
			\end{example}
			
			\begin{example}{\rm{\cite{he2017extended,he2018extended}}}\rm{Nahm pole solutions on $\Sigma\times S^1\times \mathbb{R}^+$ where $\Sigma$ is a Riemann surface with genus$>1$}
				In \cite{he2017extended, he2018extended}, it is shown that for $G=SU(n)$, there exists Nahm pole solutions over $\Sigma\times S^1\times \mathbb{R}^+$. Even though it is an existence result and the solution is not explicitly constructed, it still shown in \cite[Section 4.4]{he2017extended} that the solutions contains $\log y$ terms. As $S^1\times \Sigma$ is not an Einstein 3-manifold, it matches our expectations.
			\end{example}
		\end{subsection}
	\end{section}
	\medskip
	
	\bibliographystyle{alpha}
	\bibliography{references}
\end{document}